\documentclass[11pt]{article}
\usepackage{amsfonts}
\usepackage{graphics}
\usepackage{indentfirst}
\usepackage{color}
\usepackage{cite}
\usepackage{latexsym}
\usepackage[paper=a4paper, left=1.6cm, right=1.6cm, top=1.8cm, bottom=1.6cm, headheight=5.5pt, footskip=0.8cm, footnotesep=0.8cm, centering, includefoot]{geometry}
\usepackage{amsmath}
\allowdisplaybreaks
\usepackage{amssymb}
\usepackage[colorlinks, linkcolor=red]{hyperref}
\hypersetup{urlcolor=red, citecolor=red}
\usepackage[dvips]{epsfig}
\usepackage{amscd}

\usepackage{amsthm}
\usepackage{mathrsfs}
\usepackage{verbatim}
\newtheorem{theorem}{Theorem}[section]
\newtheorem{remark}{Remark}[section]
\newtheorem{defn}{Definition}[section]
\newtheorem{lemma}{Lemma}[section]
\newtheorem{corollary}{Corollary}[section]
\newtheorem{proposition}{Proposition}[section]

\allowdisplaybreaks

\let\pa=\partial
\let\f=\frac

\def\na{\nabla}

\renewcommand{\div}{{\rm div}}

\DeclareMathOperator{\divv}{div}

\DeclareMathOperator{\curl}{curl}

\makeatletter
\@addtoreset{equation}{section}
\makeatother
\makeatletter
\@addtoreset{equation}{section}
\makeatother

\title{Global well-posedness for the compressible Navier--Stokes equations in 3D bounded domains under a new smallness condition
\thanks{This research was partially supported by National Natural Science Foundation of China (No. 12371227) and Fundamental Research Funds for the Central Universities (No. SWU--KU24001).}
}

\author{Lin Xu,\ Xin Zhong{\thanks{Corresponding author. E-mail addresses: mathxu@email.swu.edu.cn (L. Xu), xzhong1014@amss.ac.cn (X. Zhong).}}
\date{}\\
\footnotesize School of Mathematics and Statistics, Southwest University, Chongqing 400715, P. R. China}
\begin{document}

\maketitle

\begin{abstract}
We investigate the three-dimensional barotropic compressible Navier--Stokes equations involving vacuum in a bounded domain with Navier-slip boundary conditions. It is shown that such a model admits a unique global strong solution provided the gradient of the initial velocity and the initial pressure deviation are suitably small in $L^2$. In particular, \textit{the smallness threshold depends only on the known parameters and the prescribed density bounds.} Our result provides a new analytical framework for the global well-posedness theory of the compressible Navier--Stokes equations.
\end{abstract}

\textit{Key words and phrases}. Compressible Navier--Stokes equations; Navier-slip boundary conditions; vacuum; global strong solutions.

2020 \textit{Mathematics Subject Classification}. 35Q35; 35R05; 76N06; 76N10.

\setcounter{tocdepth}{2}
\tableofcontents

\section{Introduction}

\subsection{Background and main result}

The motion of a viscous compressible barotropic fluid is governed by the compressible Navier--Stokes equations
\begin{align}\label{a1}
\begin{cases}
\rho_t+\operatorname{div}(\rho u)=0,\\
(\rho u)_t+\operatorname{div}(\rho u\otimes u)-\mu\Delta u-(\mu+\lambda)\nabla\operatorname{div}u+\nabla P
=0,
\end{cases}
\end{align}
where $\rho=\rho(x,t)\geq0$ and $u=u(x,t)$ denote the density and velocity, respectively. The pressure is given by
\begin{align*}
P=P(\rho)=a\rho^\gamma,\quad a>0,\quad \gamma>1,
\end{align*}
and the viscosity coefficients satisfy
\begin{align*}
\mu>0,\quad 2\mu+3\lambda\geq0.
\end{align*}
Throughout this paper, we normalize $a=1$ for simplicity. We consider system \eqref{a1} in a simply connected bounded domain $\Omega\subset\mathbb R^3$ with smooth boundary $\partial\Omega$ consisting of a finite number of two-dimensional connected components, subject to the initial data
\begin{align}\label{1.2}
\rho(x,0)=\rho_0(x),\quad \rho u(x,0)=\rho_0u_0(x), \quad x\in\Omega,
\end{align}
and the slip boundary condition
\begin{align}\label{a4}
u\cdot n=0,\quad \operatorname{curl}u\times n=0, \quad x\in\partial\Omega,\ t>0,
\end{align}
where $n$ denotes the unit outward normal vector to $\partial\Omega$.

The global-in-time existence of solutions to the compressible Navier--Stokes equations is one of central topics in the mathematical theory of viscous compressible flows. For the Cauchy problem in the whole space, the pioneering work of Matsumura and Nishida \cite{MN80} established the global existence of smooth solutions near a constant equilibrium state under a small perturbation
assumption in higher-order Sobolev spaces. Later, Danchin \cite{D2000} developed the global well-posedness theory in critical Besov spaces, which was subsequently extended and refined in various critical frameworks; see, for instance, \cite{CharveDanchin2010,CMZ2010,H2011a,H2011b,Danchin2014}. These theories are mainly concerned with perturbations of non-vacuum constant states. When the initial density is allowed to vanish, the analysis
becomes substantially more delicate due to the degeneracy of the momentum
equation on the vacuum set. The compatibility condition on the initial data plays a fundamental role in the strong solution theory with vacuum. For example, the local theory developed by Cho--Choe--Kim \cite{CCK04} requires the compatibility condition
\begin{align}\label{xiang}
-\mu\Delta u_0-(\mu+\lambda)\nabla\operatorname{div}u_0
+\nabla P(\rho_0)=\sqrt{\rho_0}g
\end{align}
for some $g\in L^2$. Such a condition guarantees the required regularity of
the material acceleration at the initial time and has become a standard
assumption in the classical theory involving vacuum. Subsequently, Gong--Li--Liu--Zhang \cite{GLLZ20} and Huang \cite{H21} independently removed the compatibility condition \eqref{xiang} and established local well-posedness results for system \eqref{a1} with vacuum. Meanwhile, Huang--Li--Xin \cite{HLX12} extended the local existence theory in \cite{CCK04} and obtained a unique global classical solution with small initial energy, allowing vacuum and large oscillations. Their approach relies on time-weighted energy estimates, the effective viscous flux, and a Zlotnik-type argument to derive uniform upper bounds for the density. More recently, Wen \cite{W25} introduced a scaling-invariant initial quantity and established global strong solutions of \eqref{a1} in $\mathbb{R}^3$ under a scaling-invariant smallness condition. Further developments in scaling-invariant frameworks can be found in \cite{XZ20262,XZ20261,XZ202611}. Other global existence results for large data or different solution frameworks are available in \cite{HHPZ24,FZZ2018,WWZ14,HHW2019,ZLZ2020}. However, these whole-space theories do not directly address the additional difficulties caused by the presence of physical boundaries.

In contrast to the Cauchy problem in the whole space, the analysis in a
bounded domain is strongly influenced by the interaction between the fluid
and the boundary. Boundary terms arise naturally in higher-order energy
estimates, the Fourier analysis available in the whole space is no longer
directly applicable, and the elliptic estimates for the effective viscous
flux and vorticity have to be adapted to the geometry of the domain.
Moreover, the normal and tangential components of the velocity must be
handled consistently with the prescribed boundary conditions. Consequently,
global results for the Cauchy problem cannot in general be transferred
directly to domains with boundaries. For viscous flows in bounded domains, the choice of boundary conditions plays an essential role in both the physical modeling and the mathematical analysis. Besides the classical no-slip condition $u\big|_{\partial\Omega}=0$, slip boundary conditions provide another important description of the fluid-solid interaction. The classical Navier-slip law can be written in the form
\begin{align}\label{Navi2}
u\cdot n=0,\quad\left(2D(u)n+\vartheta u\right)_{\tau}=0
\quad\text{on }\partial\Omega,
\end{align}
where
\begin{align*}
D(u)=\frac12\left(\nabla u+(\nabla u)^{\operatorname{T}}\right),
\end{align*}
$\vartheta$ denotes the friction coefficient, and $v_\tau$ denotes the
tangential component of a vector $v$ on $\partial\Omega$. \eqref{Navi2}$_{1}$ expresses the impermeability of the boundary and \eqref{Navi2}$_{2}$ relates the tangential velocity to the tangential viscous stress. Such slip-type boundary laws trace back to the pioneering works of Navier \cite{N23} and Maxwell \cite{M79}. Experimental observations have further confirmed the relevance of slip phenomena in viscous flows; see
\cite{ZG01}. From the viewpoint of boundary modeling, Navier-type
conditions naturally arise in flows over rough surfaces and porous interfaces;
see \cite{APV98,JM01,BJ67}. From the mathematical perspective, these boundary
conditions have been extensively studied in the analysis of viscous flows,
including regularity theory and related asymptotic problems; see
\cite{CMR98,CQ10,XX07,XX13} and the references therein. Moreover, Navier-type
boundary conditions can also be derived from kinetic descriptions through the
analysis of the Knudsen layer near a solid boundary; see \cite{JW26,555555}.
It is useful to express the Navier-type condition in terms of the vorticity. Under the impermeability condition, \eqref{Navi2}$_{2}$  can equivalently be written as
\begin{align*}
\operatorname{curl}u\times n=-(Au)_\tau\quad\text{on }\partial\Omega,
\end{align*}
where $A$ is a matrix-valued function on $\partial\Omega$ determined by the friction coefficient and the geometry of the boundary. The boundary condition \eqref{a4}$_2$ considered in the current paper corresponds to the special case $A=0$. This formulation is convenient for deriving estimates for the vorticity and the effective viscous flux.

Global existence for system \eqref{a1} in bounded domains has been studied under various boundary conditions and assumptions on the initial data. Under the no-slip boundary condition, Danchin and Tolksdorf \cite{DT23} established the local well-posedness for general large initial data without vacuum, as well as the global well-posedness for small perturbations of a stable constant equilibrium state. For slip boundary conditions, Kobayashi and Zajaczkowski \cite{KZ99} obtained global solutions close to equilibrium by energy methods. For initial data containing vacuum, Yu and Zhao \cite{YZ17} showed the global existence of classical solutions in a cuboid domain under a suitably small initial-energy condition of the form
\begin{align}\label{con}
E_0\triangleq\int_\Omega\left(\frac12\rho_0|u_0|^2+G(\rho_0)\right)\,dx
\leq\varepsilon (\nabla u_0 ),
\end{align}
where
\begin{align*}
G(\rho)\triangleq\rho\int_{\bar\rho_{0}}^{\rho}
\frac{P(s)-P(\bar\rho_{0})}{s^2}\,ds,\quad \bar\rho_0
\triangleq\frac{1}{|\Omega|}\int_\Omega\rho_0\,dx>0,
\end{align*}
and $\varepsilon (\nabla u_0 )$ stands for an admissible smallness threshold depending on a prescribed bound of the initial velocity gradient.
A key point in their analysis is a piecewise-in-time argument combined with
time-weighted estimates, which provides a uniform-in-time upper bound for the
density. Subsequently, Cai and Li \cite{CL23} investigated general smooth bounded domains with slip boundary conditions under the smallness condition \eqref{con}. Their analysis revealed several new estimates for boundary integrals generated by the slip boundary condition and provided an important framework for the study of global  solutions with vacuum in general bounded domains. It should be noted that the existing global theories in \cite{CL23,YZ17} rely on some admissible smallness threshold $\varepsilon(\nabla u_0)$ depending quantitatively on initial velocity bound. Naturally, one may wonder whether one can obtain global strong solutions in bounded domains with a smallness threshold that is independent of any additional prescribed bound on the initial velocity.

The purpose of the present paper is to give an affirmative answer to this
question. We will establish the global well-posedness of strong solutions to the problem \eqref{a1}--\eqref{a4} under a smallness threshold depending only on the known parameters and the prescribed density bounds, that is, the smallness threshold is completely independent of the initial velocity.

Before stating our main result, we introduce some notation and define the
notion of strong solutions used throughout the paper. We use $\Box$ to
indicate the end of a proof  and $A\triangleq B$ means that $A$ is defined
to be $B$. For two nonnegative quantities $A$ and $B$, we write $A\sim B$ if $cB\leq A\leq CB$ for some positive constants $c$ and $C$. Unless otherwise stated, the constants $c$ and $C$ may vary from line to line and depend only on the fixed parameters of the problem. For a scalar or vector valued function $f$, we denote its material derivative by
\begin{align*}
\dot f\triangleq f_t+u\cdot\nabla f
\end{align*}
and its spatial average by
\begin{align*}
\bar f\triangleq\frac{1}{|\Omega|}\int_\Omega f\,dx.
\end{align*}
For $1\leq p\leq\infty$ and $k\geq1$, we use the standard notation
\begin{align*}
L^p=L^p(\Omega),\ W^{k,p}=W^{k,p}(\Omega),\ H^k=W^{k,2}(\Omega),\ H_b^1=\big\{f \in H^1: (f\cdot n)|_{\partial\Omega}=(\curl f \times n)|_{\partial\Omega}=0\big\}.
\end{align*}

It follows from the continuity equation \eqref{a1}$_1$ and the boundary
condition\eqref{a4} that the total mass is conserved. Hence
\begin{align*}
\bar\rho\triangleq\bar\rho(t)=\bar\rho_0=
\frac{1}{|\Omega|}
\int_\Omega\rho_0\,dx>0,
\qquad\text{for all }t\geq0.
\end{align*}
For later use, we also define
\begin{align*}
\rho_{*}\triangleq \|\rho_{0} \|_{L^{\infty}},\qquad\bar P(t)
=\frac{1}{|\Omega|}\int_\Omega P(\rho(x,t))\,dx,
\quad\bar P_0\triangleq\frac{1}{|\Omega|}\int_\Omega P(\rho_0(x))\,dx.
\end{align*}
The effective viscous flux and vorticity are defined by
\begin{align*}
F\triangleq(2\mu+\lambda)\operatorname{div}u-\bigl(P-\bar P\bigr), \quad \omega\triangleq  \operatorname{curl}u=\na\times u.
\end{align*}
Since $(u\cdot n)\big|_{\partial\Omega}=0$ and
\begin{align}\label{barp}
\int_\Omega (P-\bar P)\,dx=0,
\end{align}
it follows from the definition of $F$ that $\bar F=0$.

We next give the precise definition of strong solutions.
\begin{defn}\label{def}
Let $T>0$ and $q\in(3,6)$. Assume that
\begin{align}\label{sou}
0\leq\rho_0\in W^{1,q},\quad u_0\in H_{b}^1.
\end{align}
A pair $(\rho,u)$ is called a strong solution to \eqref{a1}--\eqref{a4} on $\Omega\times[0,T]$ if
\begin{align*}
\begin{cases}
\rho \in C\big([0,T];L^{2}\big)\cap L^{\infty}\big(0,T; W^{1,q}\big),\
\rho_{t}\in L^{\infty}\big(0,T;L^{2}\big), \\
\rho u\in C\big([0,T];L^{2}\big),\ u\in L^{\infty}\big(0,T;H^{1}\big)
\cap L^{2}\big(0,T;H^{2}\big),\ \sqrt{\rho} u_{t}\in L^{2}\big(0,T;L^{2}\big), \\
\sqrt{t} u\in L^{\infty}\big(0,T;H^{2}\big)\cap L^{2}\big(0,T;W^{2,q}\big),
\ \sqrt{t} u_{t}\in L^{2}\big(0,T;H^{1}\big).
\end{cases}
\end{align*}
Moreover, $(\rho,u)$ satisfies \eqref{a1} almost everywhere in
$\Omega\times(0,T)$. A strong solution is called a global strong solution if it is a strong solution on $\Omega\times[0,T]$ for any $T>0$.
\end{defn}

Our main result can be stated as follows.
\begin{theorem}\label{th1}
Let $q\in(3,6)$ and assume that the initial data $(\rho_0,u_0)$ satisfies \eqref{sou}. Then there exists a positive constant $\varepsilon_0=\varepsilon_0\bigl(\gamma, \mu, \lambda, \bar\rho, \rho_{*}, \Omega\bigr)$ such that if
\begin{align}\label{sca1}
\|\nabla u_0\|_{L^2}^2+\|P(\rho_0)-\bar P_0\|_{L^2}^2
\leq\varepsilon_0,
\end{align}
the problem \eqref{a1}--\eqref{a4} admits a unique global strong solution $(\rho,u)$. Moreover, for any $r\in [1,\infty)$ and $p\in [1,6]$, there exist positive constants $C$ and $\eta$ depending only on $\mu, \lambda, \gamma, \bar{\rho}, \rho_{*}, \Omega, \|\nabla u_0\|_{L^2}, r$, and $p$ such that, for any $t\geq1$,
\begin{align}\label{decay}
\|\rho(\cdot,t)-\bar\rho_{0}\|_{L^r}+\|u(\cdot,t)\|_{W^{1,p}} +\| \sqrt{\rho}\dot{u} (\cdot,t)\|^2_{L^2} \leq Ce^{-\eta t}, \quad t\geq1.
\end{align}
Furthermore, if additionally $\rho_0(x_0)=0$ for some $x_0\in\Omega$, then for any $r_1\in(3,q]$, there exist positive constants $\widetilde C_1$ and $\widetilde C_2$ depending only on $\mu, \lambda, \gamma, \bar\rho, \rho_*, \Omega, \|\nabla u_0\|_{L^2}$, and $r_1$ such that, for any $t\geq1$,
\begin{align}\label{blow}
\|\nabla\rho(\cdot,t)\|_{L^{r_1}}\geq \widetilde C_1e^{\widetilde C_2t}.
\end{align}
\end{theorem}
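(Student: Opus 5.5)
The plan is the standard continuation scheme: local existence, a priori estimates closed by a bootstrap, then extension. Local existence of a unique strong solution for data satisfying \eqref{sou} follows from the known local theory without compatibility condition; the Gong--Li--Liu--Zhang / Huang approach is adapted to the slip condition \eqref{a4}, as in Cai--Li. The core is a bootstrap on $[0,T]$. We assume
\begin{align*}
0\le\rho\le 2\rho_*+1,\qquad A_1(T)\triangleq\sup_{t\le T}\|\nabla u\|_{L^2}^2+\int_0^T\|\sqrt\rho\dot u\|_{L^2}^2dt\le 2\varepsilon_0^{1/2},
\end{align*}
together with a time-weighted quantity $A_2(T)=\sup_t \sigma\|\sqrt\rho\dot u\|_{L^2}^2+\int_0^T\sigma\|\nabla\dot u\|_{L^2}^2dt$, where $\sigma=\min\{1,t\}$. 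We then show that these bounds improve to $\rho\le \frac32\rho_*+\frac12$ and $A_1\le \varepsilon_0^{1/2}$, provided $\varepsilon_0$ is small depending only on $\gamma,\mu,\lambda,\bar\rho,\rho_*,\Omega$. The key difference from \cite{CL23,YZ17} is that we never use the energy $E_0$. Instead we work directly with $\|\nabla u\|_{L^2}^2+\|P-\bar P\|_{L^2}^2$, which is the quantity in \eqref{sca1}. So the threshold cannot depend on a separate bound on $\nabla u_0$.

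\textbf{Step 1: energy-type estimate at the $H^1$ level.} We multiply the momentum equation by $\dot u$ and use the slip identity $-\Delta u=\nabla\times\omega-\nabla\div u$, with boundary terms handled as in \cite{CL23} via $u\cdot\nabla u\cdot n=-u\cdot\nabla n\cdot u$. We combine this with the equation for $P-\bar P$,
\begin{align*}
(P-\bar P)_t+u\cdot\nabla (P-\bar P)+\gamma P\div u-(\gamma-1)\overline{P\div u}=0,
\end{align*}
tested against $P-\bar P$. In the pressure equation we write $\div u=(F+P-\bar P)/(2\mu+\lambda)$. This produces a damping term $\frac{\gamma}{2\mu+\lambda}\int P(P-\bar P)^2$, which is coercive because $\bar\rho>0$ is fixed. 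From this we derive
\begin{align*}
\frac{d}{dt}\Big(\mu\|\omega\|^2+(2\mu+\lambda)\|\div u\|^2+c\|P-\bar P\|^2\Big)+\|\sqrt\rho\dot u\|^2+c\|P-\bar P\|^2\le C\big(\|\nabla u\|_{L^3}^3+\|\nabla u\|^2\big).
\end{align*}
The linear term $\|\nabla u\|^2$ is the dangerous one. To absorb it, we add a small multiple of the basic energy identity. We bound $\|\nabla u\|_{L^2}^2$ by $\|F\|^2+\|\omega\|^2+\|P-\bar P\|^2$, and then control $F,\omega$ via the elliptic estimates $\|\nabla F\|+\|\nabla\omega\|\le C(\|\rho\dot u\|+\|\nabla u\|)$, plus Poincaré-type bounds valid for $u\cdot n=0$ on a simply connected domain. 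Alternatively we use the Bogovskii test function $\mathcal B(P-\bar P)$ to gain $\int_0^T\|P-\bar P\|^2$. The cubic term is bounded by $C\|\nabla u\|^{3/2}(\|\sqrt\rho\dot u\|+\|P-\bar P\|_{L^6})^{3/2}$ and absorbed using the bootstrap smallness. The result is exponential decay of $\|\nabla u\|^2+\|P-\bar P\|^2$ with constants depending only on the allowed parameters.

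\textbf{Step 2: time-weighted $\dot u$ estimates and the density bound.} As in \cite{HLX12,CL23}, we apply $\partial_t+\div(u\,\cdot)$ to the momentum equation, test with $\sigma\dot u$, and treat the boundary integrals with the slip identities. This gives $A_2(T)\le C\varepsilon_0^{1/2}$ and decay. We then take $F$ and $\omega$ in $L^\infty$ via $W^{1,q}$ elliptic estimates and Gagliardo--Nirenberg. The upper bound for $\rho$ follows from the Zlotnik lemma applied to $(2\mu+\lambda)D_t\log\rho=-(P-\bar P)-F$. We need $\int_0^T\|F\|_{L^\infty}dt$ to be small on $[0,1]$ and of linear growth with small coefficient on $[1,T]$.

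I expect the \textbf{main obstacle} to be the short-time interval $[0,1]$. Without the compatibility condition, $\|F\|_{L^\infty}$ behaves like $\sigma^{-1/2}$ or worse, so we must show $\int_0^1\|F\|_{L^\infty}\le C\varepsilon_0^{\theta}$ using only $\|\nabla u_0\|^2+\|P_0-\bar P_0\|^2\le\varepsilon_0$. My first attempt is to interpolate $\|F\|_{L^\infty}\le C\|F\|_{L^6}^{1/2}\|\nabla F\|_{L^q}^{1/2}$. I would pair this with $\int_0^1\|\sqrt\rho\dot u\|^2\le C\varepsilon_0$ from Step 1 and $\int_0^1\sigma\|\nabla\dot u\|^2\le C\varepsilon_0^{1/2}$, and use Hölder in time with the weights $\sigma^{-a}$. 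All constants must be tracked so that none depends on $\|\nabla u_0\|$ except through the smallness itself.

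With the a priori bounds in hand, the higher-order estimates ($\nabla\rho\in L^q$, $\sqrt t\,u\in H^2$, etc.) follow by standard arguments and the continuation yields global existence. Uniqueness follows as in \cite{CCK04,H21}. For \eqref{decay}, the exponential decay from Steps 1--2 combines with $\|\nabla u\|_{L^p}\le C(\|\rho\dot u\|+\|P-\bar P\|_{L^p})$, and $L^r$ interpolation uses the bounded density. For \eqref{blow}, we argue by contradiction as in \cite{HLX12,CL23}. Vacuum at $x_0$ persists along the flow, so $\|\rho-\bar\rho\|_{L^\infty}\ge\bar\rho$. Then Gagliardo--Nirenberg gives $\bar\rho\le C\|\rho-\bar\rho\|_{L^2}^{\theta}\|\nabla\rho\|_{L^{r_1}}^{1-\theta}$, and the exponential decay of $\|\rho-\bar\rho\|_{L^2}$ forces exponential growth of $\|\nabla\rho\|_{L^{r_1}}$.
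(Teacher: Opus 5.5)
Your proposal follows essentially the same route as the paper: a bootstrap on the density bound and on the smallness of $\sup_t\|\nabla u\|_{L^2}^2$ (which, together with $\int_0^T\|\nabla u\|_{L^2}^2\,dt\le CD(0)$, is exactly how the paper absorbs $\int_0^T\|\nabla u\|_{L^2}^6\,dt$), the $\dot u$-test using $(u\cdot\nabla u)\cdot n=-(u\cdot\nabla n)\cdot u$ on $\partial\Omega$, a Bogovskii-based Lyapunov functional for exponential decay, the $\sigma$-weighted $\dot u$ estimate with the boundary corrections of \cite{CL23}, and Zlotnik's lemma with $\int\|F\|_{L^\infty}\,dt$ controlled by H\"older in time on $[0,1]$ and by exponential decay afterwards. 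One correction: $\int_\Omega P(P-\bar P)^2\,dx$ by itself is not coercive when vacuum is present (for a small vacuum set it is much smaller than $\|P-\bar P\|_{L^2}^2$). Coercivity appears only after you add the transport contribution $-\frac12\int_\Omega\divv u\,(P-\bar P)^2\,dx$, which yields the weight $(\gamma-\frac12)P+\frac12\bar P\ge\frac12\bar P$; the Bogovskii alternative you mention, which is what the paper uses, avoids this issue entirely.
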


\begin{remark}
Compared with previous global well-posedness theories of barotropic compressible flows in bounded domains \cite{YZ17,CL23}, the smallness threshold $\varepsilon_0$ in \eqref{sca1} is independent of any additional bound on the initial velocity. Certainly, one may wonder whether we can establish the global well-posedness of \eqref{a1}--\eqref{a4} under a smallness condition analogous to the Cauchy problem in \cite{W25,XZ20261,XZ20262} whose threshold is independent of any initial dada. We leave this problem for future investigation.
\end{remark}

\begin{remark}
We emphasize that the assumption $\rho_0\in W^{1,q}$ guarantees the Lipschitz control of the ow of the velocity field, ensuring the uniqueness of solutions. It is difficult for us to weaken this regularity requirement as the Cauchy problem in \cite{Dan23,LZ}, imposing merely nonnegative bounded initial density. New ideas are required to handle this case, which will be left for future studies.
\end{remark}

\begin{remark}
It should be pointed out that special Navier-slip condition \eqref{a4} is imposed for the velocity in order to simplify the statements and present our ideas clearly. By taking advantage of techniques in \cite{CL23}, the result of Theorem \ref{th1} can be extended to the more general slip boundary condition
\begin{align*}
u\cdot n=0,\quad\operatorname{curl}u\times n=-(Au)_\tau\quad\text{on }\partial\Omega,
\end{align*}
where $A$ is a sufficiently smooth positive semi-definite matrix-valued function on $\partial\Omega$.
\end{remark}

\subsection{Strategy of the proof}
We explain the main difficulties and key ideas in the proof of
Theorem \ref{th1}. One of challenges is to establish global {\it a priori}
estimates without imposing the classical compatibility condition \eqref{xiang} on the initial data. The other difficulty arises from the effect of domain involving physical boundary. Although the velocity field satisfies Navier-slip boundary condition \eqref{a4}, yet its material derivative $\dot u$ does not satisfy such a condition, which leads to additional boundary terms in the higher-order energy estimates. The proof is based on a bootstrap argument; see Proposition \ref{pro}. To this end, we need to close the bootstrap assumptions, which consist of the uniform upper bound of the density and the lower-order estimates for the velocity field; see \eqref{cp1}. The proof is divided into several steps,
and we describe the main ideas below.
		
Under the assumption of an uniform upper bound for the density, we first
establish the relationship between the potential energy and the pressure deviation (see Lemma \ref{Gg}). In particular, by using the conservation of total mass,
\begin{align*}
\int_\Omega (\rho-\bar\rho )\,dx=0,
\end{align*}
we prove that
\begin{align*}
\int_\Omega G(\rho)\,dx\sim\|P-\bar P\|_{L^2}^2,
\end{align*}
which is one of motivations for introducing the quantity $D(t)$ in \eqref{dd}.
Combining the div--curl estimate in Lemma \ref{lm24} with the basic energy
estimate \eqref{w8} yields the lower-order energy estimate \eqref{basic}.  	
Next, by applying the Bogovskii operator $\mathcal{B}[P-\bar P]$ (see Lemma \ref{l22}) to the momentum equation, we derive the space-time estimate for the pressure deviation
\begin{align*}
P-\bar P\in L^2(0,T;L^2).
\end{align*}
Based on these estimates, we construct a suitable Lyapunov functional
$\mathcal{L}(t)$ satisfying
\begin{align*}
\frac{d}{dt}\mathcal{L}(t)+ \mathcal{L}(t)\leq0,
\end{align*}	
where
\begin{align*}
\mathcal{L}(t)\sim\|\sqrt{\rho}u(t)\|_{L^2}^2+
\|\left( P -\bar P\right) (t) \|_{L^2}^2.
\end{align*}
Consequently, we obtain the exponential decay estimate \eqref{w1}.
	
To close the second part of the bootstrap assumption \eqref{cp1}$_{2}$, we establish the estimate of $D(t)$ (see \eqref{xxxx1}). More precisely, to derive an uniform estimate of $\|\nabla u\|_{L^2}$, we test the momentum equation with $\dot u$. Unlike the whole-space case, integration by parts in bounded domains produces nontrivial boundary terms. Thus, we need to
handle (see \eqref{I12} and \eqref{3.42})
\begin{align*}
-\int_{\partial\Omega}\sigma^m(P-\bar P)
(u\cdot\nabla u)\cdot n\,dS \quad\text{	and}\quad  (2\mu+\lambda)
\int_{\partial\Omega}\sigma^m\operatorname{div}u
(u\cdot\nabla u)\cdot n\,dS.
\end{align*}
The main technical difficulty comes from the boundary estimates, especially
the control of the boundary trace of $\nabla u$, which generally requires
information on $\nabla^2u$. However, such an estimate is not
available at this stage. To overcome this difficulty, motivated by \cite{CL23}, we have the identity
\begin{align}\label{bian}
(u\cdot\nabla)u\cdot n=-(u\cdot\nabla)n\cdot u\quad\text{on }\partial\Omega
\end{align}
due to $(u\cdot n)\big|_{\partial\Omega}=0$.
Therefore, the derivative of $u$ on the boundary is transferred to the normal
vector of the boundary, which allows us to avoid the use of unavailable higher-order estimates. This yields the desired estimate \eqref{w30}.
The nonlinear term involving $\|\nabla u\|_{L^2}^6$ in \eqref{lwz5} is controlled by the second bootstrap assumption since it can be absorbed
into the dissipation term once
\begin{align*}
\sup_{0\leq t\leq T}  \|\nabla u(t)\|_{L^2}^2\int_0^T\|\nabla u \|_{L^2}^2\,dt
\end{align*}	
is sufficiently small, which leads to \eqref{xxxx1}. Combining this estimate with the lower-order decay estimate derived, we further obtain the exponential decay of $D(t)$ by constructing a suitable Lyapunov functional (see \eqref{fir}). Here the second part of the bootstrap assumption in \eqref{cp1} is closed; see Lemma \ref{sca} for details.
		
It remains to close the first part of the bootstrap assumption \eqref{cp1}$_{1}$, the uniform upper bound of the density. Along the particle trajectories, the continuity equation can be rewritten as a Zlotnik-type ordinary differential equation (see \eqref{zlo}). The key issue is to establish the estimate of $\|F\|_{L^\infty}$ (see \eqref{b-short-split}), which will be divided into two time intervals. For the
short-time interval, we introduce time-weighted quantities
\begin{gather*}
A_1(T)\triangleq\sup_{0\leq t\leq T}
\left( \sigma \|\nabla u(t)\|_{L^2}^2\right)
+\int_0^T\sigma\|\sqrt\rho\dot u \|_{L^2}^2\,dt,
\\
B_1(T)\triangleq\sup_{0\leq t\leq T}\left( \sigma
\|\sqrt\rho\dot u(t)\|_{L^2}^2\right)+\int_0^T\sigma\|\nabla\dot u \|_{L^2}^2\,dt.
\end{gather*}	
To estimate $B_1(T)$, we apply the operator
$\sigma^m\dot u^j\left(\frac{\partial}{\partial t}
+\operatorname{div}(u\cdot)\right)$
to the $j$-th component of the momentum equation \eqref{a1}$_{2}$. Due to
$(\dot u\cdot n)\big|_{\partial\Omega}\neq0$, an additional boundary term
\begin{align*}
\int_{\partial\Omega}\sigma^mF_t\dot u\cdot n\,dS
\end{align*}	
appears (see \eqref{J1-1}). To treat this term, we introduce
\begin{align*}
u^\perp\triangleq-u\times n,\quad
h\triangleq(\nabla n+(\nabla n)^{\mathsf T})u,
\end{align*}	
which transforms the boundary term into (see \eqref{bo11})
\begin{align*}
\int_{\partial\Omega}\sigma^mFh\cdot\dot u\,dS-\int_{\partial\Omega}
\sigma^mFh\cdot(u\cdot\nabla u)\,dS.
\end{align*}	
Then, using the identity
\begin{align}\label{hi}
u^\perp\times n=(-u\times n)\times n=u\quad\text{on }\partial\Omega,
\end{align}
we obtain the desired boundary estimate \eqref{bo12}.
Combining this estimate with the bound of $A_1(T)$ established in Lemma
\ref{Aclosed}, we can close $B_1(T)$ (see \eqref{mB1}). Consequently, we show
the uniform upper bound of the density on the short-time interval (see
\eqref{density-short-F1}--\eqref{xaio1}). For the large-time interval, the exponential decay estimates \eqref{fir} and \eqref{w41} imply the time integrability of $\|F\|_{L^\infty}$ along particle trajectories. Applying Zlotnik's inequality again gives the uniform density bound \eqref{xiao2}.
		
Finally, once the uniform upper bound of the density is established, we propagate the spatial regularity of the density in Lemma \ref{lem-high-order}.
The $W^{1,q}$-estimate of the density follows from the Lam\'e elliptic estimates, the BKM type inequality \eqref{wuqiong}, and
the time integrability of $\|\rho\dot u\|_{L^q}$ (see \eqref{rho-dotu-L1q}). These estimates together with the local existence theory and the continuation argument imply that the local strong solution can be extended globally.

\textbf{Organization of the paper.} Some important inequalities and auxiliary lemmas will be given in Section \ref{sec2}. Section \ref{se3} is devoted to establishing some lower-order estimates and higher-order estimates are obtained in Section \ref{se4}, while the proof of Theorem \ref{th1} is given in Section \ref{se5}.

\section{Preliminaries}\label{sec2}

In this section we collect some auxiliary results that will be used throughout the proof. We start with the local well-posedness result in \cite[Theorem 1.1]{GLLZ20}.
\begin{proposition}[Local well-posedness]\label{local} Assume that the initial data $(\rho_{0},u_{0})$ satisfies \eqref{sou}. Then there exists a time $T >0$ such that the problem \eqref{a1}--\eqref{a4}  admits a unique strong solution in $\Omega\times(0,T ]$. \end{proposition}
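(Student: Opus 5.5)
The statement just given, Proposition \ref{local}, is the local well-posedness result quoted from \cite[Theorem 1.1]{GLLZ20}. The plan is therefore to check that the hypotheses of that theorem hold under \eqref{sou} and the boundary condition \eqref{a4}, and to sketch how its argument adapts, rather than to build a new theory.

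\textbf{Regularized data.} Mollify the density and set $\rho_0^\delta=\rho_0*j_\delta+\delta$, which is bounded below by $\delta>0$ and converges to $\rho_0$ in $W^{1,q}$. Approximate $u_0$ by smooth fields $u_0^\delta\in H_b^1\cap H^2$ satisfying the slip condition \eqref{a4}. For such positive-density data, the classical theory (linearization, Schauder fixed point, and the Lam\'e system with Navier-slip conditions) gives a unique strong solution on some interval $[0,T_\delta]$.

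\textbf{Estimates uniform in $\delta$.} The central task is to show that $T_\delta\geq T_0>0$, with $T_0$ depending only on $\|\rho_0\|_{W^{1,q}}$, $\|u_0\|_{H^1}$ and $\Omega$. The compatibility condition \eqref{xiang} is not assumed, so the acceleration cannot be controlled at $t=0$; I would instead use the time weight $t$, following \cite{GLLZ20,H21}. The steps, in order, are as follows.
\begin{itemize}
\item Test the momentum equation with $u_t$ to get $\sup_t\|\nabla u\|_{L^2}^2+\int_0^T\|\sqrt\rho u_t\|_{L^2}^2\,dt$, where the $H^2$ norm of $u$ is bounded through the Lam\'e elliptic estimate under \eqref{a4}.
\item Differentiate the momentum equation in time and test with $t\,u_t$ to get $\sup_t t\|\sqrt\rho u_t\|_{L^2}^2+\int_0^T t\|\nabla u_t\|_{L^2}^2\,dt$.
\item Deduce $\sqrt t\,u\in L^2(0,T;W^{2,q})$ from $W^{2,q}$ elliptic regularity.
\item Transport $\nabla\rho$ in $L^q$ using $\int_0^T\|\nabla u\|_{L^\infty}\,dt\le C\int_0^T t^{-1/2}\|\sqrt t\,u\|_{W^{2,q}}\,dt<\infty$.
\end{itemize}
A continuity argument in $T$ then closes all of these bounds for small $T_0$.

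\textbf{Limit and uniqueness.} Pass to the limit $\delta\to0$ by compactness (Aubin--Lions), which yields a solution in the class of Definition \ref{def}. For uniqueness, estimate the difference of two solutions in weighted energy norms, using $\nabla u\in L^1(0,T;L^\infty)$ and a Gr\"onwall argument with a Hardy-type inequality in time to handle the weight $t$.

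\textbf{Main obstacle.} The hardest step is the boundary terms created by the slip condition in the time-weighted estimate. The difficulty is that $u_t$ satisfies \eqref{a4} but $\dot u$ does not. I would work with $u_t$ rather than $\dot u$ in this local argument to avoid these terms, and handle the remaining traces with the identity \eqref{bian} and the trace inequality.
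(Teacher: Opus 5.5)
The paper gives no argument for this proposition beyond citing \cite[Theorem 1.1]{GLLZ20}, so your outline has to stand on its own. Its architecture is the standard one: regularize, replace the compatibility condition by the weight $t$, pass to the limit, and prove uniqueness by a Hardy-type argument. Most steps are sound. In particular, working with $u_t$ is a good choice. Since $u_t\cdot n=0$ and $\curl u_t\times n=0$ on $\partial\Omega$, every boundary term in the $u_t$- and $t\,u_t$-energy estimates vanishes, so \eqref{bian} is not even needed there.

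The genuine gap is in your fourth bullet. From the third bullet you only know $\sqrt t\,u\in L^2(0,T;W^{2,q})$, and this does \emph{not} give $\int_0^T t^{-1/2}\|\sqrt t\,u\|_{W^{2,q}}\,dt<\infty$. Cauchy--Schwarz would need $t^{-1/2}\in L^2(0,T)$, which fails logarithmically. For a concrete counterexample, $f(t)=t^{-1/2}|\ln t|^{-1}$ lies in $L^2(0,\frac12)$, yet $\int_0^{1/2}t^{-1/2}f\,dt=\infty$.

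This is not a side issue. The bound $\int_0^T(\|\nabla u\|_{L^\infty}+\|\nabla^2u\|_{L^q})\,dt<\infty$ is what propagates $\nabla\rho\in L^q$, and your uniqueness step uses it too. Notice also that your outline never uses $q<6$; this is exactly where that hypothesis is needed.

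The fix is to bound $\|u\|_{W^{2,q}}$ in $L^1(0,T)$ directly. The lower-order terms in the Lam\'e $W^{2,q}$ estimate are harmless, for instance $\|\rho u\cdot\nabla u\|_{L^q}\le C\|u\|_{H^2}^2\in L^1(0,T)$. The leading term $\|\rho u_t\|_{L^q}$ must be interpolated between your unweighted bound from the first bullet and your weighted bound from the second. With $\theta=\frac{3(q-2)}{2q}$, one has $\|\rho u_t\|_{L^q}\le C\|\sqrt\rho u_t\|_{L^2}^{1-\theta}\|\nabla u_t\|_{L^2}^{\theta}$, and then
\begin{align*}
\int_0^T\|\sqrt\rho u_t\|_{L^2}^{1-\theta}\|\nabla u_t\|_{L^2}^{\theta}\,dt\le\Big(\int_0^T\|\sqrt\rho u_t\|_{L^2}^2\,dt\Big)^{\frac{1-\theta}{2}}\Big(\int_0^T t\|\nabla u_t\|_{L^2}^2\,dt\Big)^{\frac{\theta}{2}}\Big(\int_0^T t^{-\theta}\,dt\Big)^{\frac12}.
\end{align*}
The right-hand side is finite precisely because $q<6$ forces $\theta<1$. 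This is the same device the paper uses in \eqref{0000}--\eqref{rho-dotu-L1q}.

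With this repair the rest of your plan closes, including the uniqueness step. There you bound $\|(\rho_1-\rho_2)(t)\|_{L^2}\le C\sqrt t\,\big(\int_0^t\|\nabla(u_1-u_2)\|_{L^2}^2\,ds\big)^{1/2}$, which uses $\int_0^T\|\nabla u_1\|_{L^\infty}\,dt<\infty$. You then pair it with $\int_0^T t\|\nabla\partial_t u_2\|_{L^2}^2\,dt<\infty$ in a Gr\"onwall argument.
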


We  record two basic inequalities that will be repeatedly used in deriving such estimates. The first one is the following Gagliardo--Nirenberg  inequality (see, e.g., \cite[Lemma 2.3]{LWZ26}).
\begin{lemma}[Gagliardo--Nirenberg  inequality]\label{l21}
Assume that $\Omega$ is a bounded Lipschitz domain in $\mathbb{R}^3$. For $p\in [2, 6]$, $q\in (1, \infty)$, and $r\in (3, \infty)$, there exist generic constants $C_i>0\ (i\in\{1,2,3,4\})$ which may depend only on $p$, $q$, $r$, and $\Omega$ such that, for any $f\in H^1$ and $g\in L^q\cap W^{1, r}$,
	\begin{gather*}
		\|f\|_{L^p}\le C_1\|f\|_{L^2}^\f{6-p}{2p}\|\na f\|_{L^2}^\f{3p-6}{2p}+C_2\|f\|_{L^2},\\
		\|g\|_{L^\infty}\le C_3\|g\|_{L^q}^{\frac{q(r-3)}{3r+q(r-3)}}\|\na g\|_{L^r}^{\frac{3r}{3r+q(r-3)}}+C_4\|g\|_{L^2}.
	\end{gather*}
	Moreover, if $\int_{\Omega}f(x)\,dx=0$ or $(f\cdot n)|_{\partial\Omega}=0$ or $(f\times n)|_{\partial\Omega}=0$, we can choose $C_2=0$. Similarly, the constant $C_4=0$ provided $\int_{\Omega}g(x)\, dx=0$ or $(g\cdot n)|_{\partial\Omega}=0$ or $(g\times n)|_{\partial\Omega}=0$.
\end{lemma}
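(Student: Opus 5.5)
The plan is to reduce both inequalities to their standard counterparts, the classical inequalities on $\mathbb{R}^3$ or on bounded Lipschitz domains, and then treat the lower-order terms separately. This follows the reference \cite[Lemma 2.3]{LWZ26}.

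\emph{First inequality.} Since $\Omega$ is a bounded Lipschitz domain, there is a bounded extension operator $E:H^1(\Omega)\to H^1(\mathbb{R}^3)$ which is also bounded $L^2(\Omega)\to L^2(\mathbb{R}^3)$ (Stein's extension). On $\mathbb{R}^3$ we use the interpolation
\begin{align*}
\|g\|_{L^p(\mathbb{R}^3)}\le\|g\|_{L^2}^{\frac{6-p}{2p}}\|g\|_{L^6}^{\frac{3p-6}{2p}}
\end{align*}
together with Sobolev's inequality $\|g\|_{L^6}\le C\|\nabla g\|_{L^2}$. Applied to $g=Ef$, this gives
\begin{align*}
\|f\|_{L^p}\le C\|f\|_{L^2}^{\frac{6-p}{2p}}\|f\|_{H^1}^{\frac{3p-6}{2p}}.
\end{align*}
We then split $\|f\|_{H^1}\le \|\nabla f\|_{L^2}+\|f\|_{L^2}$, use $(a+b)^\theta\le a^\theta+b^\theta$, and obtain the stated form with $C_2\|f\|_{L^2}$.

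\emph{Second inequality.} We proceed in the same way. First, Morrey-type interpolation on $\mathbb{R}^3$ gives
\begin{align*}
\|g\|_{L^\infty}\le C\|g\|_{L^q}^{1-\theta}\|\nabla g\|_{L^r}^{\theta},
\end{align*}
where $\theta=\frac{3r}{3r+q(r-3)}$. Extension then yields the bound with $\|g\|_{W^{1,r}}$ in place of $\|\nabla g\|_{L^r}$. The leftover lower-order piece is $\|g\|_{L^q}^{1-\theta}\|g\|_{L^r}^{\theta}$. To bound it by $\|g\|_{L^2}$ plus the main term, we interpolate $L^r$ between $L^2$ (or $L^q$) and $W^{1,r}$, and use Young's inequality. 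This is routine but fiddly with exponents.

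\emph{Removing the lower-order constants.} The key step is a Poincaré-type inequality: under each of the three side conditions,
\begin{align*}
\|f\|_{L^2}\le C\|\nabla f\|_{L^2}.
\end{align*}
For zero mean this is the standard Poincaré inequality. For $f\cdot n=0$ or $f\times n=0$ on $\partial\Omega$, we argue by contradiction and compactness (Rellich). A sequence with $\|f_k\|_{L^2}=1$ and $\nabla f_k\to0$ converges to a constant vector $c$. The trace condition passes to the limit by continuity of the trace $H^1\to L^2(\partial\Omega)$, so $c\cdot n=0$ (respectively $c\times n=0$) on $\partial\Omega$. Since the normals of a bounded closed surface span $\mathbb{R}^3$, this forces $c=0$, a contradiction. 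Inserting this into the $C_2$ term gives
\begin{align*}
\|f\|_{L^2}=\|f\|_{L^2}^{\frac{6-p}{2p}}\|f\|_{L^2}^{\frac{3p-6}{2p}}\le C\|f\|_{L^2}^{\frac{6-p}{2p}}\|\nabla f\|_{L^2}^{\frac{3p-6}{2p}},
\end{align*}
which can be absorbed into the first term. Similarly, for $C_4$ we need $\|g\|_{L^2}\le C\|g\|_{L^q}^{1-\theta}\|\nabla g\|_{L^r}^{\theta}$. This follows from $\|g\|_{L^2}\le C\|g\|_{L^\infty}$ (for $q\ge2$) or more generally from $\|g\|_{L^s}\le C\|\nabla g\|_{L^r}$ (Poincaré in $L^r$ under the same conditions, via the same compactness argument). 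Interpolating between $L^q$ and this bound handles all $q$.

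\textbf{Main obstacle.} The main obstacle is the boundary-condition versions of Poincaré's inequality, specifically the step that the limit constant vanishes. It uses that $\partial\Omega$ is a compact surface whose normals are not contained in any plane. Everything else is standard interpolation bookkeeping.
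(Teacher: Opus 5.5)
Your proposal is correct and follows the standard route: extend to $\mathbb{R}^3$, apply the whole-space inequalities, then use Poincaré-type bounds to remove the lower-order terms. The paper itself gives no proof of this lemma; it only quotes \cite[Lemma 2.3]{LWZ26}, and takes the boundary-condition Poincaré inequality you flag as the main obstacle from \cite{BS2012} (Lemma \ref{l23}). Your compactness argument for that inequality works. The fact that a constant vector $c$ with $c\cdot n=0$ on $\partial\Omega$ must vanish follows from $|c|^2|\Omega|=\int_\Omega\operatorname{div}\bigl((c\cdot x)c\bigr)\,dx=\int_{\partial\Omega}(c\cdot x)(c\cdot n)\,dS$. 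The case $c\times n=0$ reduces to this by testing with any $d\perp c$.

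Two steps are less routine than you suggest.

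\emph{First step.} For $q>2$, the lower-order leftover in the $L^\infty$ bound is not handled by interpolating $\|g\|_{L^r}$ alone. Bounding $\|g\|_{L^r}\le C\|\nabla g\|_{L^r}+C\|g\|_{L^2}$ leaves $\|g\|_{L^q}^{1-\theta}\|g\|_{L^2}^{\theta}$. This is not directly $\le C\|g\|_{L^2}$, because $\|g\|_{L^q}$ can be much larger than $\|g\|_{L^2}$. You would need the extra step $\|g\|_{L^q}\le\|g\|_{L^2}^{2/q}\|g\|_{L^\infty}^{1-2/q}$ followed by absorption. It is cleaner to subtract the mean. Set $h=g-\bar g$. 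Poincaré--Wirtinger gives $\|h\|_{W^{1,r}}\le C\|\nabla g\|_{L^r}$. Hölder gives $\|h\|_{L^q}\le 2\|g\|_{L^q}$ and $|\bar g|\le|\Omega|^{-1/2}\|g\|_{L^2}$. Hence
\begin{align*}
\|g\|_{L^\infty}\le\|h\|_{L^\infty}+|\bar g|\le C\|h\|_{L^q}^{1-\theta}\|h\|_{W^{1,r}}^{\theta}+|\bar g|\le C\|g\|_{L^q}^{1-\theta}\|\nabla g\|_{L^r}^{\theta}+|\Omega|^{-1/2}\|g\|_{L^2},
\end{align*}
which gives the general bound and the zero-mean case with $C_4=0$ at once.

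\emph{Second step.} Your justification of $C_4=0$ via $\|g\|_{L^2}\le C\|g\|_{L^\infty}$ does not work as written. Combined with $\|g\|_{L^\infty}\le C\|\nabla g\|_{L^r}$, it only yields $\|g\|_{L^2}\le C\|\nabla g\|_{L^r}$, which lacks the factor $\|g\|_{L^q}^{1-\theta}$. The direct fix avoids $\|g\|_{L^2}$ entirely. Under each of the three side conditions, Lemma \ref{l23} (or Poincaré--Wirtinger) gives $\|g\|_{W^{1,r}}\le C\|\nabla g\|_{L^r}$. Insert this into your extension bound $\|g\|_{L^\infty}\le C\|g\|_{L^q}^{1-\theta}\|g\|_{W^{1,r}}^{\theta}$. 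The same one-line substitution $\|f\|_{H^1}\le C\|\nabla f\|_{L^2}$ gives $C_2=0$ in the first inequality.

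With these adjustments the argument is complete.
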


The following generalized Poincar\'e inequality (see, e.g., \cite[Lemma 8]{BS2012}) allows one to recover the required control from either the normal or tangential boundary condition.
\begin{lemma}[Poincar\'e inequality]\label{l23}
Let $\Omega\subset\mathbb{R}^3$ be bounded with Lipschitz boundary. Then, for $1<p<\infty$, there exists a positive constant $C$ depending only on $p$ and $\Omega$ such that
\begin{align}\label{z2.5}
\|f\|_{L^p} \leq  C\|\nabla f\|_{L^p},
\end{align}
for each vector field $f\in W^{1,p}$ satisfying either $(f\cdot n)|_{\partial\Omega}=0$ or $(f\times n)|_{\partial\Omega}=0$.
\end{lemma}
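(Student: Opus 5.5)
The plan is the classical compactness (contradiction) argument. The only geometric input is that a nonzero constant vector field cannot satisfy either boundary condition on the boundary of a bounded domain.

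\emph{Contradiction setup.} Fix $p\in(1,\infty)$ and treat the two boundary conditions together. Suppose \eqref{z2.5} fails. Then there are vector fields $f_k\in W^{1,p}$ satisfying the same condition, $(f_k\cdot n)|_{\partial\Omega}=0$ for all $k$ or $(f_k\times n)|_{\partial\Omega}=0$ for all $k$, such that
\begin{align*}
\|f_k\|_{L^p}=1,\qquad \|\nabla f_k\|_{L^p}\to0 .
\end{align*}
Thus $(f_k)$ is bounded in $W^{1,p}$.

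\emph{Compactness step.} Since $\Omega$ is bounded and Lipschitz, the Rellich--Kondrachov theorem applies. A subsequence therefore converges strongly in $L^p$ to some $f$, and $\nabla f_k\rightharpoonup\nabla f$ weakly in $L^p$. Because $\nabla f_k\to0$, we get $\nabla f=0$. Since $\Omega$ is connected (it is simply connected), $f\equiv c$ for a constant vector $c$, and $\|c\|_{L^p}=1$, so $c\neq0$. In fact $f_k\to c$ strongly in $W^{1,p}$.

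\emph{Passing the boundary condition to the limit.} The trace operator $W^{1,p}(\Omega)\to L^p(\partial\Omega)$ is continuous on Lipschitz domains, and $n\in L^\infty(\partial\Omega)$. Hence $c\cdot n=0$ (respectively $c\times n=0$) holds $dS$-a.e. on $\partial\Omega$.

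\emph{Ruling out the constant.} This is the only non-routine point, and it follows from the divergence theorem applied to linear functions. For any constant vectors $a,e$ we have
\begin{align*}
\int_{\partial\Omega}(a\cdot n)(e\cdot x)\,dS=\int_\Omega \operatorname{div}\big(a\,(e\cdot x)\big)\,dx=|\Omega|\,a\cdot e .
\end{align*}
\begin{itemize}
\item In the normal case, take $a=e=c$. The left side vanishes because $c\cdot n=0$, so $|\Omega||c|^2=0$, which is a contradiction.
\item In the tangential case, $c\times n=0$ forces $n$ to be parallel to $c$ a.e. on $\partial\Omega$. Choose a nonzero $e\perp c$, which is possible in $\mathbb R^3$. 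Then $e\cdot n=0$ a.e., and taking $a=e$ gives $|\Omega||e|^2=0$, again a contradiction.
\end{itemize}

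This contradiction proves \eqref{z2.5}. The constant $C$ depends only on $p$ and $\Omega$, though the argument does not make it explicit. The main care is in justifying the trace continuity and the compact embedding for merely Lipschitz boundaries, which are standard.
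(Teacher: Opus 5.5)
Your proof is correct. It takes a genuinely different route from the paper, because the paper gives no argument for this lemma at all and imports it from the literature (\cite[Lemma 8]{BS2012}). Your compactness argument is self-contained. It reduces everything to three standard facts on bounded Lipschitz domains: Rellich--Kondrachov, continuity of the trace $W^{1,p}\to L^p(\partial\Omega)$, and the divergence identity $\int_{\partial\Omega}(a\cdot n)(e\cdot x)\,dS=|\Omega|\,a\cdot e$. That identity rules out nonzero constants under either boundary condition, and choosing $e\perp c$ in the tangential case is exactly what is needed.

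The citation gives the inequality with no contradiction argument and no need to check the trace and compactness hypotheses. Your route is transparent but, as you note, leaves $C$ non-explicit.

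One small remark: you use connectedness of $\Omega$, taken from the paper's standing assumption that $\Omega$ is simply connected, whereas the lemma as stated only assumes $\Omega$ is bounded with Lipschitz boundary. Nothing is lost. Such a set has finitely many components $\Omega_j$. Each $\Omega_j$ is a Lipschitz domain with $\partial\Omega_j\subset\partial\Omega$ and the same outward normal. Your limit is then a constant $c_j$ on each $\Omega_j$, and your divergence-theorem step applied on each $\Omega_j$ forces every $c_j=0$, contradicting $\|f\|_{L^p}=1$.
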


Next, we recall the following result of the Bogovskii operator (see, e.g., \cite[Lemma 3.17]{NS04}), which is useful to obtain quantitative control of the pressure deviation $P-\bar P$.
\begin{lemma}\label{l22}
Let $\Omega\subset\mathbb{R}^3$ be a bounded Lipschitz domain. Then, for any $p\in(1,\infty)$, there exists a linear operator $\mathcal{B}=\left[\mathcal{B}_{1}, \mathcal{B}_{2}, \mathcal{B}_{3}\right]: \left\{f \in L^{p}(\Omega): \bar{f}=0\right\}\rightarrow \big(W_{0}^{1, p}\big)^{3}$ such that
\begin{align*}
\begin{cases}
\operatorname{div} \mathcal{B}[f]=f, & x \in \Omega, \\
\mathcal{B}[f]=0, & x \in \partial \Omega,
\end{cases}
\end{align*}
and
\begin{align*}
\| \mathcal{B}[f]\|_{W_{0}^{1, p}} \leq C(p,\Omega)\|f\|_{L^{p}}.
\end{align*}
Moreover, if $f$ can be written in the form $f=\operatorname{div} g$ for a certain $g\in L^p$ satisfying $(g \cdot n)|_{\partial \Omega}=0$, then
\begin{align*}
\|\mathcal{B}[f]\|_{L^{p}} \leq C(p,\Omega)\|g\|_{L^{p}}.
\end{align*}
\end{lemma}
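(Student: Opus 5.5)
The plan is the classical Bogovskii construction: first treat domains that are star-shaped with respect to a ball, then reach a general bounded Lipschitz domain by a finite decomposition. Take $\Omega_0$ star-shaped with respect to a ball $B\Subset\Omega_0$, and fix $\theta\in C_c^\infty(B)$ with $\int\theta\,dx=1$. For $f\in C_c^\infty(\Omega_0)$ with $\bar f=0$, define
\begin{align*}
\mathcal{B}[f](x)=\int_{\Omega_0} f(y)\,\frac{x-y}{|x-y|^3}\int_{|x-y|}^{\infty}\theta\Bigl(y+s\frac{x-y}{|x-y|}\Bigr)s^{2}\,ds\,dy .
\end{align*}
I will check three things for this formula, in order.
\begin{itemize}
\item The support of $\mathcal{B}[f]$ lies in the union of segments joining $\operatorname{supp} f$ to $B$, which is compactly contained in $\Omega_0$ by star-shapedness. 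Hence the boundary condition holds.
\item Differentiating under the integral gives $\operatorname{div}\mathcal{B}[f]=f-\theta\int f\,dy=f$, using $\bar f=0$.
\item The gradient has the form $\nabla\mathcal{B}[f]=\mathcal{K}f+(\text{bounded multiple of } f)$. Here $\mathcal{K}$ is a singular integral operator whose kernel $K(x,x-y)$ is homogeneous of degree $-3$ in $x-y$, smooth in $x$, and has vanishing mean over spheres. The Calder\'on--Zygmund theorem for such variable kernels then gives $\|\nabla\mathcal{B}[f]\|_{L^p}\le C(p,\Omega_0)\|f\|_{L^p}$ for $1<p<\infty$.
\end{itemize}
Poincar\'e's inequality on $W_0^{1,p}$ upgrades this gradient bound to the full $W_0^{1,p}$ norm, and density extends $\mathcal{B}$ to all mean-zero $f\in L^p$. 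I expect the singular-integral estimate to be the main technical step. I would first verify the cancellation property of the kernel explicitly and then cite the standard $L^p$ theory of Calder\'on and Zygmund.

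For a general bounded Lipschitz $\Omega$, I would write $\Omega=\bigcup_{k=1}^N\Omega_k$, where each $\Omega_k$ is star-shaped with respect to a ball; this is possible by compactness and the local graph structure of the boundary. Using a partition of unity and a standard chaining argument along overlapping balls, I would split a mean-zero $f$ as $f=\sum_k f_k$. Each $f_k$ is supported in $\Omega_k$, has mean zero, and satisfies $\|f_k\|_{L^p}\le C\|f\|_{L^p}$. The overlaps only redistribute the means, with constants fixed by the geometry. Setting $\mathcal{B}[f]=\sum_k\mathcal{B}_k[f_k]$ and extending each summand by zero gives the first two assertions. Linearity holds because every step is linear.

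For the negative-norm bound, let $f=\operatorname{div} g$ with $g\cdot n=0$ on $\partial\Omega$, and argue by duality. For $\phi\in L^{p'}$ I need to estimate $\int \mathcal{B}[f]\cdot\phi\,dx=\int f\,\mathcal{B}^*[\phi]\,dx$. Integrating by parts, with no boundary term because $g\cdot n=0$, this equals $-\int g\cdot\nabla\mathcal{B}^*[\phi]\,dx$. So it suffices to show $\|\nabla\mathcal{B}^*\phi\|_{L^{p'}}\le C\|\phi\|_{L^{p'}}$. The adjoint $\mathcal{B}^*$ on each piece $\Omega_k$ has kernel of the same type with the roles of $x$ and $y$ exchanged. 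Its gradient is therefore again a Calder\'on--Zygmund operator plus a bounded part, and the same $L^p$ theory applies. The partition step should be carried out so that it commutes with this duality, that is, $f_k=\operatorname{div} g_k$ with $\|g_k\|_{L^p}\le C\|g\|_{L^p}$. I expect checking this compatibility to be the second delicate point.
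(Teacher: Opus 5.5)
Your proposal is correct and is the standard Bogovskii construction: an explicit kernel on domains star-shaped with respect to a ball, Calder\'on--Zygmund theory for the gradient, and a finite decomposition of the Lipschitz domain. The paper gives no proof of its own but cites \cite[Lemma 3.17]{NS04}, which rests on this same construction, and your duality step is the adjoint form of the usual argument of integrating $f=\operatorname{div}g$ by parts against $\nabla_y$ of the Bogovskii kernel. The compatibility you flag is routine: in the decomposition the cutoff parts satisfy $\Psi\operatorname{div}g=\operatorname{div}(\Psi g)-g\cdot\nabla\Psi$, and every mean correction is a multiple of $\int_\Omega\Phi f\,dx=-\int_\Omega g\cdot\nabla\Phi\,dx$ with $\Phi$ smooth (no boundary term since $g\cdot n=0$), so all non-divergence remainders are $L^p$ functions bounded by $C\|g\|_{L^p}$, which the $L^p\to W^{1,p}_0$ estimate already handles; note also that $\nabla\mathcal{B}[f]$ contains, besides the singular integral and the multiplication term, a weakly singular integral part, which is harmless by Young's inequality.
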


We next recall two div--curl estimates adapted to vector fields satisfying normal or tangential boundary conditions. The following lemma is taken from \cite[Proposition 2.6 and Corollary 2.8]{A2014}.
\begin{lemma}\label{lm24}
Let $1<q<+\infty$, and assume that $\Omega\subset\mathbb R^3$ is a simply connected bounded domain with smooth boundary $\partial\Omega$ having a finite number of two-dimensional connected components. Then, for every $v\in W^{1,q}$, there exists a constant $C>0$ such that
\begin{align}\label{f6}
\|\nabla v\|_{L^q}\le C(\|\operatorname{div} v\|_{L^q}+\|\operatorname{curl} v\|_{L^q})
\end{align}
if $(v\cdot n)\big|_{\partial\Omega}=0$, and
\begin{align}\label{f7}
\|\nabla v\|_{L^q}\leq C(\|\operatorname{div} v\|_{L^{q}}+\|\operatorname{curl} v\|_{L^{q}}+\|v\|_{L^q})
\end{align}
if $(v\times n)\big|_{\partial\Omega}=0$.
\end{lemma}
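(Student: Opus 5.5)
The plan is to follow the classical route for div--curl (Gaffney--Friedrichs) inequalities: first prove a version with a lower-order term by localization and elliptic regularity, and then remove the lower-order term in the normal-trace case by a compactness argument, using the topology of $\Omega$.

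\textbf{Step 1 (inequality with a lower-order term).} For $v\in W^{1,q}$ with $(v\cdot n)|_{\partial\Omega}=0$ or $(v\times n)|_{\partial\Omega}=0$, I would show
\begin{align*}
\|\nabla v\|_{L^q}\le C\bigl(\|\operatorname{div} v\|_{L^q}+\|\operatorname{curl} v\|_{L^q}+\|v\|_{L^q}\bigr).
\end{align*}
\begin{itemize}
\item Take a partition of unity subordinate to a finite cover of $\overline\Omega$ by interior balls and boundary charts.
\item On each patch, $\phi v$ has divergence and curl equal to $\phi\operatorname{div}v$ and $\phi\operatorname{curl}v$, plus terms bounded by $C|v|$.
\item In the interior, $-\Delta(\phi v)=\operatorname{curl}\operatorname{curl}(\phi v)-\nabla\operatorname{div}(\phi v)$. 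The Calder\'on--Zygmund estimate then controls $\|\nabla(\phi v)\|_{L^q}$ by $\|\operatorname{div}(\phi v)\|_{L^q}+\|\operatorname{curl}(\phi v)\|_{L^q}$.
\item Near the boundary, flatten $\partial\Omega$ and write the system $(\operatorname{div},\operatorname{curl})$ with the boundary condition $v\cdot n=0$ (resp. $v\times n=0$). This is an elliptic boundary value problem satisfying the Lopatinskii--Shapiro complementing condition, so the Agmon--Douglis--Nirenberg $L^q$ theory applies. The change of variables produces only commutator terms of lower order, or of small coefficient when the charts are small, and these are absorbed.
\end{itemize}
Step 1 already gives \eqref{f7}.

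\textbf{Step 2 (removing $\|v\|_{L^q}$ when $v\cdot n=0$).} I would argue by contradiction.
\begin{itemize}
\item Suppose there are $v_k$ with $v_k\cdot n=0$, $\|\nabla v_k\|_{L^q}=1$, and $\|\operatorname{div}v_k\|_{L^q}+\|\operatorname{curl}v_k\|_{L^q}\to0$.
\item By Lemma \ref{l23}, $\|v_k\|_{L^q}\le C$. Rellich compactness then gives a subsequence converging strongly in $L^q$ to some $v$.
\item Step 1 applied to differences shows $v_k\to v$ in $W^{1,q}$. Hence $\|\nabla v\|_{L^q}=1$, $\operatorname{div}v=\operatorname{curl}v=0$, and $v\cdot n=0$.
\item Since $\Omega$ is simply connected, $\operatorname{curl}v=0$ yields $v=\nabla\varphi$. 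Then $\Delta\varphi=0$ in $\Omega$ and $\partial_n\varphi=0$ on $\partial\Omega$, so $\varphi$ is constant and $v=0$. This contradicts $\|\nabla v\|_{L^q}=1$.
\end{itemize}

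\textbf{Main obstacle.} The main difficulty is Step 1 at the boundary: verifying the complementing condition for the first-order system with either trace condition, and carrying out the $L^q$ (not merely $L^2$) estimates.
\begin{itemize}
\item For $q=2$ I would instead use the Gaffney identity $\|\nabla v\|_{L^2}^2=\|\operatorname{div}v\|_{L^2}^2+\|\operatorname{curl}v\|_{L^2}^2+\int_{\partial\Omega}(\text{curvature terms})\,dS$, controlling the boundary term via trace interpolation.
\item For general $q$ I would rely on the ADN theory, as done in \cite{A2014}.
\end{itemize}
Since the lemma is quoted from \cite{A2014}, in the paper it suffices to cite it. The above is the argument I would reconstruct.
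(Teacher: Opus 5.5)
Your proposal is correct and is essentially the standard argument behind this result, which the paper does not prove itself but quotes from \cite{A2014}: a Gaffney-type estimate with a lower-order term gives \eqref{f7}, and a compactness argument, in which simple connectivity excludes nontrivial harmonic Neumann fields, then gives \eqref{f6}. The one imprecision is invoking classical ADN theory at the boundary, since that theory is for square systems and the div--curl system is overdetermined (four equations, three unknowns). In the flattened constant-coefficient case the estimate follows more simply: reflect the tangential and normal components with opposite parities (even/odd for $v\cdot n=0$, odd/even for $v\times n=0$), apply the whole-space Calder\'on--Zygmund bound, and then your perturbation argument goes through.
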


Next, recall the effective viscous flux  as follows:
\begin{align*}
F=(2\mu+\lambda)\divv u-(P-\bar P).
\end{align*}
For $F$, $\operatorname{curl}u$, and $\nabla u$, we have the following key {\it a priori} estimates.
\begin{lemma}\label{l24}
Let $(\rho, u)$ be a strong solution of \eqref{a1}--\eqref{a4} on $\Omega\times(0, T]$. Then, for any $p\in[2, 6]$, there exists a positive constant $C$ depending only on $p$, $\mu$, $\lambda$, and $\Omega$ such that
\begin{gather}
\|\nabla F\|_{L^p} \le C \|\rho\dot u\|_{L^p},\label{ff8}\\
 \|\nabla\operatorname{curl}u\|_{L^p}\le C\big(\|\rho\dot u\|_{L^p}+\|\nabla u\|_{L^2}\big),\label{f8}\\
\|\nabla u\|_{L^p} \le C\|\rho\dot u\|_{L^2}^{\frac{3p-6}{2p}}\big(\|\nabla u\|_{L^2}+\|P-\bar P\|_{L^2}\big)^{\frac{6-p}{2p}}
+C\big(\|\nabla u\|_{L^2}+\|P-\bar P\|_{L^p}\big).\label{f12}
\end{gather}
\end{lemma}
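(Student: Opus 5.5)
The plan is to rewrite the momentum equation \eqref{a1}$_2$ in terms of $F$ and $\omega$, and then apply elliptic theory for each with the boundary data coming from \eqref{a4}. Using $\Delta u=\nabla\operatorname{div}u-\operatorname{curl}\operatorname{curl}u$, the momentum equation reads
\begin{align*}
\rho\dot u=\nabla F-\mu\nabla\times\omega .
\end{align*}
The proof then proceeds in three steps, one for each estimate.

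For \eqref{ff8}, take the divergence of this identity. Since $\operatorname{div}(\nabla\times\omega)=0$, we get $\Delta F=\operatorname{div}(\rho\dot u)$ in $\Omega$. On $\partial\Omega$ we have $\omega\times n=0$, so $(\nabla\times\omega)\cdot n$ involves only tangential derivatives of the tangential trace of $\omega$, which vanishes; hence $(\nabla\times\omega)\cdot n=0$. Dotting the identity with $n$ then gives the Neumann condition $\partial_nF=\rho\dot u\cdot n$. Moreover $\bar F=0$. The standard $L^p$ theory for the weak Neumann problem
\begin{align*}
\int_\Omega\nabla F\cdot\nabla\varphi\,dx=\int_\Omega\rho\dot u\cdot\nabla\varphi\,dx\quad\text{for all }\varphi,
\end{align*}
valid for $1<p<\infty$ on smooth domains, yields $\|\nabla F\|_{L^p}\le C\|\rho\dot u\|_{L^p}$.

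For \eqref{f8}, the vector field $\omega$ satisfies $\operatorname{div}\omega=0$, $\omega\times n=0$ on $\partial\Omega$, and $\mu\nabla\times\omega=\nabla F-\rho\dot u$. Apply \eqref{f7} to $\omega$:
\begin{align*}
\|\nabla\omega\|_{L^p}\le C\big(\|\nabla\times\omega\|_{L^p}+\|\omega\|_{L^p}\big)\le C\big(\|\rho\dot u\|_{L^p}+\|\nabla F\|_{L^p}+\|\omega\|_{L^p}\big),
\end{align*}
and then use \eqref{ff8} for the middle term. The lower-order term $\|\omega\|_{L^p}$ has to be reduced to $\|\nabla u\|_{L^2}$, and this is the step I expect to be the main obstacle. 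I would use Lemma \ref{l21} in the form valid for $(\omega\times n)|_{\partial\Omega}=0$ (with $C_2=0$): $\|\omega\|_{L^p}\le C\|\omega\|_{L^2}^{\theta}\|\nabla\omega\|_{L^2}^{1-\theta}$. By Young's inequality, part of this is absorbed into the left-hand side, after first treating $p=2$ and then interpolating up to $p$ with the $L^2$ bound on $\nabla\omega$. Alternatively, one can argue by contradiction using the injectivity of $\nabla\times$ on divergence-free fields with vanishing tangential trace, which holds because $\Omega$ is simply connected and $\partial\Omega$ has the stated topology. Either route gives $\|\omega\|_{L^p}\le C(\|\nabla\times\omega\|_{L^2}+\|\omega\|_{L^2})$, and then \eqref{f8} follows since $\|\omega\|_{L^2}\le C\|\nabla u\|_{L^2}$.

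For \eqref{f12}, apply \eqref{f6} to $u$, using $u\cdot n=0$:
\begin{align*}
\|\nabla u\|_{L^p}\le C\big(\|\operatorname{div}u\|_{L^p}+\|\omega\|_{L^p}\big)\le C\big(\|F\|_{L^p}+\|P-\bar P\|_{L^p}+\|\omega\|_{L^p}\big).
\end{align*}
For $F$, which has zero mean, use Lemma \ref{l21} with $C_2=0$:
\begin{align*}
\|F\|_{L^p}\le C\|F\|_{L^2}^{\frac{6-p}{2p}}\|\nabla F\|_{L^2}^{\frac{3p-6}{2p}},
\end{align*}
with $\|F\|_{L^2}\le C(\|\nabla u\|_{L^2}+\|P-\bar P\|_{L^2})$ and $\|\nabla F\|_{L^2}\le C\|\rho\dot u\|_{L^2}$ by \eqref{ff8}. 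For $\omega$, use Lemma \ref{l21} (tangential version) together with \eqref{f8} at $p=2$:
\begin{align*}
\|\omega\|_{L^p}\le C\|\omega\|_{L^2}^{\frac{6-p}{2p}}\big(\|\rho\dot u\|_{L^2}+\|\nabla u\|_{L^2}\big)^{\frac{3p-6}{2p}}.
\end{align*}
Splitting the power of the sum $\|\rho\dot u\|_{L^2}+\|\nabla u\|_{L^2}$ produces the first term on the right of \eqref{f12} plus a term $C\|\nabla u\|_{L^2}$. Combining the bounds for $F$ and $\omega$ gives \eqref{f12}.
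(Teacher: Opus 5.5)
Your proposal is correct and follows essentially the paper's route: the Neumann problem for $F$, the tangential div--curl estimate \eqref{f7} applied to $\omega$, and \eqref{f6} plus Gagliardo--Nirenberg with \eqref{f8} at $p=2$ for \eqref{f12}. Your reduction of $\|\omega\|_{L^p}$ to $\|\nabla u\|_{L^2}$ (via the $p=2$ case and $H^1\hookrightarrow L^6$, or Young absorption, together with $\|\rho\dot u\|_{L^2}\le C\|\rho\dot u\|_{L^p}$) supplies a step the paper leaves implicit, since its chain \eqref{ff16} only ends with $\|\nabla u\|_{L^p}$; but drop the ``injectivity of $\nabla\times$'' alternative, because when $\partial\Omega$ has several components there are nonzero fields with $\operatorname{div}\omega=\operatorname{curl}\omega=0$ and $\omega\times n=0$ (e.g.\ $\nabla|x|^{-1}$ in a spherical shell), which is exactly why \eqref{f7} retains the $\|v\|_{L^q}$ term.
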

\begin{proof}
Taking the divergence of \eqref{a1}$_{2}$, one finds that the effective viscous flux $F$ satisfies
\begin{align}
\begin{cases}
\Delta F=\divv(\rho\dot u) &\text{in}~\Omega,\\
\f{\pa F}{\pa n}=\rho\dot u\cdot n &\text{on}~\pa\Omega.
\end{cases}
\end{align}
It follows from \cite[Lemma 4.27]{NS04} that
\begin{align}\label{ff14}
\|\na F\|_{L^p}\le C\|\rho\dot u\|_{L^p}.
\end{align}
On the other hand, we can rewrite $\eqref{a1}_2$ as
\begin{align*}
\mu\na\times \operatorname{curl}u=\na F-\rho\dot u.
\end{align*}
By \eqref{a4}, \eqref{f7}, and \eqref{ff14}, we have
\begin{align}\label{ff16}
\|\nabla \operatorname{curl}u\|_{L^p}\le C(\|\na\times\operatorname{curl}u\|_{L^p}+\|\operatorname{curl}u\|_{L^p})\le C(\|\rho\dot u\|_{L^p}+\|\nabla u\|_{L^p}).
\end{align}

One deduces from $\bar F=0$, \eqref{ff14}, and Gagliardo--Nirenberg inequality that, for $p\in [2, 6]$,
\begin{align}\label{z2.19}
\|F\|_{L^p} \le C\|F\|_{L^2}^\frac{6-p}{2p}\|\nabla F\|_{L^2}^\frac{3p-6}{2p}
 \le C\|\rho\dot{u}\|_{L^2}^\frac{3p-6}{2p}
(\|\nabla u\|_{L^2}+\|P-\bar{P}\|_{L^2})^\frac{6-p}{2p}.
\end{align}
Owing to $(\operatorname{curl}u\times n)\big|_{\partial\Omega}=0$ and \eqref{ff16}, we get that
\begin{align*}
\|\operatorname{curl}u\|_{L^p}&\le C\|\operatorname{curl}u\|_{L^2}^\frac{6-p}{2p}\|\nabla\operatorname{curl}u\|_{L^2}^\frac{3p-6}{2p}\nonumber\\
&\le C\big(\|\rho\dot{u}\|_{L^2}+\|\nabla u\|_{L^2}\big)^\frac{3p-6}{2p}\|\nabla u\|_{L^2}^\frac{6-p}{2p}\nonumber\\
&\le C\|\rho\dot{u}\|_{L^2}^\frac{3p-6}{2p}\|\nabla u\|_{L^2}^\frac{6-p}{2p}
+C\|\nabla u\|_{L^2},
\end{align*}
which along with \eqref{f6} and \eqref{z2.19} yields that
\begin{align*}
\|\nabla u\|_{L^p}&\le C\big(\|{\rm div}\,u\|_{L^p}+\|\operatorname{curl}u\|_{L^p}\big)\\[2mm]
&\le C\big(\|F\|_{L^p}+\|P-\bar{P}\|_{L^p}+\|\operatorname{curl}u\|_{L^p}\big)\\
&\le C\|\rho\dot{u}\|_{L^2}^\frac{3p-6}{2p}\big(\|\nabla u\|_{L^2}+\|P-\bar{P}\|_{L^2}\big)^\frac{6-p}{2p}+C\big(\|\nabla u\|_{L^2}+\|P-\bar{P}\|_{L^p}\big),
\end{align*}
proving \eqref{f12}.
\end{proof}

Under the slip boundary condition \eqref{a4}, $\dot u$ does not itself satisfy a homogeneous boundary condition. A suitable correction of $\dot u$ is needed in order to apply div--curl and Poincar\'e-type estimates. Using the identity
$u^\perp\times n=u$ on $\partial\Omega$, we obtain
\begin{align*}
\bigl(\dot u+(u\cdot\nabla n)\times u^\perp\bigr)\cdot n=0
\quad\text{on }\partial\Omega.
\end{align*}
Based on this boundary identity, one obtains the following estimates for $\dot u$ (see, e.g., \cite[Lemma 2.10]{CL23}).
\begin{lemma}\label{xu}
Under the assumption of Lemma \ref{lm24}, there exists a positive constant $C$ depending only on $\Omega$ such that
\begin{gather}
\|\dot u\|_{L^6}\le C\big(\|\nabla\dot u\|_{L^2}+\|\nabla u\|_{L^2}^2\big),\label{d11}\\[2mm]
\|\nabla\dot u\|_{L^2}\le C\big(\|\divv\dot u\|_{L^2}+\|\curl\dot u\|_{L^2}+\|\nabla u\|_{L^4}^2\big).\label{d12}
\end{gather}
\end{lemma}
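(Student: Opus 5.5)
The idea is to replace $\dot u$ by the corrected field
\begin{align*}
v\triangleq\dot u+(u\cdot\nabla n)\times u^\perp,\qquad u^\perp=-u\times n,
\end{align*}
which, by the boundary identity recorded just before Lemma \ref{xu}, satisfies $(v\cdot n)|_{\partial\Omega}=0$. Here $n$ is extended smoothly into $\Omega$, with $\nabla n$ bounded by a constant depending only on $\Omega$. We then apply Lemmas \ref{l23} and \ref{lm24} to $v$ and treat the correction term $w\triangleq(u\cdot\nabla n)\times u^\perp$ as a quadratic perturbation. Pointwise $|w|\le C|u|^2$, and $|\nabla w|\le C(|u|^2+|u||\nabla u|)$.

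For \eqref{d11}, Sobolev embedding and the Poincar\'e inequality \eqref{z2.5} (valid because $v\cdot n=0$) give $\|v\|_{L^6}\le C\|v\|_{H^1}\le C\|\nabla v\|_{L^2}$. Hence
\begin{align*}
\|\dot u\|_{L^6}\le \|v\|_{L^6}+\|w\|_{L^6}\le C\|\nabla\dot u\|_{L^2}+C\|\nabla w\|_{L^2}+C\|w\|_{L^6}.
\end{align*}
It remains to control the $w$-terms:
\begin{itemize}
\item $\|w\|_{L^6}\le C\|u\|_{L^{12}}^2$. This is awkward, so instead I would bound $\|w\|_{L^6}\le C\|w\|_{H^1}$.
\item $\|w\|_{L^2}\le C\|u\|_{L^4}^2\le C\|\nabla u\|_{L^2}^2$.
\item $\|\nabla w\|_{L^2}\le C\|u\|_{L^4}^2+C\|u\|_{L^6}\|\nabla u\|_{L^3}$.
\end{itemize}
In the last bound the factor $\|\nabla u\|_{L^3}$ is not controlled by $\|\nabla u\|_{L^2}$, so I would avoid $\nabla w$ altogether for \eqref{d11}. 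Indeed $\|w\|_{L^6}\le C\|u\|_{L^{12}}^2$ is also not controlled, so a different route is needed. I would use a duality/trace argument instead: write $\|\dot u\|_{L^6}\le C(\|\nabla \dot u\|_{L^2}+\|\dot u\cdot n\|_{L^2(\partial\Omega)})$, a Poincar\'e-type inequality with a boundary term proved by the usual compactness argument. On $\partial\Omega$ we have $\dot u\cdot n=-w\cdot n=-(u\cdot\nabla n)\cdot u$ (the same computation as \eqref{bian}), so $\|\dot u\cdot n\|_{L^2(\partial\Omega)}\le C\|u\|_{L^4(\partial\Omega)}^2$. By the trace embedding $H^1\hookrightarrow L^4(\partial\Omega)$ together with \eqref{z2.5} for $u$, this is at most $C\|\nabla u\|_{L^2}^2$, which gives \eqref{d11}.

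For \eqref{d12}, apply \eqref{f6} to $v$:
\begin{align*}
\|\nabla\dot u\|_{L^2}\le \|\nabla v\|_{L^2}+\|\nabla w\|_{L^2}\le C(\|\divv\dot u\|_{L^2}+\|\curl\dot u\|_{L^2})+C\|\nabla w\|_{L^2}.
\end{align*}
It then suffices to show $\|\nabla w\|_{L^2}\le C\|\nabla u\|_{L^4}^2$. From the pointwise bound,
\begin{align*}
\|\nabla w\|_{L^2}\le C\|u\|_{L^4}^2+C\|u\|_{L^4}\|\nabla u\|_{L^4}.
\end{align*}
Since $u\cdot n=0$, \eqref{z2.5} with $p=4$ gives $\|u\|_{L^4}\le C\|\nabla u\|_{L^4}$, and so $\|\nabla w\|_{L^2}\le C\|\nabla u\|_{L^4}^2$. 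This proves \eqref{d12}.

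The main obstacle is the first inequality. The naive correction approach produces $\|u\|_{L^{12}}^2$ or $\|\nabla u\|_{L^3}$, neither of which is controlled by $\|\nabla u\|_{L^2}^2$. The key step is therefore the boundary-term Poincar\'e inequality
\begin{align*}
\|f\|_{H^1}\le C\big(\|\nabla f\|_{L^2}+\|f\cdot n\|_{L^2(\partial\Omega)}\big),
\end{align*}
proved by contradiction. A sequence with $\|f_k\|_{H^1}=1$ and right-hand side tending to zero converges, by Rellich, strongly in $L^2$, and its traces converge, to a constant vector $c$ with $c\cdot n=0$ on $\partial\Omega$. This forces $c=0$, because $n$ spans $\mathbb R^3$ over a closed surface. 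Combined with the trace estimate for $u$, this gives \eqref{d11}.
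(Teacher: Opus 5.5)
Your proof is correct. For \eqref{d12} it is the route the paper indicates via Cai--Li: you apply \eqref{f6} to the corrected field $v=\dot u+w$, $w=(u\cdot\nabla n)\times u^\perp$, whose normal trace vanishes. You then bound $\|\nabla w\|_{L^2}\le C(\|u\|_{L^4}^2+\|u\|_{L^4}\|\nabla u\|_{L^4})\le C\|\nabla u\|_{L^4}^2$ using \eqref{z2.5} with $p=4$.

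For \eqref{d11} you take a genuinely different route. You use the inhomogeneous Poincar\'e inequality $\|f\|_{H^1}\le C(\|\nabla f\|_{L^2}+\|f\cdot n\|_{L^2(\partial\Omega)})$, proved by compactness. You combine it with $\dot u\cdot n=-(u\cdot\nabla n)\cdot u$ and the trace embedding $H^1(\Omega)\hookrightarrow L^4(\partial\Omega)$. This works. To close the compactness step cleanly, note that $|c|^2|\Omega|=\int_{\partial\Omega}(c\cdot x)(c\cdot n)\,dS$, so $c\cdot n\equiv0$ forces $c=0$.

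However, your reason for abandoning the corrected field is only valid at the $L^2$--$L^6$ level. Taking $p=3/2$ in \eqref{z2.5} makes the quadratic correction harmless. Indeed, $u\in L^6$ and $\nabla u\in L^2$ put $|u|^2$ and $|u||\nabla u|$ in $L^{3/2}$, so
\begin{align*}
\|\dot u\|_{L^{3/2}}\le C\|\nabla v\|_{L^{3/2}}+\|w\|_{L^{3/2}}\le C\|\nabla\dot u\|_{L^2}+C\big(\|u\|_{L^3}^2+\|u\|_{L^6}\|\nabla u\|_{L^2}\big)\le C\big(\|\nabla\dot u\|_{L^2}+\|\nabla u\|_{L^2}^2\big).
\end{align*}
Sobolev's embedding together with the Poincar\'e--Wirtinger inequality gives $\|\dot u\|_{L^6}\le C(\|\nabla\dot u\|_{L^2}+\|\dot u\|_{L^{3/2}})$, and this yields \eqref{d11}.

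So the corrected-field approach covers both estimates with a single auxiliary field, using only Lemmas \ref{l23} and \ref{lm24}. Your approach needs an extra compactness lemma and a boundary trace embedding. In return, it sees $\dot u\cdot n$ only through its $L^2(\partial\Omega)$ norm and needs no interior extension of $n$ and $u^\perp$ for \eqref{d11}. It also makes transparent why only $\|\nabla u\|_{L^2}^2$, rather than $\|\nabla u\|_{L^4}^2$, enters there.
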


Next, the following Zlotnik's inequality (see \cite[Lemma 1.3]{Z2000}) will be applied to derive an uniform upper bound of the density.
\begin{lemma}\label{zo}
Let the function $y$ satisfy
\begin{align*}
y^{\prime}(t)=g(y)+b^{\prime}(t) \text { on }[0, T],\quad  y(0)=y^{0},
\end{align*}
with $g\in C(\mathbb R)$  and  $y$, $b \in W^{1,1}(0, T)$. If  $g(\infty)=-\infty$  and
\begin{align*}
b\left(t_{2}\right)-b\left(t_{1}\right) \leq N_{0}+N_{1}\left(t_{2}-t_{1}\right),
\end{align*}
for all $0 \leq t_{1}<t_{2} \leq T$  with some  $N_{0} \geq 0$  and  $N_{1} \geq 0$, then
\begin{align*}
y(t) \leq \max \left\{y^{0}, \bar{\zeta}\right\}+N_{0}<\infty\quad \text { on }\quad [0, T],
\end{align*}
where $\bar{\zeta}$ is a constant such that
\begin{align*}
g(\zeta) \leq-N_{1} \quad\text { for }\quad \zeta \geq \bar{\zeta}.
\end{align*}
\end{lemma}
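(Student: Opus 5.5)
The plan is a direct comparison argument for the absolutely continuous function $y$, exploiting that $g$ is very negative wherever $y$ is large. Since $g\in C(\mathbb R)$ and $g(\infty)=-\infty$, a constant $\bar\zeta$ with $g(\zeta)\le -N_1$ for all $\zeta\ge\bar\zeta$ exists, so the bound in the statement is a finite number. Set
\begin{align*}
M_0\triangleq\max\{y^0,\bar\zeta\},\qquad M\triangleq M_0+N_0 .
\end{align*}
Because $y,b\in W^{1,1}(0,T)$, both functions are absolutely continuous on $[0,T]$. Hence the equation may be integrated over any subinterval:
\begin{align*}
y(t_2)-y(t_1)=\int_{t_1}^{t_2}g(y(s))\,ds+b(t_2)-b(t_1),\qquad 0\le t_1<t_2\le T.
\end{align*}
The integral is finite because $g\circ y$ is continuous on $[0,T]$.

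We argue by contradiction. Suppose $y(t_2)>M$ for some $t_2\in(0,T]$; note $t_2>0$, since $y(0)=y^0\le M_0\le M$. Define
\begin{align*}
t_1\triangleq\sup\{s\in[0,t_2]:\ y(s)\le M_0\}.
\end{align*}
This set contains $s=0$, so $t_1$ is well defined. By continuity of $y$ we have $y(t_1)\le M_0$. Moreover $y(t_1)=M_0$ whenever $t_1>0$, since $y(s)>M_0$ for $s\in(t_1,t_2]$. In either case $y(t_1)\le M_0<y(t_2)$, so $t_1<t_2$.

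On $[t_1,t_2]$ we have $y(s)\ge M_0\ge\bar\zeta$. Indeed, $y>M_0$ on $(t_1,t_2]$, and at $t_1$ itself either $y(t_1)=M_0$ or $t_1=0$. In the case $t_1=0$ with $y^0<M_0$, continuity and $y(0)<M_0$ would force $t_1>0$, so this case cannot occur. Therefore $g(y(s))\le -N_1$ on $[t_1,t_2]$. Inserting this and the hypothesis on $b$ into the integrated identity gives
\begin{align*}
y(t_2)\le y(t_1)-N_1(t_2-t_1)+N_0+N_1(t_2-t_1)\le M_0+N_0=M,
\end{align*}
which contradicts $y(t_2)>M$. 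Hence $y(t)\le M$ on $[0,T]$, which is the claimed bound.

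The main obstacle is choosing the last time $t_1$ at which $y$ lies at or below the threshold $M_0$, so that $g(y)\le-N_1$ holds on the entire interval $[t_1,t_2]$. Once that is done, the linear-in-time growth $N_1(t_2-t_1)$ allowed for $b$ is cancelled exactly by the dissipation coming from $g$, leaving only the jump $N_0$. Absolute continuity, coming from the $W^{1,1}$ assumption, is what justifies both the integrated identity and the continuity used at $t_1$.
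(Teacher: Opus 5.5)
Your proof is correct. The paper does not prove this lemma itself; it quotes it from Zlotnik \cite[Lemma 1.3]{Z2000}. Your last-exit-time argument is the standard proof of that result: take the last time $t_1\le t_2$ at which $y\le\max\{y^0,\bar\zeta\}$, so that $g(y)\le -N_1$ on $(t_1,t_2]$, and integrate the equation so that the admissible growth $N_1(t_2-t_1)$ of $b$ is cancelled by the dissipation from $g$. Your separate check that $y(t_1)=M_0$ is not needed, since $y(t_1)\le M_0$ together with $y>M_0\ge\bar\zeta$ on $(t_1,t_2]$ already gives the integral bound, but it does no harm.
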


Finally, we use the following BKM type logarithmic inequality in \cite[Lemma 2.7]{CL23} to control $\|\nabla u\|_{L^\infty}$, which plays a crucial role in propagating the $W^{1,q}$-regularity of the density.
\begin{lemma}\label{lem-bkm}
For $3<q<6$, assume that $ u\in W^{2,q}$ and satisfies boundary condition \eqref{a4}. Then there is a constant $C=C(q,\Omega)$ such that
\begin{align}\label{wuqiong}
\|\nabla u\|_{L^\infty}\le C\left(\|\divv u\|_{L^\infty}+\|\operatorname{curl}u\|_{L^\infty} \right)\ln(e+\|\nabla^2u\|_{L^q})+C\|\nabla u\|_{L^2} +C.
\end{align}
\end{lemma}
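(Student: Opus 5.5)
The plan is to reduce \eqref{wuqiong} to a logarithmic estimate for singular integral operators associated with two elliptic boundary value problems: one for the divergence part and one for the curl part of $u$. The second-order norm $\|\nabla^2u\|_{L^q}$ will enter only through the H\"older continuity it provides. Since $q>3$, Morrey's embedding gives $\nabla u\in C^{0,1-3/q}(\overline\Omega)$ with seminorm bounded by $C\|\nabla^2u\|_{L^q}+C\|\nabla u\|_{L^2}$; the same bound holds for $\divv u$ and $\operatorname{curl}u$.

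\textbf{Step 1 (Hodge-type splitting).} Let $\phi$ solve the Neumann problem $\Delta\phi=\divv u$ in $\Omega$, $\partial_n\phi=0$ on $\partial\Omega$, $\bar\phi=0$; this is solvable because $\int_\Omega\divv u\,dx=\int_{\partial\Omega}u\cdot n\,dS=0$. Set $v=u-\nabla\phi$. Then $\divv v=0$ in $\Omega$, $v\cdot n=0$ on $\partial\Omega$, and $\operatorname{curl}v=\operatorname{curl}u$. Because $\Omega$ is simply connected and its boundary consists of finitely many components, $v$ is uniquely determined by its curl, as in the setting of Lemma \ref{lm24}. We represent $v=\operatorname{curl}\psi$ with a vector potential solving $-\Delta\psi=\operatorname{curl}u$, $\divv\psi=0$, $\psi\times n=0$ on $\partial\Omega$. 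This is an elliptic system with regular (Lopatinskii) boundary conditions. Hence $\nabla u=\nabla^2\phi+\nabla\operatorname{curl}\psi$, where each term is given by a Calder\'on--Zygmund-type operator $T$ applied to $f=\divv u$ or $f=\operatorname{curl}u$, plus smoothing remainders. Using the $L^2$-theory (Lemma \ref{lm24}) and Sobolev embedding on interior/away-from-singularity pieces, the remainders are bounded by $C\|\nabla u\|_{L^2}$.

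\textbf{Step 2 (logarithmic kernel estimate).} For each operator we use Green's function bounds up to the boundary, $|\nabla_x^2G(x,y)|\le C|x-y|^{-3}$, together with a cancellation property (zero mean on spheres, or the reflected analogue near $\partial\Omega$). For fixed $x$, split $\Omega$ into $|x-y|<\delta$, $\delta\le|x-y|<1$, and $|x-y|\ge1$. On the inner region, cancellation lets us replace $f(y)$ by $f(y)-f(x)$, giving the bound $C\delta^{1-3/q}[f]_{C^{0,1-3/q}}\le C\delta^{1-3/q}\|\nabla^2 u\|_{L^q}$. The middle region gives $C\|f\|_{L^\infty}\ln(1/\delta)$, and the outer region gives $C\|f\|_{L^2}\le C\|\nabla u\|_{L^2}$. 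Choosing $\delta=\min\{1,\|\nabla^2u\|_{L^q}^{-q/(q-3)}\}$ makes the first term at most $C$. Since $\ln(1/\delta)\le C\ln(e+\|\nabla^2u\|_{L^q})$, summing over $f=\divv u$ and $f=\operatorname{curl}u$ yields \eqref{wuqiong}.

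\textbf{Main obstacle.} The hard step is the boundary analysis. We need kernel bounds and the cancellation for $\nabla^2G$ uniformly up to $\partial\Omega$, both for the Neumann problem and for the vector-potential system with $\psi\times n=0$. We would first try flattening $\partial\Omega$ with a partition of unity and boundary-straightening charts. In flat coordinates, even/odd reflection turns the model problems (Neumann for $\phi$; tangential/normal components for $\psi$) into whole-space problems, where the Huang--Li--Xin argument applies directly. The variable-coefficient errors from the change of variables are lower order: they contribute terms bounded by $C\|\nabla u\|_{L^\infty}\cdot\varepsilon$ on small charts, which are absorbed, plus $C\|\nabla u\|_{L^2}$.
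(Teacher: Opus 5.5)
The paper does not prove this lemma; it quotes \cite[Lemma 2.7]{CL23}. Judged on its own terms, most of your architecture is sound. The Neumann/vector-potential splitting is legitimate: when $\partial\Omega$ has several components, the problem for $\psi$ is solvable only modulo harmonic Dirichlet fields, but $\operatorname{curl}u$ is orthogonal to them and they do not change $\operatorname{curl}\psi$. The three-region splitting, with Morrey control of the H\"older seminorm and $\delta=\min\{1,\|\nabla^2u\|_{L^q}^{-q/(q-3)}\}$, is the right Huang--Li--Xin mechanism. The gap is in the boundary step, which you correctly single out as the crux. The mechanism you propose for it does not close as stated.

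After flattening, the discrepancy $(a^{ij}-\delta_{ij})\partial_{ij}W$ between the transformed operator and $\Delta$ is \emph{not} lower order. It is second order with small coefficients. If you move it to the right-hand side and apply the whole-space logarithmic estimate, its $L^\infty$ norm gets multiplied by $\ln(1/\delta)\sim\ln(e+\|\nabla^2u\|_{L^q})$, where $\delta$ is your inner radius. So what you actually obtain is $C\varepsilon\ln(e+\|\nabla^2u\|_{L^q})\,\|\nabla^2W\|_{L^\infty}$, not $C\varepsilon\|\nabla u\|_{L^\infty}$. For a fixed chart size this cannot be absorbed. Letting the chart size depend on $\|\nabla^2u\|_{L^q}$ makes the cut-off commutator constants depend on it too, which destroys the single logarithm.

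The missing idea is to freeze the coefficients at the evaluation point. For each $x_0$ write $a^{ij}(x_0)\partial_{ij}W(y)=g(y)+[a^{ij}(x_0)-a^{ij}(y)]\partial_{ij}W(y)$, and apply the constant-coefficient logarithmic estimate only to $g$. The error term is $O(|x_0-y|)\,|\nabla^2W(y)|$, so against a kernel of size $|x_0-y|^{-3}$ it is absolutely integrable. It contributes $Cr_0\|\nabla^2W\|_{L^\infty}$ with no logarithm, which is absorbable for $r_0=r_0(\Omega)$. Equivalently, in your Green's-function formulation you need the asymptotics $\nabla_x^2G=\text{model kernel}+O(|x-y|^{-2})$, not merely $|\nabla_x^2G|\le C|x-y|^{-3}$.

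A second point is that reflection requires exactly homogeneous Neumann or Dirichlet data. For $\psi$ on a curved boundary, $\psi\times n=0$ together with $\operatorname{div}\psi=0$ gives the Robin condition $\partial_n(\psi\cdot n)+(\operatorname{div}n)\,\psi\cdot n=0$. Hence the even reflection of $\psi\cdot n$ is not $W^{2,q}$ across the plane, and the reflected equation picks up a surface measure. You must first pass to boundary-normal coordinates and conjugate, e.g.\ by $e^{d\operatorname{div}n}$ with $d$ the signed distance and $\nabla d=n$, or subtract a corrector.

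With these two repairs the remaining localization terms involve only $\|\phi\|_{W^{2,6}}\le C\|\operatorname{div}u\|_{L^6}$ and the analogous bound for $\psi$. They therefore carry $\|\operatorname{div}u\|_{L^\infty}+\|\operatorname{curl}u\|_{L^\infty}$ in front of the logarithm, and your argument closes.
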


\section{A \textit{priori} estimates (I): lower-order estimates}\label{se3}

This section is devoted to establishing the lower-order a \textit{priori} estimates which are independent of $T$. According to the local well-posedness
result in Proposition \ref{local}, we fix $T>0$ and denote by
$(\rho,u)$ a strong solution to \eqref{a1}--\eqref{a4} on
$\Omega\times(0,T]$. Throughout this section, $C$ denotes a generic
positive constant depending only on $\mu, \lambda, \gamma, \Omega, \bar\rho$, and $\rho_*$, which may vary from line to line.

For convenience, we define
\begin{align}\label{dd}
	D(t)\triangleq
	\|\nabla u(t)\|_{L^2}^2+
	\|P(\rho(t))-\bar P(t)\|_{L^2}^2.
\end{align}
The following bootstrap proposition is crucial for our arguments.
\begin{proposition}\label{pro}
Assume that the hypotheses of Theorem \ref{th1} hold. Then there exist positive constants $c_0$ and $\varepsilon_1<1/2$, depending only on $\mu, \lambda, \gamma, \Omega, \bar\rho$, and $\rho_{*}$, such that the following statement holds.
	
If
\begin{align}\label{cp1}
\sup_{0\leq t\leq T}\|\rho(t)\|_{L^\infty}\leq2\rho_{*},\quad
\sup_{0\leq t\leq T}D(t)+\sup_{0\leq t\leq T}\|\nabla u(t)\|_{L^2}^2
\int_0^T\|\nabla u \|_{L^2}^2\,dt\leq2\varepsilon
\end{align}
for some $\varepsilon$ satisfying
\begin{align*}
c_{0}\varepsilon_{0}\leq \varepsilon\leq \varepsilon_{1},
\end{align*}
then
\begin{align}\label{cp2}
\sup_{0\leq t\leq T}\|\rho(t)\|_{L^\infty}
\leq\frac74\rho_{*},\quad\sup_{0\leq t\leq T}D(t)
+\sup_{0\leq t\leq T}\|\nabla u(t)\|_{L^2}^2
\int_0^T\|\nabla u\|_{L^2}^2\,dt
\leq\frac74\varepsilon.
\end{align}
\end{proposition}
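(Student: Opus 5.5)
The proof proceeds in three stages: lower-order energy decay, closure of the $D(t)$ bootstrap, and closure of the density bound via Zlotnik's inequality. Throughout, \eqref{cp1} is assumed, and every constant depends only on $\mu,\lambda,\gamma,\Omega,\bar\rho,\rho_*$.

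\textbf{Stage 1: lower-order energy decay.} Since $\rho\le 2\rho_*$ and mass is conserved, we have $\int_\Omega G(\rho)\,dx\sim\|P-\bar P\|_{L^2}^2$, because $G(\rho)\sim(\rho-\bar\rho)^2\sim(P-P(\bar\rho))^2$ on $[0,2\rho_*]$ and $\|P-\bar P\|_{L^2}\le\|P-P(\bar\rho)\|_{L^2}$. The basic energy identity, combined with \eqref{f6}, then gives
\begin{align*}
\sup_t\big(\|\sqrt\rho u\|_{L^2}^2+\|P-\bar P\|_{L^2}^2\big)+\int_0^T\|\nabla u\|_{L^2}^2\,dt\le CE_0\le C\varepsilon_0 .
\end{align*}
Here $E_0\le C(\|\nabla u_0\|_{L^2}^2+\|P_0-\bar P_0\|_{L^2}^2)$ by the Poincar\'e inequality of Lemma \ref{l23}. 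Next I test the momentum equation with $\mathcal B[P-\bar P]$ from Lemma \ref{l22}. This controls $\int_0^T\|P-\bar P\|_{L^2}^2\,dt$, with the time-derivative term $\frac{d}{dt}\int\rho u\cdot\mathcal B[P-\bar P]$ handled using the second part of Lemma \ref{l22}, since $P_t=-\operatorname{div}(Pu)-(\gamma-1)P\operatorname{div}u$ up to the average. Adding a small multiple of this identity to the energy identity yields a Lyapunov functional $\mathcal L\sim\|\sqrt\rho u\|_{L^2}^2+\|P-\bar P\|_{L^2}^2$ with $\mathcal L'+c\mathcal L\le0$, and hence exponential decay.

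\textbf{Stage 2: closing the $D(t)$ bootstrap.} I test the momentum equation with $\dot u$, using $-\mu\Delta u=\mu\nabla\times\operatorname{curl}u-\mu\nabla\operatorname{div}u$ and the condition $\operatorname{curl}u\times n=0$ to kill the curl boundary term. The remaining boundary integrals have the form $\int_{\partial\Omega}(P-\bar P)(u\cdot\nabla u)\cdot n$ and $\int_{\partial\Omega}\operatorname{div}u\,(u\cdot\nabla u)\cdot n$. By \eqref{bian}, they become $\int_{\partial\Omega}(\cdot)\,u\cdot\nabla n\cdot u$, which trace estimates bound by $C\|\nabla u\|_{L^2}^2(\|\nabla u\|_{L^2}+\|\rho\dot u\|_{L^2})$. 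The interior cubic terms are bounded via \eqref{f12} with $p=3,4$, leaving a remainder $C\|\nabla u\|_{L^2}^6$. This gives
\begin{align*}
\frac{d}{dt}\Big((\mu+\lambda)\|\operatorname{div}u\|^2+\mu\|\operatorname{curl}u\|^2-2\!\int(P-\bar P)\operatorname{div}u\Big)+\|\sqrt\rho\dot u\|_{L^2}^2\le C\|\nabla u\|_{L^2}^2+C\|\nabla u\|_{L^2}^6 .
\end{align*}
I handle the sextic term by writing $\|\nabla u\|^6\le\sup\|\nabla u\|^4\cdot\|\nabla u\|^2$ and integrating; by \eqref{cp1}, its time integral is at most $(2\varepsilon)^2$. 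Adding $K\times$ the Stage 1 energy inequality (with $K$ large), so that the cross term and $\|P-\bar P\|^2$ are absorbed, gives $\sup_t D\le C\varepsilon_0+C\varepsilon^2$. Then
\begin{align*}
\sup\|\nabla u\|^2\int_0^T\|\nabla u\|^2\,dt\le C(\varepsilon_0+\varepsilon^2)\,\varepsilon_0 .
\end{align*}
Choosing $c_0$ large, so that $C\varepsilon_0\le\varepsilon/4$, and $\varepsilon_1$ small, so that $C\varepsilon^2\le\varepsilon/4$, closes the second part of \eqref{cp2}. Exponential decay of $D$ then follows by combining this differential inequality with the decay from Stage 1.

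\textbf{Stage 3: closing the density bound.} Along particle paths, $\frac{d}{dt}\theta(\rho)=-(P-\bar P)-F$ with $\theta(\rho)=(2\mu+\lambda)\ln\rho$, which is Zlotnik's setting (Lemma \ref{zo}) with $b'=-F$. On $[0,\sigma(T)]$, I use
\begin{align*}
\|F\|_{L^\infty}\le C\|F\|_{L^2}^{1/4}\|\nabla F\|_{L^6}^{3/4}\le C\|F\|_{L^2}^{1/4}\big(\|\nabla\dot u\|_{L^2}+\|\nabla u\|_{L^2}^2\big)^{3/4},
\end{align*}
together with the weighted bounds on $A_1$ and $B_1$. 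I expect the \textbf{main obstacle} to be $B_1$, which is estimated by applying $\sigma\dot u^j(\partial_t+\operatorname{div}(u\,\cdot))$ to the momentum equation. The boundary term $\int_{\partial\Omega}F_t\,\dot u\cdot n$ is not zero, so I would rewrite $\dot u\cdot n=-u\cdot\nabla n\cdot u$, move the time derivative off $F$, and control the resulting traces by the $h$ and $u^\perp$ identities via \eqref{hi} together with Lemma \ref{xu}. Integrating in time gives $\int_0^1\|F\|_{L^\infty}\le C\varepsilon^{\alpha}$, so $N_0$ is small and $\rho\le\rho_*\,e^{C\varepsilon^\alpha}\le\frac74\rho_*$. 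On $[1,T]$, the exponential decay of $\|\nabla u\|$, $\|\sqrt\rho\dot u\|$ and $\|P-\bar P\|$ makes $\int_1^T\|F\|_{L^\infty}$ small. With $N_1=0$ and $g(\zeta)=-(P(\zeta)-\bar P)$, which is negative for $\zeta>\max\rho$-level values above $\bar P^{1/\gamma}$, Lemma \ref{zo} again keeps $\rho\le\frac74\rho_*$.
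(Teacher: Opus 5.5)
Your overall route is the paper's: the energy estimate with the Bogovskii Lyapunov functional, the $\dot u$-test using \eqref{bian}, the $B_1$ estimate with the $h$, $u^\perp$ correction, and Zlotnik on $[0,1]$ and $[1,T]$. Two of your variations are fine.
\begin{itemize}
\item You bound $\int_0^T\|\nabla u\|_{L^2}^6\,dt\le 4\varepsilon^2$ via \eqref{cp1} instead of absorbing it. This still closes \eqref{cp2} once $c_0$ is large and $\varepsilon_1$ is small.
\item Your interpolation $\|F\|_{L^\infty}\le C\|F\|_{L^2}^{1/4}\|\nabla F\|_{L^6}^{3/4}$ simplifies the large-time step. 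Since $\|F\|_{L^2}\le CD(t)^{1/2}\le CD(0)^{1/2}e^{-\eta t/2}$ by your Stage 2 decay, H\"older gives $\int_1^T\|F\|_{L^2}^{1/4}\|\nabla\dot u\|_{L^2}^{3/4}dt\le CD(0)^{1/8}(\int_1^T\|\nabla\dot u\|_{L^2}^2dt)^{3/8}\le CD(0)^{1/8}B_1(T)^{3/8}$. So for this proposition you need neither the pointwise decay of $\|\sqrt\rho\dot u\|_{L^2}$ (which you invoke but never derive) nor the paper's \eqref{w41}.
\end{itemize}

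The step that does not work as written is the Zlotnik argument with $g(\zeta)=-(P(\zeta)-\bar P)$. Lemma \ref{zo} needs $g$ independent of $t$, but $\bar P=\bar P(t)$. Moreover $\bar P(t)\ge P(\bar\rho)$ by Jensen, and it can exceed $\rho_*^\gamma$. For example, if $\rho_0\equiv\rho_*$, then $\bar P(t)>P(\bar\rho)=\rho_*^\gamma$ as soon as $\rho(t)$ is nonconstant. So $g>0$ just above the $\rho_*$-level, and $\rho\le\rho_*e^{C\varepsilon^\alpha}$ does not follow.

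The repair is the paper's splitting \eqref{bbbb}:
\begin{itemize}
\item Write $-(P-\bar P)=-(P-P(\bar\rho))+(\bar P-P(\bar\rho))$.
\item Take the autonomous $g(\zeta)=-(P(e^{\zeta/(2\mu+\lambda)})-P(\bar\rho))$. It is $\le 0$ for $\zeta\ge(2\mu+\lambda)\ln\rho_*$ because $\bar\rho\le\rho_*$.
\item Put the nonnegative term $\bar P-P(\bar\rho)=\frac{\gamma-1}{|\Omega|}\int_\Omega G(\rho)\,dx\le C\|P-\bar P\|_{L^2}^2$ into $b'$. Its integral over any time interval is $\le CD(0)$ by Stage 1.
\end{itemize}
With the logarithmic variable, trajectories starting in vacuum must be treated separately, but there $\rho\equiv 0$ and the bound is trivial.

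Two smaller repairs:
\begin{itemize}
\item The reasons you give for $\int G\sim\|P-\bar P\|_{L^2}^2$ only yield $\|P-\bar P\|_{L^2}^2\le C\int G$. The bound $E_0\le CD(0)$ needs the converse. It follows from $\int(\rho-\bar\rho)\,dx=0$ via $c\|\rho-\bar\rho\|_{L^2}^2\le\int(P-P(\bar\rho))(\rho-\bar\rho)\,dx=\int(P-\bar P)(\rho-\bar\rho)\,dx$.
\item The coefficient of $\|\divv u\|_{L^2}^2$ in your Stage 2 functional should be $2\mu+\lambda$, not $\mu+\lambda$.
\end{itemize}
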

\begin{proof}
Proposition \ref{pro} follows from Lemmas \ref{sca} and
\ref{lemupper}.
\end{proof}

We start with several equivalent estimates for the relative potential energy.
\begin{lemma}\label{Gg}
Assume that $0\leq\rho\leq2\rho_{*}$. Then there exists a positive constant $C$ such that
\begin{gather}\label{pressure}
\|P-\bar P\|_{L^2}^2\leq\|P-P(\bar\rho)\|_{L^2}^2
\leq C\int_\Omega G(\rho)\,dx,
\\ \label{deng}
\int_\Omega G(\rho)\,dx\sim\|\rho-\bar\rho\|_{L^2}^2
\sim\|P-\bar P\|_{L^2}^2\sim\int_\Omega\bigl(P -P(\bar\rho)\bigr)
(\rho-\bar\rho)\,dx.
\end{gather}
\end{lemma}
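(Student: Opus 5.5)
The plan is to reduce everything to pointwise comparisons of functions of $\rho$ on the compact interval $[0,2\rho_*]$, and then use mass conservation $\int_\Omega(\rho-\bar\rho)\,dx=0$ to dispose of the linear terms.

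\textbf{First inequality in \eqref{pressure}.} Since $\bar P$ is the spatial average of $P$, it minimizes $c\mapsto\|P-c\|_{L^2}^2$. Taking $c=P(\bar\rho)$ gives $\|P-\bar P\|_{L^2}^2\le\|P-P(\bar\rho)\|_{L^2}^2$ immediately.

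\textbf{Pointwise behavior of $G$.} Write
\begin{align*}
G(\rho)=\rho\int_{\bar\rho}^{\rho}\frac{P(s)-P(\bar\rho)}{s^2}\,ds .
\end{align*}
Then $G\ge0$ and $G(\bar\rho)=G'(\bar\rho)=0$. Moreover
\begin{align*}
G''(\rho)=\frac{P'(\rho)}{\rho}=\gamma\rho^{\gamma-2}>0 \quad\text{for }\rho>0,
\end{align*}
and $G(0)=P(\bar\rho)>0$ is finite. Hence the ratio $G(\rho)/(\rho-\bar\rho)^2$ is continuous and strictly positive on $[0,2\rho_*]\setminus\{\bar\rho\}$, and tends to $\gamma\bar\rho^{\gamma-2}/2$ as $\rho\to\bar\rho$. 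So it is bounded above and below by positive constants depending on $\gamma,\bar\rho,\rho_*$. Note $\bar\rho\le\rho_*$, so $\bar\rho$ lies in the interval. This gives
\begin{align*}
G(\rho)\sim(\rho-\bar\rho)^2 .
\end{align*}

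\textbf{Pointwise behavior of $P$.} The same reasoning applies to $(P(\rho)-P(\bar\rho))/(\rho-\bar\rho)$: it is continuous and positive on $[0,2\rho_*]$, bounded below by $\min\{P(\bar\rho)/\bar\rho,\ \gamma\min(\bar\rho,\cdot)^{\gamma-1}\}$-type constants, with lower bound $\ge c(\gamma,\bar\rho)>0$ by monotonicity and convexity considerations. Consequently
\begin{align*}
(P(\rho)-P(\bar\rho))^2\sim(\rho-\bar\rho)^2\sim (P(\rho)-P(\bar\rho))(\rho-\bar\rho).
\end{align*}
Integrating these pointwise equivalences gives $\int G\sim\|\rho-\bar\rho\|_{L^2}^2\sim\|P-P(\bar\rho)\|_{L^2}^2\sim\int(P-P(\bar\rho))(\rho-\bar\rho)\,dx$, which in particular yields the second inequality in \eqref{pressure}.

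\textbf{Lower bound for $\|P-\bar P\|_{L^2}$.} The remaining point, and the main obstacle, is to show $\|\rho-\bar\rho\|_{L^2}^2\le C\|P-\bar P\|_{L^2}^2$, since subtracting $\bar P$ instead of $P(\bar\rho)$ might a priori lose control. Here mass conservation is used:
\begin{align*}
\int_\Omega (P-P(\bar\rho))(\rho-\bar\rho)\,dx=\int_\Omega (P-\bar P)(\rho-\bar\rho)\,dx ,
\end{align*}
because $\int_\Omega(\bar P-P(\bar\rho))(\rho-\bar\rho)\,dx=0$. Applying Cauchy--Schwarz and the previous equivalence,
\begin{align*}
c\|\rho-\bar\rho\|_{L^2}^2\le\|P-\bar P\|_{L^2}\|\rho-\bar\rho\|_{L^2},
\end{align*}
so $\|\rho-\bar\rho\|_{L^2}\le C\|P-\bar P\|_{L^2}$. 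Combined with the first inequality in \eqref{pressure}, this closes the chain of equivalences in \eqref{deng}.
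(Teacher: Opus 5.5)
Your proof is correct and follows essentially the same route as the paper. Both use the minimizing property of $\bar P$ for the first inequality, compactness of $[0,2\rho_*]$ to get $G(\rho)\sim(\rho-\bar\rho)^2$ and $|P-P(\bar\rho)|\le C|\rho-\bar\rho|$, and mass conservation plus Cauchy--Schwarz to bound $\|\rho-\bar\rho\|_{L^2}$ by $\|P-\bar P\|_{L^2}$.

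The only differences are cosmetic. You compare $(P-P(\bar\rho))(\rho-\bar\rho)$ with $(\rho-\bar\rho)^2$ through the difference quotient $\frac{P(\rho)-P(\bar\rho)}{\rho-\bar\rho}$, whereas the paper compares it with $G$ through the ratio $K$. Your vague lower-bound expression for that quotient can simply be replaced by $\bar\rho^{\gamma-1}$: by convexity the quotient is nondecreasing in $\rho$, so its minimum on $[0,2\rho_*]$ is its value at $\rho=0$.
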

\begin{proof}
Owing to $P(\rho)=\rho^\gamma$, a direct calculation gives
\begin{align*}
G(\rho)=\frac{1}{\gamma-1}\left[
\rho^\gamma-\bar\rho^\gamma-\gamma\bar\rho^{\gamma-1}(\rho-\bar\rho)\right].
\end{align*}
Since the function $r\mapsto r^\gamma$ is strictly convex for $\gamma>1$, one has that
\begin{align*}
G(\rho)>0\quad\text{for }\rho\neq\bar\rho.
\end{align*}
For $\rho\neq\bar\rho$, define
\begin{align*}
H(\rho)\triangleq\frac{G(\rho)}{(\rho-\bar\rho)^2}.
\end{align*}
Notice that
\begin{align*}
G(\bar\rho)=G'(\bar\rho)=0,\quad
G''(\bar\rho)=\gamma\bar\rho^{\gamma-2}>0,
\end{align*}
Taylor's formula yields
\begin{align*}
\lim_{\rho\to\bar\rho}H(\rho)=\frac12G''(\bar\rho)
=\frac{\gamma}{2}\bar\rho^{\gamma-2}>0.
\end{align*}
Hence $H$ can be continuously extended to be a positive function on $[0, 2\rho_*]$. Thus,
\begin{align}\label{G-rho-equiv}
\int_\Omega G(\rho)\,dx\sim\|\rho-\bar\rho\|_{L^2}^2.
\end{align}
	
Similarly, the function
\begin{align*}
K(\rho)\triangleq\frac{\bigl(P -P(\bar\rho)\bigr)(\rho-\bar\rho)}
{G(\rho)},\quad \rho\neq\bar\rho,
\end{align*}
is positive. Moreover,
\begin{align*}
\lim_{\rho\to\bar\rho}K(\rho)=\frac{P'(\bar\rho)}{\frac12G''(\bar\rho)}
=2\bar\rho>0.
\end{align*}
So $K$ can also be extended to be a positive continuous function on
$[0,2\rho_{*}]$, and consequently
\begin{align}\label{Gp}
\int_\Omega G(\rho)\,dx\sim\int_\Omega
\bigl(P -P(\bar\rho)\bigr)(\rho-\bar\rho)\,dx.
\end{align}

We next compare $P-\bar P$ with $P-P(\bar\rho)$. For every constant
$c\in\mathbb R$, it follows from \eqref{barp} that
\begin{align*}
\|P-c\|_{L^2}^2=\|P-\bar P\|_{L^2}^2+|\Omega|\,|\bar P-c|^2.
\end{align*}
Taking $c=P(\bar\rho)$ yields
\begin{align}\label{m22}
\|P-\bar P\|_{L^2}^2\leq\|P-P(\bar\rho)\|_{L^2}^2.
\end{align}
Since $0\leq\rho\leq2\rho_{*}$, the mean value theorem gives
\begin{align*}
|P -P(\bar\rho)|\leq C|\rho-\bar\rho|,
\end{align*}
which combined with \eqref{G-rho-equiv} indicates that
\begin{align*}
\|P-P(\bar\rho)\|_{L^2}^2\leq C\|\rho-\bar\rho\|_{L^2}^2
\leq C\int_\Omega G(\rho)\,dx.
\end{align*}
This along with \eqref{m22} implies \eqref{pressure}.

It remains to establish the equivalence between
$\|P-\bar P\|_{L^2}$ and $\|\rho-\bar\rho\|_{L^2}$.
Since
\begin{align*}
\int_\Omega(\rho-\bar\rho)\,dx=0,
\end{align*}
we have
\begin{align*}
	\int_\Omega
	(P -P(\bar\rho))(\rho-\bar\rho)\,dx
	&=
	\int_\Omega
	(P -\bar P)(\rho-\bar\rho)\,dx .
\end{align*}
Therefore, by Cauchy--Schwarz inequality and \eqref{Gp},
\begin{align*}
\|\rho-\bar\rho\|_{L^2}^2
\leq
C
\|P-\bar P\|_{L^2}
\|\rho-\bar\rho\|_{L^2},
\end{align*}
which together with \eqref{pressure} and
\eqref{G-rho-equiv} leads to
\begin{align*}
\int_\Omega G(\rho)\,dx
\sim
\|\rho-\bar\rho\|_{L^2}^2
\sim
\|P-\bar P\|_{L^2}^2. \tag*{\qedhere}
\end{align*}
\end{proof}

\begin{lemma}\label{lem-basic}
Assume that \eqref{cp1} holds. Then
\begin{align}\label{basic1}
\sup_{0\leq t\leq T}
		\left(
		\|\sqrt{\rho}u(t)\|_{L^2}^2
		+
		\|(P-\bar P)(t)\|_{L^2}^2
		\right)
		+
		\int_0^T
		\left(
		\|\nabla u \|_{L^2}^2
		+
		\| P-\bar P \|_{L^2}^2
		\right)\,dt
		\leq
		C D(0).
	\end{align}
	Moreover, there exists a constant $\eta_1>0$ such that
	\begin{align}\label{w1}
	\sup_{0\leq t\leq T}
		\left[ e^{\eta_1t}
		\left(
		\|\sqrt{\rho}u(t)\|_{L^2}^2
		+
		\|(P-\bar P)(t)\|_{L^2}^2
		\right)\right]
		+
		\int_0^T
		e^{\eta_1t}
		\left(
		\|\nabla u \|_{L^2}^2
		+
		\| P-\bar P \|_{L^2}^2
		\right)\,dt
		\leq
		C D(0).
	\end{align}
\end{lemma}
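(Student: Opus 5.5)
The plan is to combine the basic energy identity with a Bogovskii-type estimate for the pressure, and then to build a Lyapunov functional from the two.

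\textbf{Step 1: energy identity.} Multiply \eqref{a1}$_2$ by $u$ and use the continuity equation. Under \eqref{a4} one has $-\Delta u=\nabla\times\operatorname{curl}u-\nabla\operatorname{div}u$, and since $\operatorname{curl}u\times n=0$ and $u\cdot n=0$ on $\partial\Omega$, the boundary terms vanish. This gives
\begin{align*}
\frac{d}{dt}\int_\Omega\Bigl(\frac12\rho|u|^2+G(\rho)\Bigr)dx+\mu\|\operatorname{curl}u\|_{L^2}^2+(2\mu+\lambda)\|\operatorname{div}u\|_{L^2}^2=0.
\end{align*}
By \eqref{f6}, the dissipation is bounded below by $c\|\nabla u\|_{L^2}^2$. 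Lemma \ref{Gg} gives $\int G(\rho)\sim\|P-\bar P\|_{L^2}^2$, which is valid because $\rho\le 2\rho_*$ by \eqref{cp1}. Initially, $\|\sqrt{\rho_0}u_0\|_{L^2}^2\le\rho_*\|u_0\|_{L^2}^2\le C\|\nabla u_0\|_{L^2}^2$ by Lemma \ref{l23}. So the energy at time zero is at most $CD(0)$, which yields the sup part of \eqref{basic1} and the bound on $\int_0^T\|\nabla u\|_{L^2}^2\,dt$.

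\textbf{Step 2: pressure dissipation.} Test \eqref{a1}$_2$ against $\mathcal{B}[P-\bar P]$ from Lemma \ref{l22}. Since $\mathcal{B}$ vanishes on $\partial\Omega$, the pressure term becomes $\|P-\bar P\|_{L^2}^2$. The time-derivative term is rewritten as
\begin{align*}
\frac{d}{dt}\int\rho u\cdot\mathcal{B}[P-\bar P]\,dx-\int\rho u\cdot\mathcal{B}[(P-\bar P)_t]\,dx.
\end{align*}
Using $P_t=-\operatorname{div}(Pu)-(\gamma-1)P\operatorname{div}u$, the average-zero part of $(P-\bar P)_t$ splits into a divergence plus a term bounded by $C|\nabla u|$. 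The second part of Lemma \ref{l22} then gives $\|\mathcal{B}[(P-\bar P)_t]\|_{L^2}\le C\|\nabla u\|_{L^2}$, using $\|u\|_{L^2}\le C\|\nabla u\|_{L^2}$ and $\rho\le2\rho_*$.

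The convective term is bounded by $\|\rho u\otimes u\|_{L^2}\|\nabla\mathcal{B}\|_{L^2}\le C\|u\|_{L^4}^2\|P-\bar P\|_{L^2}$. Here I control $\|u\|_{L^4}^2\le C\|\nabla u\|_{L^2}^2\le C\varepsilon^{1/2}\|\nabla u\|_{L^2}$, using the smallness in \eqref{cp1}. The viscous term is at most $C\|\nabla u\|_{L^2}\|P-\bar P\|_{L^2}$. Altogether,
\begin{align*}
\frac{d}{dt}\int\rho u\cdot\mathcal{B}[P-\bar P]\,dx+\frac12\|P-\bar P\|_{L^2}^2\le C_1\|\nabla u\|_{L^2}^2 .
\end{align*}
Integrating in time and using $|\int\rho u\cdot\mathcal{B}|\le C(\|\sqrt\rho u\|_{L^2}^2+\|P-\bar P\|_{L^2}^2)$ completes \eqref{basic1}.

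\textbf{Step 3: Lyapunov functional.} Set
\begin{align*}
\mathcal{L}=K\int\Bigl(\frac12\rho|u|^2+G\Bigr)dx+\int\rho u\cdot\mathcal{B}[P-\bar P]\,dx.
\end{align*}
For $K$ large, $\mathcal{L}\sim\|\sqrt\rho u\|_{L^2}^2+\|P-\bar P\|_{L^2}^2$, and
\begin{align*}
\mathcal{L}'+c\bigl(\|\nabla u\|_{L^2}^2+\|P-\bar P\|_{L^2}^2\bigr)\le0.
\end{align*}
Since $\|\sqrt\rho u\|_{L^2}^2\le C\|\nabla u\|_{L^2}^2$, the dissipation dominates $2\eta_1\mathcal{L}$ for some $\eta_1>0$. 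Multiplying by $e^{\eta_1t}$ and integrating gives \eqref{w1}.

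The main obstacle is Step 2, specifically the bound on $\mathcal{B}[(P-\bar P)_t]$, where one must decompose $P_t-\bar P_t$ carefully into a divergence with zero normal trace plus an $L^2$ remainder with zero mean, and the handling of the cubic convective term through the bootstrap smallness.
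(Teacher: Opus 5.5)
Your proposal follows the paper's proof essentially step by step: the energy identity with $G(\rho)$, testing with $\mathcal{B}[P-\bar P]$, and a Lyapunov functional built from a large multiple of the energy plus the cross term. The one minor difference is the convective term, which you bound by $\|u\|_{L^4}^2\|P-\bar P\|_{L^2}$ together with the bootstrap smallness, whereas the paper uses $\|u\|_{L^6}^2\|\nabla\mathcal{B}[P-\bar P]\|_{L^{3/2}}$ and only the density bound. There is, however, a sign slip to fix: since $\int_\Omega\nabla P\cdot\mathcal{B}[P-\bar P]\,dx=-\|P-\bar P\|_{L^2}^2$, the Bogovskii test actually gives $-\frac{d}{dt}\int_\Omega\rho u\cdot\mathcal{B}[P-\bar P]\,dx+\frac12\|P-\bar P\|_{L^2}^2\le C\|\nabla u\|_{L^2}^2$, so the cross term must enter $\mathcal{L}$ with a minus sign (as in the paper's $\mathcal C_1$), after which your Step 3 goes through unchanged.
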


\begin{proof}
{\bf Step 1. Basic energy estimate.}	
Differentiating $G(\rho)$ with respect to $\rho$, we find
	\begin{align*}
	G'(\rho)
	=
	\int_{\bar\rho}^{\rho}
	\frac{P(s)-P(\bar\rho)}{s^2}\,ds
	+
	\frac{P(\rho)-P(\bar\rho)}{\rho},
\end{align*}
	and hence
	\begin{align}\label{Gs}
		\rho G'(\rho)
		=
		G(\rho)+P -P(\bar\rho).
	\end{align}
	Multiplying the continuity equation \eqref{a1}$_1$ by $G'(\rho)$ and
	using \eqref{Gs}, we derive
	\begin{align*}
	G(\rho)_t
	+
	\operatorname{div}\bigl(G(\rho)u\bigr)
	+
	\bigl(P -P(\bar\rho)\bigr)\operatorname{div}u
	=0.
\end{align*}
	Integrating the above identity over $\Omega$ and using the boundary condition $(u\cdot n)\big|_{\partial\Omega}=0$
	gives
	\begin{align}\label{G2}
		\frac{d}{dt}
		\int_\Omega G(\rho)\,dx
		+
		\int_\Omega P \operatorname{div}u\,dx
		=0.
	\end{align}
Multiplying the momentum equation \eqref{a1}$_2$ by
	$u$ and integrating the resultant over $\Omega$, we obtain
	\begin{align*}
		\frac{d}{dt}
		\int_\Omega\frac12\rho|u|^2\,dx
		+
		(2\mu+\lambda)\|\operatorname{div}u\|_{L^2}^2
		+
		\mu\|\operatorname{curl}u\|_{L^2}^2
		-
		\int_\Omega P\operatorname{div}u\,dx
		=0,
	\end{align*}
which together with \eqref{G2} yields
	\begin{align}\label{w8}
		\frac{d}{dt}\int_\Omega
		\left(
		\frac12\rho|u|^2+G(\rho)
		\right)\,dx
		+
		(2\mu+\lambda)\|\operatorname{div}u\|_{L^2}^2
		+
		\mu\|\operatorname{curl}u\|_{L^2}^2
		=0.
	\end{align}
Therefore, integrating \eqref{w8} over $[0,T]$ and using \eqref{f6}, Lemma \ref{Gg}, and Poincar\'e's inequality, one has
	\begin{align}\label{basic}
	\sup_{0\leq t\leq T}
	\left(
	\|\sqrt{\rho}u(t)\|_{L^2}^2
	+
	\|(P-\bar P)(t)\|_{L^2}^2
	\right)
	+
	\int_0^T
	\|\nabla u \|_{L^2}^2\,dt
	\leq
	C D(0).
	\end{align}
	
	{\bf Step 2. Time-integrability of the pressure deviation.}
	Let
$\Phi
	\triangleq
	\mathcal B[P-\bar P],
$
where $\mathcal B$ denotes the Bogovskii operator introduced in Lemma \ref{l22}. Testing \eqref{a1}$_2$ by $\Phi$ gives
	\begin{align*}
		\|P-\bar P\|_{L^2}^2
		= &
		\frac{d}{dt}
		\int_\Omega\rho u\cdot\Phi\,dx
		-
		\int_\Omega
		\rho u\otimes u:\nabla\Phi\,dx
		-
		\int_\Omega
		\rho u\cdot\mathcal B[P_t-\bar P_t]\,dx\nonumber\\
		&
		+
		\mu\int_\Omega\nabla u:\nabla\Phi\,dx
		 +
		(\lambda+\mu)
		\int_\Omega
		(P-\bar P)\operatorname{div}u\,dx\nonumber\\
		\leq& \frac{d}{dt}
		\int_\Omega\rho u\cdot\Phi\,dx+C\|\nabla u\|_{L^2}^2
		\|P-\bar P\|_{L^{3/2}}+\frac12\|P-\bar P\|_{L^2}^2
		+
		C\|\nabla u\|_{L^2}^2\nonumber\\
	    \leq&
		\frac{d}{dt}
		\int_\Omega\rho u\cdot\Phi\,dx
		+
		\frac12\|P-\bar P\|_{L^2}^2
		+
		C\|\nabla u\|_{L^2}^2,
	\end{align*}
due to Lemma \ref{l22}, Poincar\'e's inequality, and the following facts
\begin{gather*}
	\|\mathcal B[P_t-\bar P_t]\|_{L^2}
	\leq
	C\|\nabla u\|_{L^2},\\
	P_t+\operatorname{div}(Pu)
	+
	(\gamma-1)P\operatorname{div}u
	=0,\\ 
		P_t-\bar P_t
		=
		-\operatorname{div}(Pu)
		-
		(\gamma-1)
		\left(
		P\operatorname{div}u
		-
		\overline{P\operatorname{div}u}
		\right).
	\end{gather*}
	Consequently,
	\begin{align}\label{p-time}
		\frac12\|P-\bar P\|_{L^2}^2
		\leq
		\frac{d}{dt}
		\int_\Omega
		\rho u\cdot\mathcal B[P-\bar P]\,dx
		+
		C\|\nabla u\|_{L^2}^2.
	\end{align}
	Integrating \eqref{p-time} over $[0,T]$ and using
	\eqref{basic}, we get
		\begin{align*}
	\int_0^T
	\|  P-\bar P \|_{L^2}^2\,dt
	\leq
	CD(0),
\end{align*}
which combined with \eqref{basic}
yields \eqref{basic1}.
	
	{\bf Step 3. Exponentially weighted estimate.}
	Define
	\begin{align*}
E(t)\triangleq\int_{\Omega}
		\left(
		\frac12\rho|u|^2+G(\rho)
		\right)(x,t)\,dx,\ \ \mathcal C_1(t)
		\triangleq
		C_1E(t)
		-
		\int_\Omega
		\rho u\cdot\mathcal B[P-\bar P]\,dx,
	\end{align*}
	where $C_1>0$ is a constant to be chosen sufficiently large.
	By  Lemmas \ref{l22}, \ref{Gg},  and \eqref{cp1},
	\begin{align*}
		\left|
		\int_\Omega
		\rho u\cdot\mathcal B[P-\bar P]\,dx
		\right|
		 \leq
		C\|\sqrt\rho u\|_{L^2}
		\|P-\bar P\|_{L^2}
		 \leq
		CE(t).
	\end{align*}
	Thus, choosing $C_1$ sufficiently large, we have
	\begin{align}\label{modi1}
		\mathcal C_1(t)
		\sim
		E(t)
		\sim
		\|\sqrt\rho u(t)\|_{L^2}^2
		+
		\|\left( P -\bar P\right) (t) \|_{L^2}^2.
	\end{align}
	Multiplying \eqref{w8} by $C_1$ and adding \eqref{p-time}, we obtain
	\begin{align*}
		\frac{d}{dt}\mathcal C_1(t)
		 +
		C_1(2\mu+\lambda)\|\operatorname{div}u\|_{L^2}^2
		+
		C_1\mu\|\operatorname{curl}u\|_{L^2}^2
		 +
		\frac12\|P-\bar P\|_{L^2}^2
		\leq
		C\|\nabla u\|_{L^2}^2.
	\end{align*}
	Using the div--curl estimate \eqref{f6} and choosing $C_1$ sufficiently large, the
	right-hand side can be absorbed by the viscous dissipation term. Hence
	\begin{align*}
		\frac{d}{dt}\mathcal C_1(t)
		+
		C\|\nabla u\|_{L^2}^2
		+
		C\|P-\bar P\|_{L^2}^2
		\leq0,
	\end{align*}
which together with Poincar\'e's inequality and \eqref{cp1} implies
	\begin{align}\label{modified}
		\frac{d}{dt}\mathcal C_1(t)
		+
		\eta_1\mathcal C_1(t)
		+
		C\|\nabla u\|_{L^2}^2
		+
		C\|P-\bar P\|_{L^2}^2
		\leq0,
	\end{align}
for some sufficiently small $\eta_1>0$.
Multiplying \eqref{modified} by $e^{\eta_1t}$ and integrating the resultant over $[0,T]$ gives
	\begin{align*}
		\sup_{0\leq t\leq T}
		\left( e^{\eta_1t}\mathcal C_1(t)\right)
		+
		C\int_0^T
		e^{\eta_1t}
		\left(
		\|\nabla u \|_{L^2}^2
		+
		\| P-\bar P \|_{L^2}^2
		\right)\,dt
		\leq
		\mathcal C_1(0).
	\end{align*}
This along with \eqref{modi1} and
	$
	\mathcal C_1(0)\leq CD(0)$ concludes \eqref{w1}.
\end{proof}

 The basic estimate obtained in Lemma \ref{lem-basic} controls the kinetic energy and the pressure deviation in $L^2$. To close the bootstrap argument
 \eqref{cp1}$_2$, it remains to derive a uniform estimate for $D(t)$.
 \begin{lemma}\label{lem2}
 Assume that \eqref{cp1} holds. Then
 	\begin{gather}\label{xxxx1}
 		\sup_{0\leq t\leq T}D(t)
 		+
 		\int_0^T
 		\|\sqrt{\rho} \dot u \|_{L^2}^2\,dt
 		\leq
 		C D(0),
\\ \label{I02}
 	A_1(T)
 	=
 	\sup_{0\leq t\leq T}
 	\left( \sigma \|\nabla u(t)\|_{L^2}^2\right)
 	+
 	\int_0^T
 	\sigma \|\sqrt\rho \dot u \|_{L^2}^2\,dt
 		\leq
 		C D(0)
 		+
 		C\int_0^T
 		\sigma \|\nabla u \|_{L^3}^3\,dt,
 	\end{gather}
 	where $
 	\sigma=\sigma(t)\triangleq\min\{1,t\}$.
 	Moreover, there exists a constant $\eta_2>0$ such that
 	\begin{align}\label{fir}
 		&\sup_{0\leq t\leq T}
 		\left( e^{\eta_2t}
 		D(t)\right)
 		+
 		\int_0^T
 		e^{\eta_2t}
 		\|\sqrt{\rho} \dot u \|_{L^2}^2\,dt
 		\leq
 		C D(0).
 	\end{align}
 \end{lemma}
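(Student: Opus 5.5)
The plan is to test the momentum equation \eqref{a1}$_2$ by $\sigma^m\dot u$, with $m=0$ for \eqref{xxxx1} and $m=1$ for \eqref{I02}. I first rewrite the viscous terms through $-\mu\Delta u-(\mu+\lambda)\nabla\operatorname{div}u=\mu\nabla\times\operatorname{curl}u-(2\mu+\lambda)\nabla\operatorname{div}u$, which gives
\begin{align*}
\int_\Omega\sigma^m\rho|\dot u|^2\,dx=\int_\Omega\sigma^m\bigl(\nabla F-\mu\nabla\times\operatorname{curl}u\bigr)\cdot\dot u\,dx .
\end{align*}
The curl part is integrated by parts using $\operatorname{curl}u\times n=0$, so no boundary term appears. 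It produces $\frac{\mu}{2}\frac{d}{dt}\|\operatorname{curl}u\|_{L^2}^2$ plus a cubic remainder bounded by $C\|\nabla u\|_{L^3}^3$.

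For the $\nabla F$ part I write $\dot u=u_t+u\cdot\nabla u$ and use $u_t\cdot n=0$ on $\partial\Omega$. This yields
\begin{align*}
\frac{d}{dt}\Bigl(\frac{2\mu+\lambda}{2}\|\operatorname{div}u\|_{L^2}^2-\int_\Omega(P-\bar P)\operatorname{div}u\,dx\Bigr),
\end{align*}
together with terms involving $P_t$, which I replace via $P_t=-\operatorname{div}(Pu)-(\gamma-1)P\operatorname{div}u$. It also leaves the term $\int\nabla F\cdot(u\cdot\nabla u)$. Integrating that term by parts creates the boundary integral $\int_{\partial\Omega}\sigma^m F(u\cdot\nabla u)\cdot n\,dS$, which contains the two boundary terms singled out in the strategy section. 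Using \eqref{bian} this becomes $-\int_{\partial\Omega}\sigma^mF(u\cdot\nabla n)\cdot u\,dS$. By the trace theorem it is bounded by
\begin{align*}
C\|F\|_{H^1}\|u\|_{L^4}^2+C\|F\|_{L^2}\||u|\,|\nabla u|\|_{L^1}\le C\bigl(\|\rho\dot u\|_{L^2}+\|\nabla u\|_{L^2}+\|P-\bar P\|_{L^2}\bigr)\|\nabla u\|_{L^2}^2 ,
\end{align*}
where I used Lemma \ref{l24}. The interior pieces are controlled by $C\|\nabla u\|_{L^3}^3$ or by $C\|P-\bar P\|_{L^2}\|\nabla u\|_{L^2}$-type products.

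Altogether I obtain a differential inequality of the form
\begin{align*}
\frac{d}{dt}\bigl(\sigma^m\mathcal E\bigr)+\tfrac12\sigma^m\|\sqrt\rho\dot u\|_{L^2}^2\le C\sigma^m\bigl(\|\nabla u\|_{L^3}^3+\|\nabla u\|_{L^2}^2\bigr)+C\,m\,\sigma'\|\nabla u\|_{L^2}^2+\dots
\end{align*}
Here $\mathcal E\sim\|\nabla u\|_{L^2}^2$ up to an error of order $\|P-\bar P\|_{L^2}^2$, which is absorbed using \eqref{basic1}. With $m=1$, integrating in time and invoking \eqref{basic1} gives \eqref{I02} directly.

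For \eqref{xxxx1} with $m=0$, I must bound the cubic term. By \eqref{f12} with $p=3$,
\begin{align*}
\|\nabla u\|_{L^3}^3\le C\|\rho\dot u\|_{L^2}^{3/2}D^{3/4}+CD^{3/2}\le\delta\|\sqrt\rho\dot u\|_{L^2}^2+C\|\nabla u\|_{L^2}^6+C\|P-\bar P\|_{L^2}^2+\dots,
\end{align*}
where I used $\rho\le2\rho_*$ and $D\le2\varepsilon$ to reduce the pressure powers to $\|P-\bar P\|_{L^2}^2$. The term $\|\nabla u\|_{L^2}^6$ is the crux. I write it as $\|\nabla u\|_{L^2}^4\cdot\|\nabla u\|_{L^2}^2$ and close by a Gronwall argument: $\exp\bigl(C\int_0^T\|\nabla u\|_{L^2}^4dt\bigr)$ is bounded because $\int_0^T\|\nabla u\|_{L^2}^4\,dt\le\sup_t\|\nabla u\|_{L^2}^2\int_0^T\|\nabla u\|_{L^2}^2\,dt\le2\varepsilon$ by the second bootstrap quantity in \eqref{cp1}. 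This, combined with \eqref{basic1}, gives $\sup_t\|\nabla u\|_{L^2}^2+\int_0^T\|\sqrt\rho\dot u\|_{L^2}^2dt\le CD(0)$, and together with \eqref{basic1} for the pressure part this yields \eqref{xxxx1}.

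For \eqref{fir} I build a Lyapunov functional $\mathcal L=K\mathcal C_1(t)+\mathcal E(t)$, with $K$ large and $\mathcal C_1$ from Lemma \ref{lem-basic}. Adding $K$ times \eqref{modified} to the $m=0$ inequality, the dissipation $C\|\nabla u\|_{L^2}^2+C\|P-\bar P\|_{L^2}^2+\tfrac14\|\sqrt\rho\dot u\|_{L^2}^2$ dominates $\mathcal L$. The nonlinear $\|\nabla u\|_{L^2}^6$ is absorbed since $\|\nabla u\|_{L^2}^4\le4\varepsilon^2$ is small. This gives $\mathcal L'+\eta_2\mathcal L+c\|\sqrt\rho\dot u\|_{L^2}^2\le0$, and multiplying by $e^{\eta_2t}$ and integrating concludes.

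The main obstacle is the boundary term. Handling it correctly with only $\|\nabla F\|$ and no $\nabla^2u$ information relies on identity \eqref{bian}, and one must check that every boundary contribution, including those from the $P_t$ manipulation, reduces to quantities of the type above.
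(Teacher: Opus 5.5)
Your proposal is correct and follows the paper's proof essentially step for step. Both test with $\sigma^m\dot u$, remove the derivative from the boundary trace via \eqref{bian} (your single $F$-boundary term is exactly the sum of the paper's two), bound $\|\nabla u\|_{L^3}^3$ by \eqref{f12}, close the $\|\nabla u\|_{L^2}^6$ term with the second bootstrap quantity (your Gronwall step is a harmless variant of the paper's absorption), and get \eqref{fir} from the Lyapunov functional $\Psi+C_2\mathcal C_1$.

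Two details need fixing when you write it out. First, the term $\int_\Omega u\cdot\nabla P\,\operatorname{div}u\,dx$ produced by the $P_t$ substitution cannot be controlled on its own; you must combine it with $\int_\Omega(P-\bar P)\,u\cdot\nabla\operatorname{div}u\,dx$, which comes from $-\int_\Omega F\operatorname{div}(u\cdot\nabla u)\,dx$, into $-\int_\Omega(P-\bar P)(\operatorname{div}u)^2\,dx$. Second, in your trace bound the middle piece should read $\|F\|_{L^3}\|u\|_{L^6}\|\nabla u\|_{L^2}$ rather than $\|F\|_{L^2}\,\||u|\,|\nabla u|\|_{L^1}$; your final bound is unaffected.
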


\begin{proof}
We rewrite the momentum equation \eqref{a1}$_2$ as
	\begin{align}\label{w18}
		\rho\dot u
		+
		\nabla(P-\bar P)
		=
		(2\mu+\lambda)\nabla\divv u
		-
		\mu\curl\operatorname{curl}u.
	\end{align}
Multiplying \eqref{w18} by
	$\sigma^m\dot u\ (m\geq0)$ and integrating the resultant over $\Omega$, we obtain
	\begin{align}\label{I0}
		\int_\Omega
		\sigma^m\rho|\dot u|^2\,dx
		= &
		-\int_\Omega
		\sigma^m\dot u\cdot\nabla(P-\bar P)\,dx
	 +
		(2\mu+\lambda)
		\int_\Omega
		\sigma^m\nabla\divv u\cdot\dot u\,dx
	 -
		\mu
		\int_\Omega
		\sigma^m\curl\operatorname{curl}u\cdot\dot u\,dx
		\nonumber\\
		\triangleq &
		I_1+I_2+I_3.
	\end{align}

A direct calculation gives
\begin{align}\label{I1-1}
I_1=-\int_\Omega\sigma^m u_t\cdot\nabla(P-\bar P)\,dx
-\int_\Omega\sigma^m(u\cdot\nabla u)\cdot\nabla(P-\bar P)\,dx\triangleq
I_{11}+I_{12}.
\end{align}
It follows from integration by parts that
	\begin{align}\label{I11-1}
		I_{11}
		=&
		\int_\Omega
		\sigma^m(P-\bar P)\divv u_t\,dx
		\nonumber\\
		=&
		\frac{d}{dt}
		\int_\Omega
		\sigma^m(P-\bar P)\divv u\,dx
		-
		m\sigma^{m-1}\sigma'
		\int_\Omega
		(P-\bar P)\divv u\,dx
		-
		\int_\Omega
		\sigma^m(P-\bar P)_t\divv u\,dx
		\nonumber\\
		=&
		\frac{d}{dt}
		\int_\Omega
		\sigma^m(P-\bar P)\divv u\,dx
		-
		m\sigma^{m-1}\sigma^{\prime}
		\int_\Omega
		(P-\bar P)\divv u\,dx
		\nonumber\\
		&+
		\int_\Omega
		\sigma^m
		u\cdot\nabla(P-\bar P)\divv u\,dx
		+
		\gamma
		\int_\Omega
		\sigma^mP(\divv u)^2\,dx,
	\end{align}
due to
\begin{gather*}
		(P-\bar P)_t
		+
		u\cdot\nabla(P-\bar P)
		+
		\gamma P\divv u
		-
		(\gamma-1)\overline{P\divv u}
		=
		0,\\
		-(\gamma-1)
		\overline{P\divv u}
		\int_\Omega\sigma^m\divv u\,dx
		=
		-(\gamma-1)
		\sigma^m
		\overline{P\divv u}
		\int_{\partial\Omega}u\cdot n\,dS=0.
	\end{gather*}	
Moreover, one derives from integration by parts and \eqref{bian} that
	\begin{align}\label{I12}
		I_{12}
		=&
		\int_\Omega
		\sigma^m(P-\bar P)
		\divv(u\cdot\nabla u)\,dx
		-
		\int_{\partial\Omega}
		\sigma^m(P-\bar P)
		(u\cdot\nabla u)\cdot n\,dS
		\nonumber\\
		=&
		\int_\Omega
		\sigma^m(P-\bar P)
		\nabla u:(\nabla u)^{T}\,dx
		+
		\int_\Omega
		\sigma^m(P-\bar P)
		u\cdot\nabla\divv u\,dx
		+
		\int_{\partial\Omega}
		\sigma^m(P-\bar P)
		u\cdot\nabla n\cdot u\,dS.
	\end{align}
	Notice that
	\begin{align*}
		\int\sigma^m
		u\cdot\nabla(P-\bar P)\divv u\,dx
		+
		\int\sigma^m
		(P-\bar P)u\cdot\nabla\divv u\,dx
		&=
		\int\sigma^m
		u\cdot\nabla\big((P-\bar P)\divv u\big)\,dx
		\nonumber\\
		&=
		-\int\sigma^m
		(P-\bar P)(\divv u)^2dx,
	\end{align*}	
	then, from the trace theorem, the boundary integral in \eqref{I12} can be estimated as
	\begin{align*}
		\left|
		\int_{\partial\Omega}
		\sigma^m(P-\bar P)
		u\cdot\nabla n\cdot u\,dS
		\right|
		\leq
		C\sigma^m
		\|u\|_{L^2(\partial\Omega)}^2
		\leq
		C\sigma^m
		\|\nabla u\|_{L^2}^2,
	\end{align*}
which combined with \eqref{I1-1}--\eqref{I12} implies
	\begin{align}\label{I11}
		I_1
		\leq&
		\frac{d}{dt}
		\int_\Omega
		\sigma^m(P-\bar P)\divv u\,dx
		+
		Cm\sigma^{m-1}\sigma^{\prime}
		\left(
		\|P-\bar P\|_{L^2}^2
		+
		\|\nabla u\|_{L^2}^2
		\right)
		+
		C\sigma^m
		\|\nabla u\|_{L^2}^2.
	\end{align}
	
We next estimate $I_2$. Integration by parts shows that
\begin{align}\label{I20}
I_2=&(2\mu+\lambda)\int_{\partial\Omega}
\sigma^m\divv u\,\dot u\cdot n\,dS-(2\mu+\lambda)
\int_\Omega\sigma^m\divv u\,\divv\dot u\,dx
\nonumber\\
=&(2\mu+\lambda)\int_{\partial\Omega}\sigma^m\divv u\,(u\cdot\nabla u)\cdot n\,dS-\frac{2\mu+\lambda}{2}\frac{d}{dt}\int_\Omega
		\sigma^m(\divv u)^2\,dx+
		\frac{2\mu+\lambda}{2}
		\int_\Omega
		\sigma^m(\divv u)^3\,dx
		\nonumber\\[2mm]
		&-
		(2\mu+\lambda)
		\int_\Omega
		\sigma^m\divv u\,
		\nabla u:(\nabla u)^T\,dx
		+
		\frac{m(2\mu+\lambda)}{2}
		\sigma^{m-1}\sigma'
		\int_\Omega
		(\divv u)^2\,dx.
	\end{align}
	For the boundary term, we obtain from \eqref{bian}, \eqref{ff8}, and
$\divv u=\frac{F+P-\bar P}{2\mu+\lambda}$ that
	\begin{align}\label{3.42}
		 (2\mu+\lambda)
		\int_{\partial\Omega}
		\sigma^m\divv u\,
		(u\cdot\nabla u)\cdot n\,dS
		& =
		-\int_{\partial\Omega}
		\sigma^mF
		u\cdot\nabla n\cdot u\,dS
		-
		\int_{\partial\Omega}
		\sigma^m(P-\bar P)
		u\cdot\nabla n\cdot u\,dS
		\nonumber\\[1mm]
		& \leq
		C\sigma^m
		\|\nabla F\|_{L^2}
		\|\nabla u\|_{L^2}^2
		+
		C\sigma^m
		\|\nabla u\|_{L^2}^2
		\nonumber\\[1mm]
		& \leq
		\frac12
		\sigma^m
		\|\sqrt\rho\dot u\|_{L^2}^2
		+
		C\sigma^m
		\|\nabla u\|_{L^2}^4
		+
		C\sigma^m
		\|\nabla u\|_{L^2}^2.
	\end{align}
This together with \eqref{I20} yields
	\begin{align}\label{I2}
		I_2
		\leq &
		-\frac{2\mu+\lambda}{2}
		\frac{d}{dt}\int_\Omega
		\sigma^m(\divv u)^2\,dx
		+
		\frac12
		\sigma^m
		\|\sqrt\rho\dot u\|_{L^2}^2
	+
		C\sigma^m
		\|\nabla u\|_{L^3}^3
			\nonumber\\[2mm]
		&+
		C\sigma^m
		\|\nabla u\|_{L^2}^4
		+
		C\sigma^m
		\|\nabla u\|_{L^2}^2
		 +
		Cm\sigma^{m-1}\sigma'
		\|\nabla u\|_{L^2}^2.
	\end{align}
	
	For $I_3$, using the boundary condition
	$(\operatorname{curl}u\times n)\big|_{\partial\Omega}=0$ and
	\begin{align*}
		\curl(u\cdot\nabla u)
		=
		u^i\partial_i\operatorname{curl}u
		+
		\nabla u^i\times\partial_i u,
	\end{align*}
	we have
	\begin{align}\label{I3}
		I_3
		= &
		-\mu
		\int_\Omega
		\sigma^m\operatorname{curl}u\cdot\curl\dot u\,dx
		\nonumber\\
		= &
		-\frac{\mu}{2}\frac{d}{dt}
		\int_\Omega
		\sigma^m|\operatorname{curl}u|^2\,dx
		+
		\frac{\mu m}{2}
		\sigma^{m-1}\sigma'
		\int_\Omega|\operatorname{curl}u|^2\,dx
		 \notag\\
		 &\quad-
		\mu
		\int_\Omega
		\sigma^m
		(\nabla u^i\times\partial_i u)\cdot\operatorname{curl}u\,dx
		+
		\frac{\mu}{2}
		\int_\Omega
		\sigma^m|\operatorname{curl}u|^2\divv u\,dx
		\nonumber\\
		\leq &
		-\frac{\mu}{2}\frac{d}{dt}
		\int_\Omega
		\sigma^m|\operatorname{curl}u|^2\,dx
		+
		Cm\sigma^{m-1}\sigma'
		\|\nabla u\|_{L^2}^2
		+
		C\sigma^m
		\|\nabla u\|_{L^3}^3.
	\end{align}	
Consequently, substituting \eqref{I11}, \eqref{I2}, and \eqref{I3} into
\eqref{I0}, we arrive at
	\begin{align}\label{I01}
		&\frac{d}{dt}\left[
		(2\mu+\lambda)
		\int_\Omega
		\sigma^m(\divv u)^2\,dx
		+
		\mu
		\int_\Omega
		\sigma^m|\operatorname{curl}u|^2\,dx
		-
		2
		\int_\Omega
		\sigma^m(P-\bar P)\divv u\,dx
		\right]
		+
		\int_\Omega
		\sigma^m\rho|\dot u|^2\,dx
		\nonumber\\[2mm]
		&\leq
		Cm\sigma^{m-1}\sigma'
		\left(
		\|P-\bar P\|_{L^2}^2
		+
		\|\nabla u\|_{L^2}^2
		\right)
		+
		C\sigma^m
		\left(
		\|\nabla u\|_{L^2}^2
		+
		\|\nabla u\|_{L^3}^3
		+
		\|\nabla u\|_{L^2}^4
		\right).
	\end{align}
	
Taking $m=0$, one deduces from \eqref{I01} that
	\begin{align}\label{w30}
\frac{d}{dt}\Psi(t)+
		\|\sqrt\rho\dot u\|_{L^2}^2
		\leq
		C
		\left(
		\|\nabla u\|_{L^3}^3
		+
		\|\nabla u\|_{L^2}^4
		+
		\|\nabla u\|_{L^2}^2
		\right),
	\end{align}
	where
	\begin{align*}
		\Psi(t)
		\triangleq
		\int_\Omega
		\left[
		(2\mu+\lambda)(\divv u)^2
		+
		\mu|\operatorname{curl}u|^2
		-
		2(P-\bar P)\divv u
		\right](t)\, dx.
	\end{align*}
To control the cubic term in \eqref{w30}, it follows from \eqref{f12} that
	\begin{align}\label{w31}
		\|\nabla u\|_{L^3}^3
		\leq &
		C
		\|\sqrt\rho\dot u\|_{L^2}^{3/2}
		\left(
		\|\nabla u\|_{L^2}
		+
		\|P-\bar P\|_{L^2}
		\right)^{3/2}
		 +
		C
		\left(
		\|\nabla u\|_{L^2}^3
		+
		\|P-\bar P\|_{L^3}^3
		\right)
		\nonumber\\[2mm]
		\leq &
		\frac12
		\|\sqrt\rho\dot u\|_{L^2}^2
		+
		C
		\left(
		\|\nabla u\|_{L^2}^2
		+
		\|\nabla u\|_{L^2}^6
		+
		\|P-\bar P\|_{L^2}^2
		\right).
	\end{align} 	
Thus, inserting \eqref{w31} into \eqref{w30} leads to
	\begin{align}\label{lwz3}
		\frac{d}{dt}\Psi(t)
		+
		\frac12
		\|\sqrt\rho\dot u\|_{L^2}^2
		\leq
		C
		\left(
		\|\nabla u\|_{L^2}^2
		+
		\|\nabla u\|_{L^2}^6
		+
		\|P-\bar P\|_{L^2}^2
		\right).
	\end{align}
Define
\begin{align*}
	\mathcal C_2(t)
	\triangleq
	\Psi(t)
	+
	C_2\mathcal C_1(t),
\end{align*}
where $C_2>0$ is to be chosen.
	By \eqref{modi1},
	Poincar\'e's inequality, and Young's inequality,
	we can choose $C_2$ sufficiently large so that
	\begin{align}\label{Phi-equivalence}
		\mathcal C_2(t)
		\sim
		D(t).
	\end{align}
Multiplying \eqref{modified} by a sufficiently large $C_2$ and adding the resultant to \eqref{lwz3}, we derive
\begin{align}\label{lwz4}
\frac{d}{dt}\mathcal C_2(t)+C\left(\|\nabla u\|_{L^2}^2
+\|P-\bar P\|_{L^2}^2+\|\sqrt\rho\dot u\|_{L^2}^2\right)
\leq
C\|\nabla u\|_{L^2}^6.
\end{align}
Integrating \eqref{lwz4} over $[0,T]$ together with
\eqref{basic1} and \eqref{Phi-equivalence} yields
	\begin{align}\label{lwz5}
	\sup_{0\leq t\leq T}
D(t)+\int_0^T\|\sqrt\rho\dot u \|_{L^2}^2\,dt
&\leq
C D(0)+C\int_0^T\|\nabla u \|_{L^2}^6\,dt
		\nonumber\\
		&\leq
		C D(0)
		+C\sup_{0\leq t\leq T}
		\|\nabla u(t)\|_{L^2}^2
		\left[
		\sup_{0\leq t\leq T}
	\|\nabla u(t)\|_{L^2}^2
		\int_0^T
		\|\nabla u \|_{L^2}^2\,dt
		\right].
	\end{align}
	By the bootstrap assumption \eqref{cp1}$_{2}$, we may choose
	$\varepsilon>0$ sufficiently small such that
	\begin{align*}
		C
		\sup_{0\leq t\leq T}
		\|\nabla u(t)\|_{L^2}^2
		\int_0^T
		\|\nabla u \|_{L^2}^2\,dt
		\leq
		2C\varepsilon
		\leq
		\frac12,
	\end{align*}
which combined with \eqref{lwz5} gives \eqref{xxxx1}.
	
We next take $m=1$ in \eqref{I01}. Using Young's inequality, \eqref{cp1}, \eqref{basic1}, and \eqref{xxxx1}, we obtain
\begin{align*}
	A_1(T)&\leq C\sup_{0\leq t\leq T}\left( \|\nabla u(t)\|_{L^2}^2+\|\left( P-\bar P\right) (t)\|_{L^2}^2\right) +
		C
		\int_0^{T}
		\left(
		\|  P-\bar P \|_{L^2}^2
		+
		\|\nabla u \|_{L^2}^2
		\right)dt
		\nonumber\\
		&\quad+
		C
		\int_0^T
		\sigma
		\left(
		\|\nabla u \|_{L^2}^2
		+
		\|\nabla u \|_{L^3}^3
		+
		\|\nabla u \|_{L^2}^4
		\right)dt\notag\\
		&\leq
		C D(0)
		+
		C
		\int_0^T
		\sigma
		\|\nabla u \|_{L^3}^3\,dt,
	\end{align*}
proving \eqref{I02}.
	
It remains to show \eqref{fir}. By the bootstrap
assumption \eqref{cp1}$_2$ and choosing $\varepsilon>0$ sufficiently small such that
\begin{align*}
	C\|\nabla u\|_{L^2}^6
	\leq
	C(2\varepsilon)^2\|\nabla u\|_{L^2}^2
	\leq
	\frac C2  \|\nabla u\|_{L^2}^2,
\end{align*}
which together with \eqref{lwz4} indicates that
\begin{align}\label{cm2-decay-local}
	\frac{d}{dt}\mathcal C_2(t)
	+
CD(t)
	+
	C\|\sqrt\rho \dot u\|_{L^2}^2
	\leq0.
\end{align}
By \eqref{Phi-equivalence}, we may choose $\eta_2>0$ sufficiently small
such that
\begin{align}\label{cm2-decay}
	\frac{d}{dt}\mathcal C_2(t)
	+
	\eta_2\mathcal C_2(t)
	+
C\|\sqrt\rho \dot u\|_{L^2}^2
	\leq0.
\end{align}
Multiplying \eqref{cm2-decay} by $e^{\eta_2t}$ and integrating the resultant over
$[0,T]$, we have
\begin{align*}
\sup_{0\leq t\leq T}
\left( e^{\eta_2t}\mathcal C_2(t)\right)
+
C\int_0^T
e^{\eta_2t}
\|\sqrt\rho \dot u \|_{L^2}^2\,dt
\leq
\mathcal C_2(0)
\leq
CD(0).
\end{align*}
This along with \eqref{Phi-equivalence} yields the desired \eqref{fir}.
\end{proof}

Next, we show that the term $\int_0^T\sigma\|\nabla u\|_{L^3}^3\,dt$
in \eqref{I02} can be controlled by the bootstrap quantity $A_1(T)$ with the aid of the smallness assumption in \eqref{cp1}.
\begin{lemma}\label{Aclosed}
Assume that \eqref{cp1} holds. Then
	\begin{align}\label{A1-final-closed}
		A_1(T)
		\leq
		C D(0).
	\end{align}
\end{lemma}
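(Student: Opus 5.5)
By \eqref{I02}, it suffices to show that
\begin{align*}
C\int_0^T\sigma\|\nabla u\|_{L^3}^3\,dt\le \frac12 A_1(T)+CD(0).
\end{align*}
The idea is to estimate the cubic term pointwise in time using \eqref{f12} with $p=3$, but to keep the time weight $\sigma$ and spend the smallness from \eqref{cp1} on the factor multiplying $\|\sqrt\rho\dot u\|_{L^2}^2$. This differs from \eqref{w31}, where that factor was absorbed by Young's inequality at the cost of $\|\nabla u\|_{L^2}^6$.

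Since $\rho\le 2\rho_*$, we have $\|\rho\dot u\|_{L^2}\le C\|\sqrt\rho\dot u\|_{L^2}$. Moreover $\|P-\bar P\|_{L^3}^3\le C\|P-\bar P\|_{L^2}^2$ because $P$ is bounded. Hence \eqref{f12} gives
\begin{align*}
\sigma\|\nabla u\|_{L^3}^3\le C\sigma\|\sqrt\rho\dot u\|_{L^2}^{3/2}\big(\|\nabla u\|_{L^2}+\|P-\bar P\|_{L^2}\big)^{3/2}+C\sigma\big(\|\nabla u\|_{L^2}^3+\|P-\bar P\|_{L^2}^2\big).
\end{align*}
The second group is the easy part. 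By \eqref{cp1}, $\|\nabla u\|_{L^2}^3\le C\varepsilon^{1/2}\|\nabla u\|_{L^2}^2$, so \eqref{basic1} bounds its time integral by $CD(0)$.

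For the first term, write $X=\|\nabla u\|_{L^2}+\|P-\bar P\|_{L^2}$, so that $X^2\le 2D(t)\le 4\varepsilon$. Apply Young's inequality with exponents $4/3$ and $4$:
\begin{align*}
C\sigma\|\sqrt\rho\dot u\|_{L^2}^{3/2}X^{3/2}\le \delta\,\sigma\|\sqrt\rho\dot u\|_{L^2}^2+C_\delta\,\sigma X^{6}.
\end{align*}
Then $\sigma X^6\le C\varepsilon^2 X^2$, and by \eqref{basic1}
\begin{align*}
\int_0^T\sigma X^6\,dt\le C\varepsilon^2\int_0^T\big(\|\nabla u\|_{L^2}^2+\|P-\bar P\|_{L^2}^2\big)\,dt\le CD(0).
\end{align*}
Choosing $\delta$ small, with $C\delta\le 1/2$ relative to the constant in \eqref{I02}, lets us absorb $\delta\int_0^T\sigma\|\sqrt\rho\dot u\|_{L^2}^2\,dt$ into the left side of \eqref{I02}. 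This yields $A_1(T)\le CD(0)$.

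The point needing care is the constants. Both $\delta$ and $C_\delta$ depend only on the fixed parameters, and since $\sup_t D(t)\le 2\varepsilon$ enters only polynomially, no constant depends on $\|\nabla u_0\|_{L^2}$. Absorbing into $A_1(T)$ is legitimate because $A_1(T)<\infty$ for a strong solution; indeed \eqref{xxxx1} already gives $A_1(T)\le CD(0)$ without the weight.

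In fact, \eqref{xxxx1} alone almost suffices: it bounds $\int_0^T\|\sqrt\rho\dot u\|_{L^2}^2\,dt$, and $\sigma\le 1$. So a shorter route is to bound $\int_0^T\sigma\|\nabla u\|_{L^3}^3\,dt$ directly by \eqref{w31} together with \eqref{xxxx1}, \eqref{basic1}, and $\|\nabla u\|_{L^2}^6\le C\varepsilon^2\|\nabla u\|_{L^2}^2$. I would present this shorter route as the proof, with the Young's-inequality absorption above as a fallback. I do not expect a genuine obstacle here.
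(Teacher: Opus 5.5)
Your proof is correct, and it handles the cross term differently from the paper. The paper also reduces to $\int_0^T\sigma\|\nabla u\|_{L^3}^3\,dt$ via \eqref{I02} and \eqref{f12}. It then bounds $H_1=C\int_0^T\sigma\|\sqrt\rho\dot u\|_{L^2}^{3/2}\|\nabla u\|_{L^2}^{3/2}\,dt$ by H\"older's inequality in time, against $A_1(T)$ itself and against the bootstrap product $\sup_{t}\|\nabla u\|_{L^2}^2\int_0^T\|\nabla u\|_{L^2}^2\,dt\le 2\varepsilon$. This gives $H_1\le C\varepsilon^{1/4}A_1(T)$, which can be absorbed only after a further smallness restriction on $\varepsilon$. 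The remaining pieces $H_2,H_3$ are handled by $\delta$-Young against $A_1(T)$.

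Your pointwise Young inequality uses only $\sup_t D(t)\le 2\varepsilon$ to turn $X^6$ into $C\varepsilon^2X^2$, and it absorbs with a $\delta$ depending only on the fixed parameters. In effect it is \eqref{w31} with the weight $\sigma$ retained, so it needs no new smallness.

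Your closing remark undersells itself. \eqref{xxxx1} does not ``almost'' suffice; it suffices outright. Since $\sigma\le 1$, it gives $A_1(T)\le\sup_{0\le t\le T}D(t)+\int_0^T\|\sqrt\rho\dot u\|_{L^2}^2\,dt\le CD(0)$, with no appeal to \eqref{I02}, \eqref{w31}, or any absorption. This one line is the proof you should present.

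The time-weighted $A_1(T)$ comes from the small-energy framework. There $\|\nabla u_0\|_{L^2}$ is merely bounded, and the weight is what produces smallness. Here $D(0)$ already contains $\|\nabla u_0\|_{L^2}^2$, so for this lemma the weighted argument yields nothing that \eqref{xxxx1} does not already give.
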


\begin{proof}
	By \eqref{basic1}, \eqref{I02},  and \eqref{w31},
	we have
	\begin{align}\label{A1}
		A_1(T)
		&\leq
		C D(0)
		+
		C\int_0^T
		\sigma
		\|\sqrt\rho \dot u \|_{L^2}^{3/2}
		\|\nabla u \|_{L^2}^{3/2}\,dt
		+
		C\int_0^T
		\sigma
		\|\sqrt\rho \dot u \|_{L^2}^{3/2}
		\|  P-\bar P \|_{L^2}^{3/2}\,dt	\nonumber\\
		&\quad
		+
		C\int_0^T
		\sigma
		\|\nabla u \|_{L^2}^3\,dt
		+
		C\int_0^T
		\sigma
		\|  P-\bar P \|_{L^3}^3\,dt\notag\\
		&\leq
		C D(0)
		+
		C\int_0^T
		\sigma
		\|\sqrt\rho \dot u \|_{L^2}^{3/2}
		\|\nabla u \|_{L^2}^{3/2}\,dt+
		C\int_0^T
		\sigma
		\|\sqrt\rho \dot u \|_{L^2}^{3/2}
		\|  P-\bar P \|_{L^2}^{3/2}\,dt
		+
		C\int_0^T
		\sigma
		\|\nabla u \|_{L^2}^3\,dt\notag\\[2mm]
		&\triangleq
		CD(0)+H_1+H_2+H_3.
	\end{align}

Observe that
		\begin{align*}
	\sigma
	\|\sqrt\rho \dot u\|_{L^2}^{3/2}
	\|\nabla u\|_{L^2}^{3/2}
	=
	\left(
	\sigma\|\sqrt\rho \dot u\|_{L^2}^2
	\right)^{3/4}
	\left(
	\sigma\|\nabla u\|_{L^2}^2
	\right)^{1/4}
	\|\nabla u\|_{L^2},
\end{align*}
hence it follows from the definition of $A_1(T)$, H\"older's inequality, and \eqref{cp1} that
	\begin{align}\label{A11}
		H_1
		&\leq
		C
		\left(
		\sup_{0\leq t\leq T}
		\sigma(t)\|\nabla u(t)\|_{L^2}^2
		\right)^{1/4}
		\left(
		\int_0^T
		\sigma
		\|\sqrt\rho \dot u\|_{L^2}^2\,dt
		\right)^{3/4}
		\left(
		\int_0^T
		\|\nabla u\|_{L^2}^4\,dt
		\right)^{1/4}\notag\\
		&\leq
		C
		\left(
		\sup_{0\leq t\leq T}
		\sigma(t)\|\nabla u(t)\|_{L^2}^2
		\right)^{1/4}
		\left(
		\int_0^T
		\sigma
		\|\sqrt\rho \dot u\|_{L^2}^2\,dt
		\right)^{3/4}\left(
		\sup_{0\leq t\leq T}
		\|\nabla u(t)\|_{L^2}^2
		\int_0^T
		\|\nabla u \|_{L^2}^2\,dt\right)^{1/4}\notag\\[2mm]
		&\leq
		C A_1(T)\varepsilon^{1/4}.
	\end{align}	
Moreover, from \eqref{basic1}, H\"older's inequality, and Young's inequality, one has that, for any $\delta>0$,
	\begin{align}\label{A12}
		H_2
		&\leq
		C
		\left(
		\int_0^T
		\sigma
		\|\sqrt\rho \dot u\|_{L^2}^2\,dt
		\right)^{3/4}
		\left(
		\int_0^T
		\sigma
		\|P-\bar P\|_{L^2}^6\,dt
		\right)^{1/4}
		\nonumber\\
		&\leq
		C
		A_1(T)^{3/4}
		\left(
		\sup_{0\leq t\leq T}
		\|(P-\bar P)(t)\|_{L^2}^2
		\right)^{1/2}
		\left(
		\int_0^T
		\|P-\bar P\|_{L^2}^2\,dt
		\right)^{1/4}\notag\\[2mm]
		&\leq
	    C A_1(T)^{3/4}D(0)^{3/4}\leq
		\delta A_1(T)
		+
		C  D(0)^3.
	\end{align}
Similarly, one gets that
	\begin{align}\label{A13}
		H_3
		&=
		C\int_0^T
		\left(
		\sigma\|\nabla u\|_{L^2}^2
		\right)^{1/2}
		\sigma^{1/2}
		\|\nabla u\|_{L^2}^2\,dt
		\nonumber\\
		&\leq
		C
		\left(
		\sup_{0\leq t\leq T}
		\sigma(t)\|\nabla u(t)\|_{L^2}^2
		\right)^{1/2}
		\int_0^T
		\|\nabla u\|_{L^2}^2\,dt\notag\\[2mm]
	&\leq
	C D(0)A_1(T)^{1/2}\leq
		\delta A_1(T)
		+
		C  D(0)^2.
	\end{align}
Combining \eqref{A1}--\eqref{A13}, choosing $\delta>0$ sufficiently small,
and using $D(0)<1$, we obtain
	\begin{align*}
		A_1(T)
		&\leq
		CD(0)
		+
		C\varepsilon^{1/4}A_1(T)
		+
		\frac12A_1(T)
		+
		C\bigl(D(0)^2+D(0)^3\bigr)
		\notag\\
		&\leq
		CD(0)
		+
		\left(
		C\varepsilon^{1/4}
		+
		\frac12
		\right)A_1(T),
	\end{align*}
which yields \eqref{A1-final-closed} as long as $\varepsilon$ is chosen sufficiently small.
\end{proof}

To establish the uniform upper bound of the density via Zlotnik's
inequality, it is essential to obtain an estimate for $\nabla\dot u$.
The main difficulty comes from the boundary behavior of the material derivative. In fact, although $u$ satisfies the slip boundary condition, $\dot u$ does not inherit the corresponding boundary structure. Therefore, a boundary correction term is
 introduced to compensate for the non-vanishing boundary contribution. The following lemma provides the required estimate.
\begin{lemma}\label{lemb}
Assume that \eqref{cp1} holds. Then
	\begin{align}\label{mB1}
		B_1(T)
		=
		\sup_{0\leq t\leq T}
		\left( \sigma \|\sqrt\rho\dot u(t)\|_{L^2}^2\right)
		+
		\int_0^T
		\sigma \|\nabla\dot u \|_{L^2}^2\,dt
		\leq
		C D(0).
	\end{align}
	Moreover, if $T\geq1$, then there exists a constant $\eta_3>0$ such that
	\begin{align}\label{w41}
		\sup_{1\leq t\leq T}
		\left( e^{\eta_3t}
		\|\sqrt\rho\dot u(t)\|_{L^2}^2\right)
		+
		\int_1^T
		e^{\eta_3t}
		\|\nabla\dot u \|_{L^2}^2\,dt
		\leq
		C D(0).
	\end{align}
\end{lemma}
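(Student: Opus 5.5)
We follow the standard Hoff-type argument, as in \cite{HLX12,CL23}. Write \eqref{a1}$_2$ as $\rho\dot u=\nabla F-\mu\nabla\times\curl u$. Apply $\sigma^m\dot u^j\bigl(\partial_t+\divv(u\,\cdot)\bigr)$ to the $j$-th component, sum over $j$ and integrate over $\Omega$. The transport structure gives
\begin{align*}
\frac12\frac{d}{dt}\int_\Omega\sigma^m\rho|\dot u|^2\,dx-\frac m2\sigma^{m-1}\sigma'\int_\Omega\rho|\dot u|^2\,dx
=\int_\Omega\sigma^m\dot u\cdot\bigl(\nabla F_t+\divv(u\otimes\nabla F)\bigr)dx-\mu\int_\Omega\sigma^m\dot u\cdot\bigl(\nabla\times\curl u_t+\divv(u\otimes\nabla\times\curl u)\bigr)dx
\triangleq J_1+J_2 .
\end{align*}

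For $J_1$ we integrate by parts. Since $\dot u\cdot n\neq0$, this produces the boundary term $\int_{\partial\Omega}\sigma^mF_t\,\dot u\cdot n\,dS$. We then use $F_t=(2\mu+\lambda)\divv u_t-(P-\bar P)_t$ together with the transport equation for $P$. In the interior this yields
\[
-(2\mu+\lambda)^{-1}\sigma^m\|F\text{-part}\|^2\sim-\tfrac{1}{2\mu+\lambda}\sigma^m\|\divv\dot u\|_{L^2}^2
\]
plus cubic remainders of the form $\sigma^m\|\nabla\dot u\|_{L^2}\bigl(\|\nabla u\|_{L^4}^2+\|P-\bar P\|_{L^4}\|\nabla u\|_{L^4}\bigr)$.

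The boundary term is the main obstacle. Following the strategy section, we use $u\cdot n=0$ to write $\dot u\cdot n=-u\cdot\nabla n\cdot u$ on $\partial\Omega$. Then
\[
\int_{\partial\Omega}\sigma^mF_t\,\dot u\cdot n\,dS=-\frac{d}{dt}\int_{\partial\Omega}\sigma^m F\,u\cdot\nabla n\cdot u\,dS+\text{l.o.t.}+\int_{\partial\Omega}\sigma^mF\,h\cdot u_t\,dS,
\]
with $h=(\nabla n+(\nabla n)^{\mathsf T})u$. Writing $u_t=\dot u-u\cdot\nabla u$ gives the two terms in \eqref{bo11}.
\begin{itemize}
\item For $\int_{\partial\Omega}\sigma^mFh\cdot\dot u\,dS$, the trace theorem bounds it by $C\sigma^m\|F\|_{H^1}\|u\|_{H^1}\|\dot u\|_{H^1}$, using \eqref{ff8} and \eqref{d11}.
\item For the term containing $u\cdot\nabla u$, we write $u=u^\perp\times n$ via \eqref{hi}. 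The tangential derivative $u\cdot\nabla=(u^\perp\times n)\cdot\nabla$ can then be moved off by the divergence theorem, using $\int_{\partial\Omega}(\nabla f\times\nabla g)\cdot n\,dS$-type identities or Stokes on the closed surface. This converts the boundary integral into volume integrals with at most one derivative on each factor, bounded by $C\sigma^m\bigl(\|\nabla F\|_{L^2}\|\nabla u\|_{L^2}^2\|\nabla u\|_{L^4}+\dots\bigr)$.
\end{itemize}
The $\frac{d}{dt}$ boundary term is moved to the left. At $t$ fixed it is bounded by $C\sigma^m\|\nabla F\|_{L^2}\|\nabla u\|_{L^2}^2\le\frac14\sigma^m\|\sqrt\rho\dot u\|_{L^2}^2+C\sigma^m\|\nabla u\|_{L^2}^4$.

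For $J_2$ we use $\curl u\times n=0$ on $\partial\Omega$, so integration by parts gives no boundary term for the curl part. The result is $-\mu\sigma^m\|\curl\dot u\|_{L^2}^2+C\sigma^m\|\nabla\dot u\|_{L^2}\|\nabla u\|_{L^4}^2$.

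Adding, and using \eqref{d12} to convert $\|\divv\dot u\|^2+\|\curl\dot u\|^2$ into $c\|\nabla\dot u\|_{L^2}^2-C\|\nabla u\|_{L^4}^4$, we take $m=1$ and obtain
\[
\frac{d}{dt}\Bigl(\sigma\|\sqrt\rho\dot u\|_{L^2}^2-\mathcal{R}(t)\Bigr)+c\,\sigma\|\nabla\dot u\|_{L^2}^2\le C\|\sqrt\rho\dot u\|_{L^2}^2+C\sigma\bigl(\|\nabla u\|_{L^4}^4+\|P-\bar P\|_{L^4}^4+\|\nabla u\|_{L^2}^4+\|\nabla u\|_{L^2}^2\bigr)+\dots
\]
By \eqref{f12} with $p=4$,
\[
\|\nabla u\|_{L^4}^4\le C\|\sqrt\rho\dot u\|_{L^2}^3\bigl(\|\nabla u\|_{L^2}+\|P-\bar P\|_{L^2}\bigr)+CD(t)^2+C\|P-\bar P\|_{L^4}^4 .
\]
Since $\rho\le 2\rho_*$, we have $\|P-\bar P\|_{L^4}^4\le C\|P-\bar P\|_{L^2}^2$. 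After integrating in time, the leading term is bounded by
\[
C\sup_t\bigl(\sigma\|\sqrt\rho\dot u\|_{L^2}^2\bigr)^{1/2}\,\sup_t D^{1/2}\int_0^T\|\sqrt\rho\dot u\|_{L^2}^2\,dt\le C\varepsilon^{1/2}B_1(T)^{1/2}D(0).
\]
This is absorbed by Young's inequality. Combined with \eqref{xxxx1}, \eqref{basic1} and Lemma~\ref{Aclosed}, this gives $B_1(T)\le CD(0)$, i.e.\ \eqref{mB1}.

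For \eqref{w41}, on $[1,T]$ we have $\sigma=1$. Multiply the same differential inequality by $e^{\eta_3t}$ with $\eta_3<\eta_2$ small. The term $\eta_3e^{\eta_3t}\|\sqrt\rho\dot u\|_{L^2}^2$ and all right-hand terms are integrable with bound $CD(0)$ by \eqref{fir} and \eqref{w1}, and the value at $t=1$ is controlled by \eqref{mB1}. Integrating over $[1,T]$ gives \eqref{w41}.
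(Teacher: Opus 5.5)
Your proposal follows the paper's proof essentially step for step. It uses the same operator $\sigma^m\dot u^j(\partial_t+\divv(u\,\cdot))$, the same conversion of $\int_{\partial\Omega}\sigma^mF_t\,\dot u\cdot n\,dS$ via $\dot u\cdot n=-u\cdot\nabla n\cdot u$, $h$ and $u=u^\perp\times n$, the same curl estimate for $J_2$, and the same smallness absorption at $m=1$. The only differences are cosmetic: you absorb $\sigma\|\sqrt\rho\dot u\|_{L^2}^3D^{1/2}$ through $B_1(T)^{1/2}$ rather than bounding it by $\sigma\|\sqrt\rho\dot u\|_{L^2}^4$, and you get \eqref{w41} by weighting with $e^{\eta_3t}$ and using \eqref{fir}, \eqref{w1} as forcing rather than building $\mathcal C_3$.

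One small caution concerns the transport pair $\int\sigma^m u\cdot\nabla F\,\divv\dot u\,dx-\int\sigma^m u^i\partial_i\dot u^j\partial_jF\,dx$. Bound it directly by $C\sigma^m\|u\|_{L^6}\|\nabla F\|_{L^3}\|\nabla\dot u\|_{L^2}$, as the paper does. The whole-space integration by parts behind your stated remainder (of the form $\|F\|_{L^4}\|\nabla u\|_{L^4}\|\nabla\dot u\|_{L^2}$) creates an extra boundary term $\int_{\partial\Omega}\sigma^mF(u\cdot\nabla\dot u)\cdot n\,dS$ here, because $\dot u\cdot n\neq0$.
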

 \begin{proof}
 	By the definition of the effective viscous flux, we rewrite the momentum equation \eqref{a1}$_2$ as
 	\begin{align}\label{3.41}
 	\rho\dot u
 	=
 	\nabla F
 	-
 	\mu\curl \operatorname{curl}u.
 \end{align}
 Let $m\geq0$. Applying the operator
$ \partial_t+\operatorname{div}(u\,\cdot)
$ to \eqref{3.41}, multiplying the resultant $j$-th equation by
 $\sigma^m\dot u^j$, and summing over $j$, we
 obtain that
 \begin{align}\label{J0}
 	&\frac{d}{dt}\left(
 	\frac{\sigma^m}{2}
 	\int_\Omega\rho|\dot u|^2dx
 	\right)
 	-
 	\frac{m}{2}
 	\sigma^{m-1}\sigma'
 	\int_\Omega\rho|\dot u|^2dx
 	\nonumber\\
 	&=
 	\int_\Omega\sigma^m
 	\left[
 	\dot u\cdot\nabla F_t
 	+
 	\dot u^j
 	\partial_i
 	\left(
 	u^i\partial_jF
 	\right)
 	\right]dx
 	-
 	\mu
 	\int_\Omega\sigma^m
 	\left[
 	\dot u\cdot\operatorname{curl}\operatorname{curl}u_t
 	+
 	\dot u^j
 	\partial_i
 	\left(
 	u^i(\operatorname{curl}\operatorname{curl}u)^j
 	\right)
 	\right]dx
 	\nonumber\\
 	&\triangleq
 	J_1+J_2.
 \end{align}

Now we estimate $J_1$. Integrating by parts and using
$(u\cdot n)\big|_{\partial\Omega}=0$, we have
\begin{align}\label{J1-1}
	J_1
	=
	\int_{\partial\Omega}
	\sigma^mF_t\dot u\cdot n\,dS
	-
	\int_\Omega
	\sigma^mF_t\divv \dot u\,dx
	-
	\int_\Omega
	\sigma^m
	u^i\partial_i\dot u^j\partial_jF\,dx\triangleq J_1^{1}+J_1^{2}+J_1^{3},
\end{align}
due to
\begin{align*}
	\int_\Omega
	\sigma^m
	\dot u^j
	\partial_i
	\left(
	u^i\partial_jF
	\right)dx
	=
	-\int_\Omega
	\sigma^m
	u^i\partial_i\dot u^j
	\partial_jF\,dx
	+
	\int_{\partial\Omega}
	\sigma^m
	\dot u^j u^i\partial_jF n_i\,dS
	=
	-\int_\Omega
	\sigma^m
	u^i\partial_i\dot u^j
	\partial_jF\,dx.
\end{align*}
One needs to control each term $J_1^{i}\ (i=1,2,3)$. It follows from \eqref{bian} that
\begin{align*}
	\dot u\cdot n
	=
	u_t\cdot n
	+
	(u\cdot\nabla u)\cdot n
	=
	-u\cdot\nabla n\cdot u
	\qquad
	\text{on }\partial\Omega.
\end{align*}
Hence
\begin{align}
J_1^{1}
	=
	-\frac{d}{dt}
	\left(
	\sigma^m\mathcal J(t)
	\right)
	+
	m\sigma^{m-1}\sigma'
	\mathcal J(t)
	+
	\sigma^m
	\int_{\partial\Omega}
	F
	\frac{d}{dt}
	(u\cdot\nabla n\cdot u)\,dS,
	\label{boundary-time-derivative}
\end{align}
where $
	\mathcal J(t)
	\triangleq
	\int_{\partial\Omega}
	(u\cdot\nabla n\cdot u)F\,dS$.
To estimate the third term on the right-hand side of \eqref{boundary-time-derivative}, note that
\begin{align*}
	u_{t} \cdot \nabla n \cdot u+u \cdot \nabla n \cdot u_{t} =u_{t} \cdot\big[\nabla n+(\nabla n)^{\top}\big] u,
\end{align*}
which gives
\begin{align*}
	\frac{d}{dt}
	\left(
	u\cdot\nabla n\cdot u
	\right)
	 =h\cdot u_t,\quad \text{where}\quad h
	 =
	 \big(
	 \nabla n+(\nabla n)^{\mathsf T}
	 \big)u.
\end{align*}
Then we have
\begin{align}
	J_1^{1}&=
	-\frac{d}{dt}
	\left(
	\sigma^m\mathcal J(t)
	\right)
	+
	m\sigma^{m-1}\sigma'
	\mathcal J(t)+
	\int_{\partial\Omega}
	\sigma^mFh\cdot u_t\,dS\notag\\
	&=-\frac{d}{dt}
	\left(
	\sigma^m\mathcal J(t)
	\right)
	+
	m\sigma^{m-1}\sigma'
	\mathcal J(t)+\int_{\partial\Omega}
	\sigma^mFh\cdot\dot u\,dS
	-
	\int_{\partial\Omega}
	\sigma^mFh\cdot(u\cdot\nabla u)\,dS.
	\label{bo11}
\end{align}
For the estimates of the boundary terms, we fix a smooth extension of the
unit outward normal vector $n$ from $\partial\Omega$ to a neighborhood of
$\partial\Omega$ (and, if necessary, to $\overline\Omega$). Accordingly, we extend
$u^\perp$ and $h$ to the interior of $\Omega$.
It follows \eqref{hi} that
\begin{align*}
	u\cdot\nabla u
	=
	\big(
	(u^\perp\times n)\cdot\nabla
	\big)u
	\quad
	\text{on }\partial\Omega.
\end{align*}
Consequently,
\begin{align}
	-\int_{\partial\Omega}
	Fh\cdot(u\cdot\nabla u)\,dS
& =
	-\int_{\partial\Omega}
	Fh^i
	\big(
	\nabla u^i\times u^\perp
	\big)\cdot n\,dS\notag\\
 &=
	-\int_\Omega
	\divv
	\big(
	Fh^i
	\nabla u^i\times u^\perp
	\big)dx\notag\\
	&=-\int_\Omega
	\nabla u^i\times u^\perp
	\cdot\nabla(Fh^i)\,dx
	-\int_\Omega
	Fh^i
	\divv
	\big(
	\nabla u^i\times u^\perp
	\big)dx\notag\\
	&=
	-\int_\Omega
	\nabla u^i\times u^\perp
	\cdot\nabla(Fh^i)\,dx
	+
	\int_\Omega
	Fh^i
	\operatorname{curl}u^\perp\cdot\nabla u^i\,dx.
	\label{bo1}
\end{align}
Substituting \eqref{bo1} into
\eqref{bo11}, we arrive at
\begin{align}
	J_1^{1}
&=-\frac{d}{dt}
\left(
\sigma^m\mathcal J(t)
\right)
+
m\sigma^{m-1}\sigma'
\mathcal J(t)+\int_{\partial\Omega}
\sigma^mFh\cdot\dot u\,dS\notag\\
&\quad-\int_\Omega 	\sigma^m
\nabla u^i\times u^\perp
\cdot\nabla(Fh^i)\,dx
+
\int_\Omega 	\sigma^m
Fh^i
\operatorname{curl}u^\perp\cdot\nabla u^i\,dx.
\label{bo12}
\end{align}

For the boundary term in \eqref{bo12}, one deduces from the trace
theorem and Lemmas \ref{l24} and \ref{xu} that
\begin{align}
	 \left|
	\int_{\partial\Omega}
	\sigma^mFh\cdot\dot u\,dS
	\right|
	& \leq
	C\sigma^m
	\|F\|_{H^1}
	\|u\|_{H^1}
	\left(
	\|\nabla\dot u\|_{L^2}
	+
	\|\nabla u\|_{L^2}^2
	\right)
	\nonumber\\
	& \leq
	C\sigma^m
	\|\sqrt\rho\dot u\|_{L^2}
	\|\nabla u\|_{L^2}
	\left(
	\|\nabla\dot u\|_{L^2}
	+
	\|\nabla u\|_{L^2}^2
	\right)
	\nonumber\\[2mm]
	& \leq
	C\delta\sigma^m
	\|\nabla\dot u\|_{L^2}^2
	+
	C \left( \sigma^m
	\|\nabla u\|_{L^2}^2
	\|\sqrt\rho\dot u\|_{L^2}^2
	+
	\|\nabla u\|_{L^2}^4\right).
	\label{boundary1}
\end{align}
For the two volume integrals in
\eqref{bo12}, by
Lemmas \ref{l24}, \ref{xu}, H\"older's inequality, and Young's inequality, we obtain
\begin{align}
	&\left|
	\int_\Omega
	\sigma^m
	\nabla u^i\times u^\perp
	\cdot\nabla(Fh^i)\,dx
	\right|
+
	\left|
	\int_\Omega
	\sigma^m
	Fh^i
	\operatorname{curl}u^\perp\cdot\nabla u^i\,dx
	\right|
	\nonumber\\
	&\leq
	C\sigma^m
	\|\nabla u\|_{L^2}^3
	\|\nabla F\|_{L^6}
	+
	C\sigma^m
	\|\nabla u\|_{L^4}^2
	\|\nabla u\|_{L^2}
	\|F\|_{L^3}\notag\\[1mm]
	&\leq
	C\sigma^m
	\|\nabla u\|_{L^2}^3
	\left(
	\|\nabla\dot u\|_{L^2}
	+
	\|\nabla u\|_{L^2}^2
	\right)+C\sigma^m
	\|\nabla u\|_{L^4}^2
	\|\nabla u\|_{L^2}
	\|\sqrt\rho\dot u\|_{L^2}
	\nonumber\\[1mm]
	&\leq
C\delta\sigma^m
	\|\nabla\dot u\|_{L^2}^2
	+
	C \sigma^m\left(
	\|\nabla u\|_{L^2}^4
	+
	\|\nabla u\|_{L^2}^6
+
	\|\nabla u\|_{L^4}^4
	+
	\|\nabla u\|_{L^2}^2
	\|\sqrt\rho\dot u\|_{L^2}^2\right),
	\label{boundary2}
\end{align}
while the trace inequality, Poincar\'e's inequality for $F$, Young's inequality, and \eqref{xxxx1} yield that
\begin{align}
 \left|
	\mathcal J(t)
	\right|
 \leq
	C
	\|F\|_{H^1}
	\|u\|_{H^1}^2
 \leq
	\frac{1}{8}
	\|\sqrt\rho\dot u\|_{L^2}^2
	+
	C
	\|\nabla u\|_{L^2}^4.
	\label{boundary3}
\end{align}
So, inserting \eqref{boundary1}--\eqref{boundary3} into
\eqref{bo12}, one gets
\begin{align*}
	J_1^{1}
	 &\leq-\frac{d}{dt}
	\left(
	\sigma^m\mathcal J(t)
	\right)
	+ Cm\sigma^{m-1}\sigma^{\prime}
	\left(
	\|\sqrt\rho\dot u\|_{L^2}^2
	+
	\|\nabla u\|_{L^2}^2
	\right)\notag\\[1mm]
	&\quad+C\delta\sigma^m
	\|\nabla\dot u\|_{L^2}^2
	+
	C \sigma^m\left(
	\|\nabla u\|_{L^2}^2
	\|\sqrt\rho\dot u\|_{L^2}^2
 +
	\|\nabla u\|_{L^4}^4
	+
	\|\nabla u\|_{L^2}^2\right).
\end{align*}

Since
\begin{gather*}
	(P-\bar P)_t
	+
	u\cdot\nabla(P-\bar P)
	+
	\gamma P\divv u
	-
	(\gamma-1)\overline{P\divv u}
	=
	0,\\
	\divv\dot u
	=
	\divv u_t+\divv(u\cdot\nabla u)
=
	\divv u_t
	+
	u\cdot\nabla\divv u
	+
	\partial_i u^j\partial_j u^i,
\end{gather*}
we derive an evolution equation for $F$:
\begin{align*}
	F_t+u\cdot\nabla F
	=
	(2\mu+\lambda)\divv\dot u
	-
	(2\mu+\lambda)
	\partial_i u^j\partial_j u^i
+
	\gamma P\divv u
	-
	(\gamma-1)\overline{P\divv u},
\end{align*}
which implies that
\begin{align}\label{J22}
	J_1^{2}
	=&
	-(2\mu+\lambda)
	\int_\Omega
	\sigma^m(\divv\dot u)^2dx
	+
	(2\mu+\lambda)
	\int_\Omega
	\sigma^m
	\partial_i u^j\partial_j u^i
	\divv\dot u\,dx
	-
	\gamma
	\int_\Omega
	\sigma^m
	P\divv u\divv\dot u\,dx
	\nonumber\\
	&+
	(\gamma-1)
	\sigma^m
	\overline{P\divv u}
	\int_\Omega\divv\dot u\,dx
	+
	\int_\Omega
	\sigma^m
	u\cdot\nabla F
	\divv\dot u\,dx\notag\\
	\leq&-(2\mu+\lambda)
	\int_\Omega
	\sigma^m|\divv\dot u|^2dx+C\sigma^m\left( \delta
	\|\nabla\dot u\|_{L^2}^2
	+
	\|\nabla u\|_{L^2}^2+\|\nabla u\|_{L^4}^4\right)+
	\int_\Omega
	\sigma^m
	u\cdot\nabla F
	\divv\dot u\,dx,
\end{align}
where we have used Young's inequality and the following facts
\begin{gather*}
	\left|
	\overline{P\divv u}
	\right|
	\leq
	\frac{1}{|\Omega|}
	\|P\|_{L^2}
	\|\divv u\|_{L^2}
\leq
	C\|\nabla u\|_{L^2},
\\
	(\gamma-1)
	\sigma^m
	\left|
	\overline{P\divv u}
	\int_\Omega\divv\dot u\,dx
	\right|
	 \leq
	C\sigma^m
	\|\nabla u\|_{L^2}
	\|\divv\dot u\|_{L^2}
	 \leq
	C\delta\sigma^m
	\|\nabla\dot u\|_{L^2}^2
	+
	C \sigma^m
	\|\nabla u\|_{L^2}^2.
\end{gather*}

For the last term on the right-hand side of \eqref{J22} and $J_1^{3}$, we have
\begin{align*}
	&\left|
	\int_\Omega
	\sigma^m
	u\cdot\nabla F
	\divv\dot u\,dx
	-
	\int_\Omega
	\sigma^m
	u^i\partial_i\dot u^j
	\partial_jF\,dx
	\right|
	\nonumber\\
	& \leq
	C\sigma^m
	\|u\|_{L^6}
	\|\nabla F\|_{L^3}
	\|\nabla\dot u\|_{L^2}
	\nonumber\\
	& \leq
	\frac{1}{16}\sigma^m
	\|\nabla\dot u\|_{L^2}^2
	+
	C \sigma^m
	\|\nabla u\|_{L^2}^2
	\|\nabla F\|_{L^3}^2
	\nonumber\\
	& \leq
	\frac{1}{16}\sigma^m
	\|\nabla\dot u\|_{L^2}^2
	+
	C \sigma^m
	\|\nabla u\|_{L^2}^2
	\|\nabla F\|_{L^2}
	\|\nabla F\|_{L^6}\nonumber\\
	& \leq
	\frac{1}{16}\sigma^m
	\|\nabla\dot u\|_{L^2}^2
+C\sigma^m
	\|\nabla u\|_{L^2}^2
	\|\sqrt\rho\dot u\|_{L^2}
	\left(
	\|\nabla\dot u\|_{L^2}
	+
	\|\nabla u\|_{L^2}^2
	\right)\notag\\
	& \leq
	\frac18\sigma^m
	\|\nabla\dot u\|_{L^2}^2
	+
	C  \sigma^m
	\left( \|\nabla u\|_{L^2}^4
	\|\sqrt\rho\dot u\|_{L^2}^2
	+
	\|\nabla u\|_{L^2}^2\right).
\end{align*}
Consequently, putting the above estimates on $J_1^{i}$ $(i=1, 2, 3)$  into \eqref{J1-1}, we obtain
\begin{align}\label{J1-final}
	J_1
	\leq&
	-\frac{d}{dt}
	\left(
	\sigma^m\mathcal J(t)
	\right)
	-
	(2\mu+\lambda)
	\sigma^m
	\|\divv \dot u\|_{L^2}^2
	+
	Cm\sigma^{m-1}\sigma^{\prime}
	\left(
	\|\sqrt\rho\dot u\|_{L^2}^2
	+
	\|\nabla u\|_{L^2}^2
	\right)
	+
	C\delta\sigma^m
	\|\nabla\dot u\|_{L^2}^2
	\nonumber\\
	&+
	C \sigma^m
	\big[
	(\|\nabla u\|_{L^2}^2+\|\nabla u\|_{L^2}^4)
	\|\sqrt\rho\dot u\|_{L^2}^2
	+
	\|\nabla u\|_{L^2}^2
	+
	\|\nabla u\|_{L^4}^4
	\big].
\end{align}

Next we estimate $J_2$. Note that
\begin{align*}
\operatorname{div}\left(u(\operatorname{curl} \operatorname{curl}u)^{j}\right)=\operatorname{div} u(\operatorname{curl} \operatorname{curl}u)^{j}+u^{i} \partial_{i}(\operatorname{curl} \operatorname{curl}u)^{j},
\end{align*}
one finds
\begin{align}\label{J2s}
	J_2
	&=
	-\mu\int_\Omega
	\sigma^m
	\dot u\cdot
	\operatorname{curl}\operatorname{curl}u_t\,dx
	-
	\mu\int_\Omega
	\sigma^m
	\dot u\cdot
	\operatorname{curl}\operatorname{curl}u
	\operatorname{div}u\,dx
	 -
	\mu\int_\Omega
	\sigma^m
	u^i\dot u\cdot
	\operatorname{curl}(\partial_i\operatorname{curl}u)\,dx\notag\\
	&\triangleq J_2^{1}+J_2^{2}+J_2^{3}.
\end{align}
Thanks to
\begin{align*}
	\operatorname{curl}u_t
	=
	\operatorname{curl}u_t
=
	\operatorname{curl}\dot u
	-
	\operatorname{curl}(u\cdot\nabla u)=\curl \dot u-u^i\partial_i \operatorname{curl}u-\na u^i\times \pa_iu,
\end{align*}
one sees that
\begin{align*}
	J_2^{1}&=-\mu\int_\Omega
	\sigma^m
	\dot u\cdot
	\operatorname{curl}\operatorname{curl}u_t\,dx\notag\\
	&=-\mu\int_\Omega
	\sigma^m
	\operatorname{curl}\dot u\cdot\operatorname{curl}u_t\,dx\notag\\
	&=
	-\mu\int_\Omega
	\sigma^m
	|\operatorname{curl}\dot u|^2\,dx
	+
	\mu\int_\Omega
	\sigma^m
	\operatorname{curl}\dot u
	\cdot
	\operatorname{curl}(u\cdot\nabla u)\,dx
	\nonumber\\
	& =
	-\mu\int_\Omega
	\sigma^m
	|\operatorname{curl}\dot u|^2\,dx
	+
	\mu\int_\Omega
	\sigma^m
	u^i\partial_i\operatorname{curl}u
	\cdot
	\operatorname{curl}\dot u\,dx
	+
	\mu\int_\Omega
	\sigma^m
	\operatorname{curl}\dot u
	\cdot
	\left(
	\nabla u^i\times\partial_i u
	\right)dx.
\end{align*}
For $J_2^{2}$, integration by parts and the fact
\begin{align*}
	\dot u\cdot\operatorname{curl}\operatorname{curl}u
	=
	\operatorname{div}(\operatorname{curl}u\times\dot u)
	+
	\operatorname{curl}u\cdot\operatorname{curl}\dot u
\end{align*}
imply that
\begin{align*}
	J_2^{2}
	&=
	-\mu\int_\Omega
	\sigma^m
	\operatorname{div}u\,
	\operatorname{div}(\operatorname{curl}u\times\dot u)\,dx
	-
	\mu\int_\Omega
	\sigma^m
	\operatorname{div}u\,
	\operatorname{curl}u\cdot\operatorname{curl}\dot u\,dx\notag\\
	&=
	\mu\int_\Omega
	\sigma^m
	(\operatorname{curl}u\times\dot u)
	\cdot\nabla\operatorname{div}u\,dx
	-
	\mu\int_\Omega
	\sigma^m
	\operatorname{div}u\,
	\operatorname{curl}u\cdot\operatorname{curl}\dot u\,dx,
\end{align*}
 where the boundary terms vanish due to
$
(\operatorname{curl}u\times\dot u)\cdot n
=
(n\times\operatorname{curl}u)\cdot\dot u
=
0$
 on $\partial\Omega$.
For $J_2^{3}$, since
\begin{gather*}
	\dot u\cdot
	\operatorname{curl}(\partial_i\operatorname{curl}u)
	=
	\operatorname{div}
	\left(
	\partial_i\operatorname{curl}u\times\dot u
	\right)
	+
	\partial_i\operatorname{curl}u\cdot
	\operatorname{curl}\dot u,\\
	\partial_i\operatorname{curl}u\times\dot u
	=
	\partial_i(\operatorname{curl}u\times\dot u)
	-
	\operatorname{curl}u\times\partial_i\dot u,
\end{gather*}
one deduces that
\begin{align*}
J_2^{3}
	&=
	-\mu\int_\Omega
	\sigma^m
	u^i
	\operatorname{div}
	\left(
	\partial_i\operatorname{curl}u\times\dot u
	\right)dx
	-
	\mu\int_\Omega
	\sigma^m
	u^i\partial_i\operatorname{curl}u
	\cdot
	\operatorname{curl}\dot u\,dx\notag\\
	&=	-\mu\int_\Omega
	\sigma^m
	u\cdot\nabla
	\operatorname{div}
	(\operatorname{curl}u\times\dot u)\,dx
	+
	\mu\int_\Omega
	\sigma^m
	u^i
	\operatorname{div}
	\left(
	\operatorname{curl}u\times\partial_i\dot u
	\right)dx-
	\mu\int_\Omega
	\sigma^m
	u^i\partial_i\operatorname{curl}u
	\cdot
	\operatorname{curl}\dot u\,dx\notag\\
	&=-\mu\int_\Omega
	\sigma^m
	u\cdot\nabla
	\operatorname{div}
	(\operatorname{curl}u\times\dot u)\,dx+\mu\int_\Omega
	\sigma^m
	\left(
	\operatorname{curl}u\times\nabla u^i
	\right)
	\cdot\partial_i\dot u\,dx-
	\mu\int_\Omega
	\sigma^m
	u^i\partial_i\operatorname{curl}u
	\cdot
	\operatorname{curl}\dot u\,dx.
\end{align*}
Notice that $(\operatorname{curl}u\times\dot u)\cdot n=0$ on $\partial\Omega$ and
\begin{align*}
	\int_\Omega
	\sigma^m
	(\operatorname{curl}u\times\dot u)
	\cdot\nabla\operatorname{div}u\,dx
	-
	\int_\Omega
	\sigma^m
	u\cdot\nabla
	\operatorname{div}(\operatorname{curl}u\times\dot u)\,dx=
	\int_\Omega
	\sigma^m
	\operatorname{div}
	\left[
	(\operatorname{div}u)
	(\operatorname{curl}u\times\dot u)
	\right]dx=0.
\end{align*}
Substituting the above estimates on $J_2^{i}$ $(i=1, 2, 3)$ into \eqref{J2s}, we derive
\begin{align*}
	J_2
	=&
	-\mu\int_\Omega
	\sigma^m
	|\operatorname{curl}\dot u|^2dx
+
	\mu\int_\Omega
	\sigma^m
	\left(
	\operatorname{curl}u\times\nabla u^i
	\right)
	\cdot\partial_i\dot u\,dx
+
	\mu\int_\Omega
	\sigma^m
	\operatorname{curl}\dot u
	\cdot
	\left(
	\nabla u^i\times\partial_i u
	\right)dx
	\nonumber\\
	&-
	\mu\int_\Omega
	\sigma^m
	\operatorname{div}u\,
	\operatorname{curl}u
	\cdot
	\operatorname{curl}\dot u\,dx\notag\\
 \leq&
	-\mu\int_\Omega
	\sigma^m
	|\operatorname{curl}\dot u|^2dx
	+
	C\sigma^m
	\int_\Omega
	|\nabla u|^2
	|\nabla\dot u|dx
	\nonumber\\
	\leq&
	-\mu\int_\Omega
	\sigma^m
	|\operatorname{curl}\dot u|^2dx
	+
	C\sigma^m
	\|\nabla u\|_{L^4}^2
	\|\nabla\dot u\|_{L^2} \notag\\
	\leq&
	-\mu\int_\Omega
	\sigma^m
	|\operatorname{curl}\dot u|^2dx
	+
	\delta
	\sigma^m
	\|\nabla\dot u\|_{L^2}^2
	+
	C
	\sigma^m
	\|\nabla u\|_{L^4}^4,
\end{align*}
which combined with \eqref{d12},
\eqref{J0}, and \eqref{J1-final} yields that after
choosing $\delta$ small enough
\begin{align}\label{bq}
	&\frac{d}{dt}
	\left[
	\sigma^m
	\|\sqrt\rho\dot u\|_{L^2}^2
	+
	2\sigma^m\mathcal J(t)
	\right]
	+
	C\sigma^m
	\|\nabla\dot u\|_{L^2}^2
	\nonumber\\
	& \leq
	Cm\sigma^{m-1}\sigma^{\prime}
	\left(
	\|\sqrt\rho\dot u\|_{L^2}^2
	+
	\|\nabla u\|_{L^2}^2
	\right)
+
	C \sigma^m
	\big[
	(\|\nabla u\|_{L^2}^2+\|\nabla u\|_{L^2}^4)
	\|\sqrt\rho\dot u\|_{L^2}^2
	+
	\|\nabla u\|_{L^2}^2
	+
	\|\nabla u\|_{L^4}^4
	\big].
\end{align}

It is a position to prove the time-weighted estimate \eqref{mB1}. Take $m=1$ in \eqref{bq}, it follows from \eqref{f12} and \eqref{xxxx1} that
\begin{align}\label{m2}
	&\frac{d}{dt}
	\left[
	\sigma
	\|\sqrt\rho\dot u\|_{L^2}^2
	+
	2\sigma\mathcal J(t)
	\right]
	+
	C\sigma
	\|\nabla\dot u\|_{L^2}^2
	\nonumber\\
	&\leq
	C\sigma'
	\left(
	\|\sqrt\rho\dot u\|_{L^2}^2
	+
	\|\nabla u\|_{L^2}^2
	\right)
	+
	C\sigma
	\left[
	\left(
	\|\nabla u\|_{L^2}^2
	+
	\|\nabla u\|_{L^2}^4
	\right)
	\|\sqrt\rho\dot u\|_{L^2}^2
	+
	\|\nabla u\|_{L^2}^2
	+
	\|\nabla u\|_{L^4}^4
	\right]\nonumber\\
	&\leq
	C\sigma'
	\left(
	\|\sqrt\rho\dot u\|_{L^2}^2
	+
	\|\nabla u\|_{L^2}^2
	\right)
	+
	C\sigma
	\|\nabla u\|_{L^2}^2
	\|\sqrt\rho\dot u\|_{L^2}^2
	+
	C\sigma
	\|\sqrt\rho\dot u\|_{L^2}^4
	+
	C\sigma \left(\|\nabla u\|_{L^2}^2
	+
	\|P-\bar P\|_{L^2}^2 \right),
\end{align}
due to
\begin{align*}
	\|\nabla u\|_{L^4}^4
	&\leq
	C
	\|\sqrt\rho\dot u\|_{L^2}^3
	\left(
	\|\nabla u\|_{L^2}
	+
	\|P-\bar P\|_{L^2}
	\right)
	+
	C\|\nabla u\|_{L^2}^4
	+
	C\|P-\bar P\|_{L^4}^4\notag\\[2mm]
	&\leq
	C
	\|\sqrt\rho\dot u\|_{L^2}^4
	+
	C\left(\|\nabla u\|_{L^2}^2
	+
	\|P-\bar P\|_{L^2}^2 \right).
\end{align*}
Integrating \eqref{m2} over $[0,T]$, we find
\begin{align}
	&\sup_{0\leq t\leq T}\left( \sigma
	\|\sqrt\rho\dot u(t)\|_{L^2}^2
	+
	2\sigma  \mathcal J(t)\right)
	+
	C\int_0^T
	\sigma
	\|\nabla\dot u \|_{L^2}^2\,dt
	\nonumber\\
	&\leq
	C
	\int_0^{\sigma(T)}
	\left(
	\|\sqrt\rho\dot u \|_{L^2}^2
	+
	\|\nabla u \|_{L^2}^2
	\right)dt
	+
	C
	\int_0^T
	\sigma
	\|\nabla u \|_{L^2}^2
	\|\sqrt\rho\dot u \|_{L^2}^2\,dt
	\nonumber\\
	&\quad+
	C
	\int_0^T
	\sigma
	\|\sqrt\rho\dot u \|_{L^2}^4\,dt
	+
	C
	\int_0^T
	\sigma  \left(\|\nabla u \|_{L^2}^2
	+
	\| P-\bar P \|_{L^2}^2 \right)dt.
	\label{B1-pre}
\end{align}
Using Lemma \ref{Aclosed}, \eqref{basic1}, and
\eqref{xxxx1}, we have
\begin{align*}
	\sup_{0\leq t\leq T}
	\left( \sigma  \|\nabla u(t)\|_{L^2}^2\right)
	+
	\int_0^T
	\|\sqrt\rho\dot u \|_{L^2}^2\,dt
	+
	\int_0^T
	D(t)\,dt
	\leq
	CD(0).
\end{align*}
Therefore,
\begin{gather}\label{previous-estimates-B1}
	\int_0^T
	\sigma
	\|\nabla u\|_{L^2}^2
	\|\sqrt\rho\dot u \|_{L^2}^2\,dt
	 \leq
	\sup_{0\leq t\leq T}
	\big(
	\sigma \|\nabla u(t)\|_{L^2}^2
	\big)
	\int_0^T
	\|\sqrt\rho\dot u \|_{L^2}^2\,dt
 \leq
	CD(0)^2,\\ 	\label{U4-weight}
	\int_0^T
	\sigma
	\|\sqrt\rho\dot u\|_{L^2}^4\,dt
	\leq
	\sup_{0\leq t\leq T}
	\big(
	\sigma
	\|\sqrt\rho\dot u(t)\|_{L^2}^2
	\big)
	\int_0^T
	\|\sqrt\rho\dot u \|_{L^2}^2\,dt.
\end{gather}
It follows from \eqref{B1-pre}--\eqref{U4-weight} that
\begin{align}
	\sup_{0\leq t\leq T}
	\left( \sigma
	\|\sqrt\rho\dot u(t)\|_{L^2}^2\right)
	+
	C\int_0^T
	\sigma
	\|\nabla\dot u \|_{L^2}^2\,dt
	\leq
	CD(0)
	+
	CD(0)\sup_{0\leq t\leq T}
	\left( \sigma
	\|\sqrt\rho\dot u(t)\|_{L^2}^2\right).
	\label{B1-close}
\end{align}
Since $D(0)\leq2\varepsilon$, we may choose $\varepsilon>0$
sufficiently small such that
\begin{align}\label{xiao5}
CD(0)
\leq
2C\varepsilon
\leq
\frac12.
\end{align}
Thus the last term on the right-hand side of
\eqref{B1-close} can be absorbed, and we obtain
 \eqref{mB1}.

Finally, we show the exponential decay \eqref{w41}. Assume $T\geq1$. Since
$\sigma(t)=1$ for $t\geq1$, we take $m=0$ in \eqref{bq} and obtain
\begin{align}\label{B1-unweighted-long}
	\frac{d}{dt}
	\left[
	\|\sqrt\rho\dot u(t)\|_{L^2}^2
	+
	2\mathcal J(t)
	\right]
	+
	 \|\nabla\dot u\|_{L^2}^2
	\leq
	C
	\left[
	\left(
	\|\nabla u\|_{L^2}^2
	+
	\|\nabla u\|_{L^2}^4
	\right)
	\|\sqrt\rho\dot u\|_{L^2}^2
	+
	\|\nabla u\|_{L^2}^2
	+
	\|\nabla u\|_{L^4}^4
	\right].
\end{align}
By \eqref{mB1},
$
\|\sqrt\rho\dot u(t)\|_{L^2}^2
\leq
CD(0)$ for $t\geq1$, then one has
\begin{align*}
	\|\nabla u\|_{L^4}^4
	\leq
	C
	\|\sqrt\rho\dot u\|_{L^2}^2
	+
	CD(t),
	\qquad
	1\leq t\leq T.
\end{align*}
Hence,
\begin{align}\label{high-order-long}
	\frac{d}{dt}
\left(
	\|\sqrt\rho\dot u\|_{L^2}^2
	+
	2\mathcal J(t)
	\right)
	+
	 C\|\nabla\dot u\|_{L^2}^2
	\leq
	C
	\|\sqrt\rho\dot u\|_{L^2}^2
	+
	CD(t).
\end{align}
Define
\begin{align*}
	\mathcal C_3(t)
	\triangleq
	\|\sqrt\rho\dot u(t)\|_{L^2}^2
	+
	2\mathcal J(t)
	+
	C_3\mathcal C_2(t),
\end{align*}
where $C_3$ is a positive constant. By \eqref{Phi-equivalence} and \eqref{boundary3},  $C_3$ can be chosen sufficiently large so that
\begin{align}\label{C3-equivalence}
	\mathcal C_3(t)
	\sim
	\|\sqrt\rho\dot u(t)\|_{L^2}^2
	+
	D(t),
	\quad
	1\leq t\leq T.
\end{align}
Multiplying \eqref{cm2-decay-local} by $C_3$ and adding the
result to \eqref{high-order-long}, we obtain
\begin{align*}
	\frac{d}{dt}\mathcal C_3(t)
	+
	C
	\|\nabla\dot u\|_{L^2}^2
	+
	C
	\|\sqrt\rho\dot u\|_{L^2}^2
	+
	CD(t)
	\leq
	0,
\end{align*}
which together with \eqref{C3-equivalence} implies
\begin{align}\label{C3-decay}
	\frac{d}{dt}\mathcal C_3(t)
	+
	\eta_3\mathcal C_3(t)
	+
	C\|\nabla\dot u\|_{L^2}^2
	\leq
	0
\end{align}
for some sufficiently small $\eta_3>0$. Multiplying \eqref{C3-decay} by $e^{\eta_3t}$ and integrating the resultant over $[1,T]$ yields
\begin{align*}
	\sup_{1\leq t\leq T}\left( e^{\eta_3t}\mathcal C_3(t)\right)
	+
	 \int_1^T
	e^{\eta_3t}
	\|\nabla\dot u(t)\|_{L^2}^2\,dt
	\leq
	e^{\eta_3}\mathcal C_3(1)
\leq
CD(0),
\end{align*}
proving \eqref{w41}.
 \end{proof}

\subsection{Proof of Proposition \ref{pro}}

We are now in a position to close the bootstrap assumptions
\eqref{cp1}$_{2}$.
\begin{lemma}\label{sca}
Assume that \eqref{cp1} holds. Then
	\begin{align}\label{xue1}
		&\sup_{0\leq t\leq T}D(t)
		+\sup_{0\leq t\leq T}
		\|\nabla u(t)\|_{L^2}^2
		\int_0^T
		\|\nabla u(t)\|_{L^2}^2\,d t
		\leq
		\frac74\varepsilon.
	\end{align}
\end{lemma}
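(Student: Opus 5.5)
The plan is to combine the uniform estimates of Lemmas \ref{lem-basic} and \ref{lem2}, which bound everything by $D(0)$, with the smallness hypothesis \eqref{sca1}. That hypothesis says exactly $D(0)\le\varepsilon_0$. Note first that $D(0)=\|\nabla u_0\|_{L^2}^2+\|P(\rho_0)-\bar P_0\|_{L^2}^2$, since $\bar P(0)=\bar P_0$.

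\textbf{Step 1.} By \eqref{xxxx1} there is a constant $\widetilde C$, depending only on $\mu,\lambda,\gamma,\Omega,\bar\rho,\rho_*$, with
\begin{align*}
\sup_{0\le t\le T}D(t)\le \widetilde C D(0).
\end{align*}

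\textbf{Step 2.} By \eqref{basic1} (equivalently \eqref{w1}) we have $\int_0^T\|\nabla u\|_{L^2}^2\,dt\le \widetilde C D(0)$, enlarging $\widetilde C$ if necessary. Combining this with Step 1 gives
\begin{align*}
\sup_{0\le t\le T}\|\nabla u(t)\|_{L^2}^2\int_0^T\|\nabla u\|_{L^2}^2\,dt\le \widetilde C^2D(0)^2 .
\end{align*}
Hence the left-hand side of \eqref{xue1} is bounded by
\begin{align*}
\widetilde C D(0)+\widetilde C^2 D(0)^2\le \widetilde C\varepsilon_0+\widetilde C^2\varepsilon_0^2 .
\end{align*}

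\textbf{Step 3.} We now use $\varepsilon\ge c_0\varepsilon_0$, i.e. $\varepsilon_0\le \varepsilon/c_0$. Since also $\varepsilon\le\varepsilon_1<1/2$, we get
\begin{align*}
\widetilde C\varepsilon_0+\widetilde C^2\varepsilon_0^2\le \frac{\widetilde C}{c_0}\varepsilon+\frac{\widetilde C^2}{c_0^2}\varepsilon^2\le\Big(\frac{\widetilde C}{c_0}+\frac{\widetilde C^2}{c_0^2}\Big)\varepsilon .
\end{align*}
Choosing $c_0$ large, for instance $c_0\ge 4\widetilde C$ so that the bracket is at most $1/4+1/16<7/4$, yields \eqref{xue1}.

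The only genuine point to check is the consistency of constants. The constants in \eqref{basic1} and \eqref{xxxx1} may depend on the choice of $\varepsilon_1$, because those lemmas required $\varepsilon$ small, for example $2C\varepsilon\le 1/2$ in \eqref{lwz5}. Those smallness conditions fix $\varepsilon_1$ first, and they depend only on the parameters. The resulting $\widetilde C$ is then independent of $c_0$, so $c_0$ can be chosen afterwards, and $\varepsilon_0$ is then taken with $c_0\varepsilon_0\le\varepsilon_1$ so that the admissible range for $\varepsilon$ is nonempty.
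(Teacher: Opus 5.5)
Your proposal is correct and follows the paper's argument: you combine \eqref{basic1} and \eqref{xxxx1} to bound the left-hand side of \eqref{xue1} by a constant, independent of $c_0$, times $D(0)\le\varepsilon_0$, then take $c_0$ large relative to that constant and use $c_0\varepsilon_0\le\varepsilon$. The only cosmetic difference is that the paper absorbs the quadratic term via $D(0)^2\le D(0)$ (using $\varepsilon_0<1$), whereas you absorb it via $\varepsilon^2\le\varepsilon$ after substituting $\varepsilon_0\le\varepsilon/c_0$.
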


\begin{proof}
	Combining \eqref{basic1} with \eqref{xxxx1}, and using the smallness
	assumption $D(0)\leq\varepsilon_0<1$, we obtain
	\begin{align*}
		\sup_{0\leq t\leq T}D(t)
		+
		\sup_{0\leq t\leq T}\|\nabla u(t)\|_{L^2}^2
		\int_0^T\|\nabla u(t)\|_{L^2}^2\,dt
		\leq C\varepsilon_0 .
	\end{align*}
	Choosing
	\begin{align}\label{co}
		c_0
		\triangleq
		\max\left\{
		\frac12,\frac{4C}{7}
		\right\}.
	\end{align}
	and using $c_0\varepsilon_0\leq\varepsilon$, we have \eqref{xue1}.
\end{proof}

We now close the density part of the bootstrap assumption \eqref{cp1}. The argument is
based on the Zlotnik inequality. Since the compatibility condition is not assumed on the initial data, the higher-order estimates near the initial time require the
time-weighted  $\sigma(t)$. In contrast, the large-time estimates are
obtained from the previously established exponential decay. Therefore, we
treat these two time regimes separately.
\begin{lemma}\label{lemupper}
Assume that \eqref{cp1} holds. Then
	\begin{align}\label{zui}
		\sup_{0\leq t\leq T}
		\|\rho(t)\|_{L^\infty}
		\leq
		\frac74\rho_*.
	\end{align}
\end{lemma}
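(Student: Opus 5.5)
We use the Zlotnik inequality along particle trajectories. Let $X(t;x)$ be the flow of $u$, $X_t=u(X,t)$, $X(0)=x$. By the definition of $F$, the continuity equation gives
\begin{align*}
\frac{d}{dt}\rho(X,t)=-\rho\divv u=-\frac{\rho}{2\mu+\lambda}\bigl(P(\rho)-\bar P\bigr)-\frac{\rho F}{2\mu+\lambda}.
\end{align*}
Setting $y=\rho(X,t)$, this is the equation of Lemma \ref{zo} with
\begin{align*}
g(y)=-\frac{y(y^\gamma-\bar P)}{2\mu+\lambda},\qquad b(t)=-\frac{1}{2\mu+\lambda}\int_0^t\rho F(X,s)\,ds.
\end{align*}
Since $\bar P\le (2\rho_*)^\gamma$ is bounded, we have $g(\infty)=-\infty$. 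It therefore suffices to bound $|b(t_2)-b(t_1)|\le 2\rho_*(2\mu+\lambda)^{-1}\int_{t_1}^{t_2}\|F\|_{L^\infty}dt$. For this we use the Gagliardo--Nirenberg inequality with $\bar F=0$, together with \eqref{ff8} and Lemma \ref{xu}:
\begin{align*}
\|F\|_{L^\infty}\le C\|F\|_{L^2}^{1/4}\|\nabla F\|_{L^6}^{3/4}\le C\bigl(\|\sqrt\rho\dot u\|_{L^2}+\|\nabla u\|_{L^2}+\|P-\bar P\|_{L^2}\bigr)^{1/4}\bigl(\|\nabla\dot u\|_{L^2}+\|\nabla u\|_{L^2}^2\bigr)^{3/4}.
\end{align*}

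\textbf{Short time} $t\in[0,\sigma(T)]$. We take $N_1=0$. The key point is to show $\int_0^{\sigma(T)}\|F\|_{L^\infty}dt\le CD(0)^{\theta}$ for some $\theta>0$. Writing $\|\nabla\dot u\|_{L^2}^{3/4}=(t\|\nabla\dot u\|^2_{L^2})^{3/8}t^{-3/8}$ and applying H\"older in time with the weight, we use:
\begin{itemize}
\item $B_1(T)\le CD(0)$ from \eqref{mB1};
\item $\|\sqrt\rho\dot u\|_{L^2}\le (CD(0)/t)^{1/2}$, also from $B_1$;
\item $\int_0^T\|\sqrt\rho\dot u\|^2_{L^2}\,dt\le CD(0)$ from \eqref{xxxx1};
\item $\sup D\le CD(0)$.
\end{itemize}
For example,
\begin{align*}
\int_0^1\|\sqrt\rho\dot u\|_{L^2}^{1/4}\|\nabla\dot u\|_{L^2}^{3/4}dt\le \Big(\int_0^1 t\|\nabla\dot u\|_{L^2}^2dt\Big)^{3/8}\Big(\int_0^1 t^{-3/5}\|\sqrt\rho\dot u\|_{L^2}^{2/5}dt\Big)^{5/8}.
\end{align*}
The last factor is finite: by H\"older with exponents $5$ and $5/4$ it is bounded by $(\int\|\sqrt\rho\dot u\|^2)^{1/5}(\int t^{-3/4})^{4/5}\le CD(0)^{1/5}$. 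The terms with $\|\nabla u\|_{L^2}+\|P-\bar P\|_{L^2}$ are handled the same way, using $D\le CD(0)$. The $\|\nabla u\|^{3/2}_{L^2}$ part is trivially $CD(0)^{\cdots}$. Lemma \ref{zo} then gives
\begin{align*}
\rho\le\max\{\rho_*,\bar\zeta\}+CD(0)^\theta,
\end{align*}
where $\bar\zeta$ is chosen so that $g\le0$ for $\zeta\ge\bar\zeta$, i.e.\ $\bar\zeta=\bar P^{1/\gamma}$. By Jensen's inequality, $\bar P^{1/\gamma}$ lies close to $\bar\rho\le\rho_*$ when $\|P-\bar P\|_{L^2}$ is small; in fact $\bar\zeta\le (\overline{P})^{1/\gamma}\le\|\rho\|_{L^\infty}^{\,\cdot}$, and one can simply use $\bar P\le \rho_*^{\gamma}+C\|P-P(\rho_*)\|$. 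More simply, $\bar P^{1/\gamma}\le\rho_*+C\|P-\bar P\|_{L^2}^{c}$ is not generally true, so instead we take $\bar\zeta=\rho_*$ after checking $\bar P\le\rho_*^\gamma$. This holds because $\bar P(t)$ is close to $P(\bar\rho)\le\rho_*^\gamma$ up to $CD(0)^{1/2}$, which is the point requiring care; if needed, we take $\bar\zeta=\rho_*(1+CD(0)^{1/2})$. Choosing $\varepsilon_0$ small then gives $\rho\le\frac32\rho_*$ on $[0,1]$.

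\textbf{Large time} $t\in[1,T]$. By \eqref{fir} and \eqref{w41}, $\|\sqrt\rho\dot u\|_{L^2}$, $D$ and $\int_t^{t+1}\|\nabla\dot u\|^2_{L^2}$ all decay like $D(0)e^{-\eta t}$. The bound above then gives $\int_1^T\|F\|_{L^\infty}dt\le C\int_1^\infty e^{-\eta t/2}(\cdots)\le CD(0)^{1/2}$. Applying Zlotnik on $[1,T]$ with $y^0\le\frac32\rho_*$ and $N_1=0$ gives $\rho\le\max\{\frac32\rho_*,\bar\zeta\}+CD(0)^{1/2}\le\frac74\rho_*$ for $\varepsilon_0$ small.

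The main obstacle is the short-time integrability of $\|F\|_{L^\infty}$ near $t=0$. There only time-weighted bounds are available, and the exponents must be arranged so that the singular weight $t^{-3/8}$ stays integrable. A secondary delicate point is controlling $\bar\zeta$, i.e.\ keeping $\bar P^{1/\gamma}$ below about $\frac32\rho_*$, uniformly.
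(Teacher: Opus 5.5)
Your estimates for $\int\|F\|_{L^\infty}\,dt$ on $[0,\sigma(T)]$ and on $[1,T]$ are sound and follow the paper's scheme. The only difference is the interpolation: you use $\|F\|_{L^\infty}\le C\|F\|_{L^2}^{1/4}\|\nabla F\|_{L^6}^{3/4}$, giving the weight $t^{-3/8}$, where the paper uses $C\|\nabla F\|_{L^2}^{1/2}\|\nabla F\|_{L^6}^{1/2}$, giving $t^{-1/4}$. Both weights are integrable near $t=0$.

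The gap is in how you treat the pressure average. Lemma \ref{zo} requires an autonomous $g\in C(\mathbb R)$. Your $g(y)=-y(y^\gamma-\bar P(t))/(2\mu+\lambda)$ depends on $t$, so the lemma does not apply as written, and $\bar\zeta$ is never actually fixed. Your discussion of $\bar\zeta$ also contains errors:
\begin{itemize}
\item The claim $\bar P(t)\le\rho_*^\gamma$ is false in general. If $\bar\rho=\rho_*$, then $\rho_0\equiv\rho_*$, and Jensen gives $\bar P(t)>\rho_*^\gamma$ as soon as $\rho(t)$ is nonconstant.
\item Jensen only gives the lower bound $\bar P^{1/\gamma}\ge\bar\rho$, not closeness from above.
\item The bound you dismiss as ``not generally true'' actually holds with exponent $2$. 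Since $\int_\Omega(\rho-\bar\rho)\,dx=0$, Lemma \ref{Gg} and \eqref{basic1} give
\begin{align*}
0\le\bar P(t)-P(\bar\rho)=\frac{\gamma-1}{|\Omega|}\int_\Omega G(\rho)\,dx\le C\|(P-\bar P)(t)\|_{L^2}^2\le CD(0).
\end{align*}
\end{itemize}

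The paper avoids this by writing $P-\bar P=(P-P(\bar\rho))-(\bar P-P(\bar\rho))$.
\begin{itemize}
\item Then $g(\xi)=-\xi(P(\xi)-P(\bar\rho))/(2\mu+\lambda)$ is autonomous, and $g\le0$ for $\xi\ge\rho_*\ge\bar\rho$, so $\bar\zeta=\rho_*$ exactly.
\item The nonnegative term $y(\bar P-P(\bar\rho))/(2\mu+\lambda)$ is moved into $b$.
\item By the display above and $\int_0^T\|P-\bar P\|_{L^2}^2\,dt\le CD(0)$, that term contributes at most $C\rho_*D(0)$ to $b(t_2)-b(t_1)$ on any interval.
\end{itemize}

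If you prefer to keep your formulation, use a comparison instead:
\begin{itemize}
\item Set $M\triangleq\rho_*^\gamma+CD(0)\ge\sup_t\bar P(t)$.
\item Write $y'=\tilde g(y)+\tilde b'$ with $\tilde g(\xi)=-\xi(\xi^\gamma-M)/(2\mu+\lambda)$ and $\tilde b'=b'-y(M-\bar P(t))/(2\mu+\lambda)$.
\item Since $\tilde b'\le b'$, you get $\tilde b(t_2)-\tilde b(t_1)\le b(t_2)-b(t_1)$.
\item Lemma \ref{zo} then applies to the autonomous $\tilde g$ with $\bar\zeta=M^{1/\gamma}\le\rho_*(1+CD(0))$.
\end{itemize}
With either repair, the rest of your argument goes through, giving $\frac32\rho_*$ on $[0,\sigma(T)]$ and then $\frac74\rho_*$ on $[1,T]$ from $y(1)\le\frac32\rho_*$.
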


\begin{proof}
	Fix $(x,t)\in\Omega\times[0,T]$, and let $X(s;t,x)$ be the particle
	trajectory defined by
	\begin{align*}
	\frac{d}{ds}X(s;t,x)
	=
	u(X(s;t,x),s),
	\quad
	X(t;t,x)=x,
	\quad
	0\leq s\leq t.
\end{align*}
	Set
	\begin{align*}
	y(s)
	\triangleq
	\rho(X(s;t,x),s).
\end{align*}
	Then the continuity equation \eqref{a1}$_1$ gives
	\begin{align}\label{density-characteristic}
		y'(s)
		=
		-y(s)\operatorname{div}u(X(s;t,x),s).
	\end{align}
		Recall that $
	(2\mu+\lambda)\operatorname{div}u
	=F+P-\bar P$, we obtain
	\begin{align}\label{zlo}
	y'(s)
	=
	g(y(s))+b'(s),
\end{align}
	where
	\begin{align}\label{bbbb}
	g(\xi)
		\triangleq
		-\frac{\xi\bigl(P(\xi)-P(\bar\rho)\bigr)}
		{2\mu+\lambda},\ \
b(s)
		\triangleq
		-\frac{1}{2\mu+\lambda}
		\int_0^s
		y(\tau)
		\left[ F(X(\tau;t,x),\tau)
     -\bigl(\bar P(\tau)-P(\bar\rho)\bigr)\right] \,d\tau.
	\end{align}
	Since $P(\xi)=\xi^\gamma$ with $\gamma>1$,
	$
	\lim_{\xi\to\infty}g(\xi)=-\infty$.
	Moreover,
	$
	\bar\rho\leq\|\rho_0\|_{L^\infty}=\rho_*$,	and hence
	$
	g(\xi)\leq0$
	for
	$\xi\geq\rho_*$.
	Thus, applying Lemma \ref{zo} with $N_1=0$, we may take
	$\bar\zeta=\rho_*$.

	{\bf Step 1. Short-time estimate.}
	Let
$0\leq t_1\leq t_2\leq\sigma(T)\leq1$,
it follows from \eqref{bbbb} and
	$\rho\leq2\rho_*$ that
	\begin{align}\label{b-short-split}
		|b(t_2)-b(t_1)|
		\leq
		C\rho_*
		\int_{t_1}^{t_2}
		\|F(s)\|_{L^\infty}\,ds
		 +
		C\rho_*
		\int_{t_1}^{t_2}
		|\bar P(s)-P(\bar\rho)|\,ds.
	\end{align}
	We first estimate the pressure term. Since
	$
	\int_\Omega(\rho-\bar\rho)\,dx=0$,	we can write
	\begin{align*}
		\bar P -P(\bar\rho)
		=
		\frac1{|\Omega|}
		\int_\Omega
		\Big[
		P -P(\bar\rho)
		-
		P'(\bar\rho)(\rho-\bar\rho)
		\Big]\,dx\leq C \|G(\rho)\|_{L^{1}},
	\end{align*}
which combined with	\eqref{basic1} and $t_2-t_1\leq1$ leads to
	\begin{align}\label{barP-short-integral}
		\int_{t_1}^{t_2}
		|\bar P -P(\bar\rho)|\,dt
		\leq
		CD(0).
	\end{align}	
	We next estimate the effective viscous flux. Thanks to $\bar F=0$, the
	Gagliardo--Nirenberg inequality and Lemmas \ref{l24} and \ref{xu} give
	\begin{align}\label{F-infinity}
	\|F\|_{L^\infty}
	&\leq
	C\|F\|_{L^6}^{1/2}
	\|\nabla F\|_{L^6}^{1/2}\notag\\
	&\leq
	C\|\nabla F\|_{L^2}^{1/2}
	\|\nabla F\|_{L^6}^{1/2}\notag\\
	&\leq
		C\rho_*^{3/4}
		\left[
		\|\sqrt\rho \dot u\|_{L^2}^{1/2}
		\|\nabla\dot u\|_{L^2}^{1/2}
		+
		\|\sqrt\rho \dot u\|_{L^2}^{1/2}
		\|\nabla u\|_{L^2}
		\right].
	\end{align}	
	On $[0,\sigma(T)]$, one has $\sigma(t)=t$. Hence, by H\"older's
	inequality,
	\begin{align}
	 \int_0^{\sigma(T)}
		\|\sqrt\rho \dot u \|_{L^2}^{1/2}
		\|\nabla\dot u \|_{L^2}^{1/2}\,dt
		&=
		\int_0^{\sigma(T)}
		\bigl(
		\|\sqrt\rho \dot u \|_{L^2}^2
		\bigr)^{1/4}
		\bigl(
		t\|\nabla\dot u \|_{L^2}^2
		\bigr)^{1/4}
		t^{-1/4}\,dt
		\nonumber\\
		&\leq
		C
		\left(
		\int_0^T
		\|\sqrt\rho \dot u \|_{L^2}^2\,dt
		\right)^{1/4}
		\left(
		\int_0^T
		\sigma
		\|\nabla\dot u \|_{L^2}^2\,dt
		\right)^{1/4}
		\nonumber\\[2mm]
		&\leq
		CD(0)^{1/2},
		\label{density-short-F1}
	\end{align}
	where we have used \eqref{xxxx1} and Lemma \ref{lemb}.
	Similarly,
	\begin{align}
	 \int_0^{\sigma(T)}
		\|\sqrt\rho \dot u \|_{L^2}^{1/2}
		\|\nabla u \|_{L^2}\,dt\leq
		C
		\left(
		\int_0^T
		\|\sqrt\rho \dot u \|_{L^2}^2\,dt
		\right)^{1/4}
		\left(
		\int_0^T
		\|\nabla u \|_{L^2}^2\,dt
		\right)^{1/2}
		 \leq
		CD(0)^{3/4}.
		\label{density-short-F2}
	\end{align}
	It follows from \eqref{F-infinity}--\eqref{density-short-F2} that
	\begin{align*}
		\int_0^{\sigma(T)}
		\|F \|_{L^\infty}\,dt
		\leq
		C\Big(
		D(0)^{1/2}
		+
		D(0)^{3/4}
		\Big),
	\end{align*}
which combined with \eqref{b-short-split} and \eqref{barP-short-integral} yields
	\begin{align}\label{xaio1}
 b(t_2)-b(t_1)
	\leq
	C\rho_*
	\left(
	D(0)^{1/2}
	+
	D(0)^{3/4}
	+
	D(0)
	\right)\leq C\rho_*D(0)^{1/2}\leq \frac12\rho_*
	\end{align}
provided $\varepsilon$ in \eqref{cp1} is
chosen sufficiently small.
	
	Applying Lemma \ref{zo} with
$N_0=\frac12\rho_*$, $N_1=0$, and $\bar\zeta=\rho_*$.
	Since
	$
	y(0)
	=
	\rho_0(X(0;t,x))
	\leq\rho_*$, Zlotnik's inequality implies
	\begin{align*}
	y(t)
	\leq
	\max\{y(0),\rho_*\}
	+
	\frac12\rho_*
	\leq
	\frac32\rho_*.
\end{align*}
	Consequently,
	\begin{align}\label{dresult}
		\sup_{0\leq t\leq\sigma(T)}
		\|\rho(t)\|_{L^\infty}
		\leq
		\frac32\rho_*.
	\end{align}
	
	{\bf Step 2. Large-time estimate.}
	Let
$1\leq t_1\leq t_2\leq T$ and choose $0<\eta_4<\min\{\eta_2,\eta_3\}$.
	Then
	\begin{align}\label{d1}
	 \sup_{1\leq t\leq T}
		\big[e^{\eta_4t}
		\left(
		D(t)
		+
		\|\sqrt\rho \dot u(t)\|_{L^2}^2
		\right)\big]
	    +
		\int_1^T
		e^{\eta_4t}
		\left(
		\|\sqrt\rho \dot u \|_{L^2}^2
		+
		\|\nabla\dot u \|_{L^2}^2
		\right)\,dt
		\leq
		CD(0).
	\end{align}
	In particular,
	\begin{align}\label{d2}
		\|\nabla u(t)\|_{L^2}
	    \leq
		CD(0)^{1/2}e^{-\eta_4t/2},\quad
		\|\sqrt\rho \dot u(t)\|_{L^2}^{1/2}
		 \leq
		CD(0)^{1/4}e^{-\eta_4t/4}.
	\end{align}
	Using \eqref{d1} and \eqref{d2}, we have
	\begin{align*}
	&\int_{t_1}^{t_2}
		\|\sqrt\rho \dot u \|_{L^2}^{1/2}
		\|\nabla\dot u \|_{L^2}^{1/2}\,dt\notag\\
		&=
		\int_{t_1}^{t_2}
		\left(
		e^{\eta_4t}
		\|\sqrt\rho \dot u \|_{L^2}^2
		\right)^{1/4}
		\left(
		e^{\eta_4t}
		\|\nabla\dot u \|_{L^2}^2
		\right)^{1/4}
		e^{-\eta_4t/2}\,dt
		\notag\\
		&\leq C
		\left(
		\sup_{t_1\leq t\leq t_2}
		e^{\eta_4t}
		\|\sqrt\rho \dot u(t)\|_{L^2}^2
		\right)^{1/4}
		\int_{t_1}^{t_2}
		\left(
		e^{\eta_4t}
		\|\nabla\dot u \|_{L^2}^2
		\right)^{1/4}
		e^{-\eta_4t/2}\,dt\notag\\
 &\leq C
 \left(
 \sup_{t_1\leq t\leq t_2}
 e^{\eta_4t}
 \|\sqrt\rho \dot u(t)\|_{L^2}^2
 \right)^{1/4}
		\left(
		\int_{t_1}^{t_2}
		e^{\eta_4t}
		\|\nabla\dot u \|_{L^2}^2\,dt
		\right)^{1/4}
		\left(
		\int_{t_1}^{t_2}
		e^{-2\eta_4t/3}\,dt
		\right)^{3/4}
		\notag\\[2mm] 
		&\leq
			C D(0)^{1/2}
		e^{-\eta_4t_1/2}
	\end{align*}
and
	\begin{align*}
	 \int_{t_1}^{t_2}
		\|\sqrt\rho  \dot u \|_{L^2}^{1/2}
		\|\nabla u \|_{L^2}\,dt \leq
		C D(0)^{3/4}
		\int_{t_1}^{t_2}
		e^{-3\eta_4t/4}\,dt
		 \leq
		C D(0)^{3/4}
		e^{-3\eta_4t_1/4}.
	\end{align*}
Hence
	\begin{align}\label{density-large-F-final}
		\int_{t_1}^{t_2}
		\|F \|_{L^\infty}\,dt
		\leq
		C
		\left[
		D(0)^{1/2}e^{-\eta_4t_1/2}
		+
		D(0)^{3/4}e^{-3\eta_4t_1/4}
		\right].
	\end{align}

	As in \eqref{b-short-split}, we obtain
	\begin{align}\label{xiao2}
	 b(t_2)-b(t_1)
		\leq
		C\rho_*
		\left[
		D(0)^{1/2}e^{-\eta_4t_1/2}
		+
		D(0)^{3/4}e^{-3\eta_4t_1/4}
		+
		D(0)
		\right]\leq C\rho_*D(0)^{1/2}\leq \frac14\rho_*,
	\end{align}
	where the smallness of $\varepsilon$ in \eqref{cp1} has been used.
	Thus, on $[1,T]$, applying Lemma \ref{zo} with
	$N_0=\frac14\rho_*$, $N_1=0$, and $\bar\zeta=\rho_*$, we have
	\begin{align*}
	y(t)
	\leq
	\max\{y(1),\rho_*\}
	+
	\frac14\rho_*
	\leq
	\frac74\rho_*,
\end{align*}
which gives
\begin{align*}
	\sup_{1\leq t\leq T}
	\|\rho(t)\|_{L^\infty}
	\leq
	\frac74\rho_*. \tag*{\qedhere}
\end{align*}
\end{proof}

\section{A \textit{priori} estimates (II): higher-order estimates}\label{se4}


Before proceeding to the higher-order estimates, we summarize the
lower-order bounds obtained in Section \ref{se3}.
\begin{corollary}\label{wen}
	Assume that  \eqref{sca1} holds. Then there
	exist constants $\eta>0$ and $C$, such that
	\begin{align}\label{wen1}
	 \sup_{0\leq t\leq T}
		\left(
		\|\nabla u(t)\|_{L^2}^2
		+
		\|(P-\bar P)(t)\|_{L^2}^2
		\right)
		+
		\int_0^T
		\left(
		\|\nabla u \|_{L^2}^2
		+
		\|  P-\bar P \|_{L^2}^2
		+
		\|\sqrt\rho\dot u \|_{L^2}^2
		\right)\,dt
		\leq
		C,
	\end{align}
	and, if $T\geq1$,
	\begin{align}\label{wen2}
	\sup_{1\leq t\leq T}
		\left[ e^{\eta t}
		\left(
		\|\sqrt\rho \dot u(t)\|_{L^2}^2
		+
		\|\nabla u(t)\|_{L^2}^2
		\right)\right]
		+
		\int_1^T
		e^{\eta t}
		\left(
		\|\sqrt\rho \dot u \|_{L^2}^2
		+
		\|\nabla\dot u \|_{L^2}^2
		\right)\,dt
		\leq
		C.
	\end{align}
	Here $\eta$  be chosen as
	$
	0<\eta<\min\{\eta_2,\eta_3\}$,
	where $\eta_2$ and $\eta_3$ are the constants appearing in
	\eqref{fir} and \eqref{w41}.
\end{corollary}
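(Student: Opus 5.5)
The plan is to run a continuity argument that turns the bootstrap Proposition \ref{pro} into unconditional bounds, and then to read \eqref{wen1}--\eqref{wen2} off Lemmas \ref{lem-basic}, \ref{lem2} and \ref{lemb}.

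\textbf{Step 1: the hypotheses \eqref{cp1} hold on the whole interval.} Fix $\varepsilon\triangleq\max\{c_0\varepsilon_0,\varepsilon_1/2\}$. Shrinking $\varepsilon_0$ if necessary, we may assume $c_0\varepsilon_0\le\varepsilon\le\varepsilon_1$ and $D(0)\le\varepsilon_0<\varepsilon$. Let $T^*$ be the supremum of the times $T'\le T$ for which \eqref{cp1} holds on $[0,T']$.

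At $t=0$ we have $\|\rho_0\|_{L^\infty}=\rho_*<2\rho_*$. Moreover $D(0)\le\varepsilon_0$ and the time integral vanishes, so the second quantity in \eqref{cp1} equals $D(0)<2\varepsilon$.

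For the strong solution of Definition \ref{def}, the maps
\begin{align*}
t\mapsto \|\rho(t)\|_{L^\infty},\qquad t\mapsto D(t),\qquad t\mapsto \sup_{0\le s\le t}\|\nabla u(s)\|_{L^2}^2\int_0^t\|\nabla u\|_{L^2}^2\,ds
\end{align*}
are continuous. For $\|\rho\|_{L^\infty}$ this follows from $\rho\in C([0,T];W^{1,q})$, after the usual upgrade from the $L^\infty(W^{1,q})$ regularity via the local theory. For $\|\nabla u\|_{L^2}$ it follows from $u\in C([0,T];H^1)$, which is standard since $u\in L^2(H^2)$ and $\sqrt\rho u_t\in L^2(L^2)$, together with the local theory.

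Hence $T^*>0$. On $[0,T^*]$, Proposition \ref{pro} gives the strictly better bounds \eqref{cp2}, namely $\tfrac74\rho_*$ and $\tfrac74\varepsilon$. By continuity, \eqref{cp1} then persists slightly beyond $T^*$ unless $T^*=T$. Therefore \eqref{cp1} holds on all of $[0,T]$.

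\textbf{Step 2: the estimate \eqref{wen1}.} Since \eqref{cp1} now holds on $[0,T]$, all lemmas of Section \ref{se3} apply. Adding \eqref{basic1} and \eqref{xxxx1} gives
\begin{align*}
\sup_{0\le t\le T}D(t)+\int_0^T\bigl(\|\nabla u\|_{L^2}^2+\|P-\bar P\|_{L^2}^2+\|\sqrt\rho\dot u\|_{L^2}^2\bigr)dt\le CD(0)\le C,
\end{align*}
which is \eqref{wen1}.

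\textbf{Step 3: the estimate \eqref{wen2}.} Let $T\ge1$ and choose $0<\eta<\min\{\eta_2,\eta_3\}$.
\begin{itemize}
\item From \eqref{fir}, $e^{\eta t}\|\nabla u(t)\|_{L^2}^2\le e^{\eta_2 t}D(t)\le C$, and $\int_1^Te^{\eta t}\|\sqrt\rho\dot u\|_{L^2}^2\,dt\le C$.
\item From \eqref{w41}, $e^{\eta t}\|\sqrt\rho\dot u(t)\|_{L^2}^2\le C$ for $t\ge1$, and $\int_1^Te^{\eta t}\|\nabla\dot u\|_{L^2}^2\,dt\le C$.
\end{itemize}
Together these give \eqref{wen2}. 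This is exactly the content of \eqref{d1}, with $\eta$ in place of $\eta_4$.

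\textbf{Main obstacle.} The only delicate point is the continuity in time needed to start and propagate the bootstrap. In particular one needs continuity of $\|\nabla u(t)\|_{L^2}$ up to $t=0$ without a compatibility condition, and of $\|\rho(t)\|_{L^\infty}$. I would obtain both from the regularity class in Proposition \ref{local}. The constants $C$ depend only on $\mu,\lambda,\gamma,\bar\rho,\rho_*,\Omega$, because they come directly from the cited lemmas.
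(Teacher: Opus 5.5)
Your Steps 2 and 3 are exactly how the corollary arises in the paper. Once \eqref{cp1} holds on $[0,T]$, \eqref{wen1} is \eqref{basic1} plus \eqref{xxxx1}, and \eqref{wen2} is \eqref{fir} together with \eqref{w41}, i.e.\ \eqref{d1}. Your Step 1 is the continuity argument the paper runs in Section 5.

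The gap is in how you start that argument at $t=0$. The claim $u\in C([0,T];H^1)$ is not standard here, and it is false in general. The pair $u\in L^2(0,T;H^2)$, $\sqrt\rho u_t\in L^2(0,T;L^2)$ gives no control of $u_t$ on the vacuum set, so no Lions--Magenes argument is available. Suppose $\{\rho_0=0\}$ contains a ball $B$. Vacuum is transported by the flow, so for small $t>0$ one has $\rho=0$ on any fixed compact $K\subset B$. Hence $\rho\dot u=0$ and $\nabla P=0$ there, and the momentum equation forces $\mu\Delta u+(\mu+\lambda)\nabla\divv u=0$ in the interior of $K$. This linear condition passes to weak $H^1$ limits. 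Now \eqref{sou} allows $u_0\in H^1_b$ that is not Lam\'e-harmonic in $B$, since no compatibility condition \eqref{xiang} is imposed. For such $u_0$ one gets $u(t)\not\rightharpoonup u_0$ in $H^1$ as $t\to0^+$. This is consistent with \eqref{1.2} prescribing only $\rho u$ at $t=0$, and with the paper claiming only $u\in C([\tau,T];H^1)$ for $\tau>0$ in \eqref{ccon1}.

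What the continuity argument actually needs at $t=0$ is a one-sided bound $\limsup_{t\to0^+}\|\nabla u(t)\|_{L^2}^2\le C_*D(0)$, with $C_*$ depending only on the fixed parameters. Continuity for $t>0$ is supplied by \eqref{ccon1}, and $t\mapsto\|\rho(t)\|_{L^\infty}$ and $t\mapsto\|P-\bar P\|_{L^2}$ are continuous on $[0,T]$, as you say. To get the bound:
\begin{itemize}
\item Integrate \eqref{w30} over $(0,t)$ for the nonvacuum approximate solutions from which the local solution is constructed (initial density $\rho_0+\delta$, same $u_0$). For these, $\Psi$ is continuous at $t=0$, and the integrated right-hand side tends to $0$ as $t\to0$ uniformly in $\delta$ by the uniform local bounds.
\item Let $\delta\to0$, using weak lower semicontinuity of the quadratic part of $\Psi$, strong convergence of the pressure, and uniqueness.
\item Conclude with $\Psi\ge c\|\nabla u\|_{L^2}^2-C\|P-\bar P\|_{L^2}^2$ and $\Psi(0)\le CD(0)$.
\end{itemize}
Then shrink $\varepsilon_0$ so that $C_*\varepsilon_0<\varepsilon_1\le2\varepsilon$ for your choice of $\varepsilon$, and the rest of your argument goes through unchanged. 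The paper is equally terse at this point (``by the time continuity of the local strong solution''). So this is a repair of a shared technical step, not a flaw in your overall strategy.
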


We next establish the higher-order estimates needed to propagate the
regularity of the density and to recover the spatial regularity of the
velocity for positive time.
\begin{lemma}\label{lem-high-order}
	Assume that \eqref{sca1} holds. For  $q\in(3,6)$, there
	exists a positive constant $C$ depending on $T$ such that
	\begin{gather}\label{higher-1}
		\sup_{0\leq t\leq T}
		\left(
		t\|\nabla^2u(t)\|_{L^2}^2
		\right)
		+
		\int_0^T
		\left( t\|\nabla u_t \|_{L^2}^2+t\|\nabla^{2} u \|_{L^{q}}^2\right) \,dt
		\leq C,\\
\label{higher-2}
	 \sup_{0\leq t\leq T}
		\left(
		\|\nabla\rho(t)\|_{ L^q}
		+\|\rho_t(t)\|_{L^2}^2
		\right)
		 +
		\int_0^T
		\left(
		\|\nabla^2u \|_{ L^q}
		+
		\|\nabla u \|_{L^\infty}
		\right)dt
		\leq C.
	\end{gather}
\end{lemma}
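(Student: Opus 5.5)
We first derive \eqref{higher-1} from the lower-order bounds of Corollary \ref{wen} together with \eqref{mB1}. The main device is the Lam\'e-type elliptic estimate for \eqref{a1}$_2$ under the slip condition \eqref{a4}. Since $\na u$ is controlled through $\divv u$ and $\curl u$ by Lemma \ref{lm24}, we apply \eqref{ff8}, \eqref{f8} and \eqref{f6} to $\na^2u$, written in terms of $\na F$, $\na\curl u$ and $\na P$. This gives, for $p\in[2,6]$,
\begin{align*}
\|\na^2u\|_{L^p}\le C\big(\|\rho\dot u\|_{L^p}+\|\na P\|_{L^p}+\|\na u\|_{L^2}\big).
\end{align*}
With $p=2$, the bound $\sup_t\sigma\|\sqrt\rho\dot u\|_{L^2}^2\le C$ from \eqref{mB1} gives $t\|\na^2u\|_{L^2}^2\le C(1+\|\na P\|_{L^2}^2)$. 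We will therefore prove \eqref{higher-1} and \eqref{higher-2} simultaneously, with $\|\na\rho\|_{L^q}$ still to be controlled.

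For $t\|\na u_t\|_{L^2}^2$ we write $u_t=\dot u-u\cdot\na u$. Then
\begin{align*}
\|\na u_t\|_{L^2}\le\|\na\dot u\|_{L^2}+C\|u\|_{L^\infty}\|\na^2u\|_{L^2}+C\|\na u\|_{L^4}^2,
\end{align*}
and all three terms are time-integrable with weight $t$ by \eqref{mB1} and \eqref{f12}. For $\|\na^2u\|_{L^q}$ we use $\|\rho\dot u\|_{L^q}\le C\|\dot u\|_{L^2}^{(6-q)/(2q)}\|\dot u\|_{L^6}^{(3q-6)/(2q)}$ together with \eqref{d11}. Since $(3q-6)/(2q)<1$, the weighted integral $\int_0^T t\|\rho\dot u\|_{L^q}^2\,dt$ is bounded by $B_1(T)$ and \eqref{wen1}. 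The same interpolation with weight $t^{1/2}$ shows that
\begin{align*}
\int_0^T\|\rho\dot u\|_{L^q}\,dt\le C.
\end{align*}
This is \eqref{rho-dotu-L1q}: the singularity at $t=0$ is at worst $t^{-(3q-6)/(4q)}$, which is integrable. For $t\geq1$ the exponential decay \eqref{wen2} handles the tail.

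The density estimate is the core step. Applying $\na$ to \eqref{a1}$_1$ and testing with $|\na\rho|^{q-2}\na\rho$ gives
\begin{align*}
\frac{d}{dt}\|\na\rho\|_{L^q}\le C\|\na u\|_{L^\infty}\|\na\rho\|_{L^q}+C\|\na^2u\|_{L^q},
\end{align*}
and the same bound holds for $\na P$. In the elliptic estimate we use $\|\na^2u\|_{L^q}\le C(\|\rho\dot u\|_{L^q}+\|\na P\|_{L^q}+\|\na u\|_{L^2})$. To control $\|\na u\|_{L^\infty}$ we use the BKM inequality \eqref{wuqiong}, where $\|\divv u\|_{L^\infty}+\|\curl u\|_{L^\infty}\le C(\|F\|_{L^\infty}+\|\curl u\|_{L^\infty}+1)$. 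Both $F$ and $\curl u$ are bounded in $L^\infty$ by $C(1+\|\rho\dot u\|_{L^q})$ via \eqref{ff8}, \eqref{f8} and the Gagliardo--Nirenberg inequality of Lemma \ref{l21}. Writing $f(t)=e+\|\na\rho\|_{L^q}$, we arrive at
\begin{align*}
f'\le C\big(1+\|\rho\dot u\|_{L^q}\big)\,f\ln f .
\end{align*}
We expect this step to be the main obstacle. The log must be arranged correctly: inside the logarithm, $\ln(e+\|\na^2u\|_{L^q})$ is bounded by $C\ln(e+\|\rho\dot u\|_{L^q})+C\ln f$, and the cross term $(1+\|\rho\dot u\|_{L^q})\ln(e+\|\rho\dot u\|_{L^q})$ must also be in $L^1(0,T)$. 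We handle it by bounding it by $C(1+\|\rho\dot u\|_{L^q}^{1+\delta})$ for a small $\delta$ with $(1+\delta)(3q-6)/(4q)<1$, which preserves integrability near $t=0$. Gronwall applied to $\ln f$ then yields $\sup_t\|\na\rho\|_{L^q}\le C(T)$. This in turn gives $\int_0^T(\|\na^2u\|_{L^q}+\|\na u\|_{L^\infty})\,dt\le C$ and completes \eqref{higher-1}.

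Finally, $\rho_t=-u\cdot\na\rho-\rho\divv u$ satisfies $\|\rho_t\|_{L^2}\le C\|u\|_{L^6}\|\na\rho\|_{L^3}+C\|\na u\|_{L^2}$, which is bounded since $q>3$. This yields the remaining part of \eqref{higher-2}.
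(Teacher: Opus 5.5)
Your proposal follows essentially the same route as the paper. You interpolate $\rho\dot u$ between $L^2$ and $L^6$ with \eqref{d11} to get $\rho\dot u\in L^1(0,T;L^q)$ and $\sqrt t\,\rho\dot u\in L^2(0,T;L^q)$. You then run the $L^q$ transport estimate for $\nabla\rho$ with the elliptic $W^{2,q}$ bound and the BKM inequality \eqref{wuqiong}, close by Gronwall on $\ln(e+\|\nabla\rho\|_{L^q})$, and read off $\nabla^2u$, $\rho_t$ and $u_t$.

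The only real variation is how you treat the logarithm. The paper keeps the sharper bound $\|F\|_{L^\infty}+\|\curl u\|_{L^\infty}\le C(1+\|\rho\dot u\|_{L^q}^{\alpha_q})$ with $\alpha_q=\frac{3q}{5q-6}<1$. Then $(1+\|\rho\dot u\|_{L^q}^{\alpha_q})\ln(e+\|\rho\dot u\|_{L^q})\le C(1+\|\rho\dot u\|_{L^q})$, and only $L^1$-in-time integrability of $\|\rho\dot u\|_{L^q}$ is needed. Your $L^{1+\delta}$ device also works, but your condition on $\delta$ is not the right one. The factors $\|\sqrt\rho\dot u\|_{L^2}^{1-\theta}$ and $(t\|\nabla\dot u\|_{L^2}^2)^{\theta/2}$ are themselves unbounded near $t=0$, so the extracted power $t^{-\theta/2}$ is not the whole singularity. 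H\"older against $\|\sqrt\rho\dot u\|_{L^2}^2\in L^1(0,1)$ and $t\|\nabla\dot u\|_{L^2}^2\in L^1(0,1)$ requires $(1+\delta)(1+\theta)<2$ with $\theta=\frac{3(q-2)}{2q}$. This is stronger than $(1+\delta)\frac{3q-6}{4q}<1$, though still satisfiable for small $\delta$.

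Two slips should also be fixed.
\begin{itemize}
\item The low-order factor in your interpolation must be $\|\rho\dot u\|_{L^2}\le C\|\sqrt\rho\dot u\|_{L^2}$, not $\|\dot u\|_{L^2}$. In the presence of vacuum, $\|\dot u\|_{L^2}$ is controlled only through $\|\nabla\dot u\|_{L^2}$ via \eqref{d11}. Then $\int_0^1\|\rho\dot u\|_{L^q}\,dt$ would not follow from $\int_0^1 t\|\nabla\dot u\|_{L^2}^2\,dt<\infty$.
\item The second-order elliptic bound cannot be obtained from \eqref{f6} itself, because $\partial_k u\cdot n\neq0$ on $\partial\Omega$. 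It needs the $W^{2,p}$ div--curl estimate, or the Lam\'e $W^{2,q}$ estimate that the paper cites.
\end{itemize}
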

\begin{proof}
We first prove that
	\begin{align}\label{rho-dotu-L1q}
		\int_0^T
		\|\rho\dot u \|_{L^q}\,dt
		\leq C.
	\end{align}
It follows from the interpolation inequality and \eqref{d11} that
	\begin{align}\label{0000}
		\|\rho\dot u\|_{L^q}
		 \leq
		C
		\|\rho\dot u\|_{L^2}^{1-\theta}
		\|\rho\dot u\|_{L^6}^{\theta}
	 \leq
		C
		\|\sqrt\rho \dot u\|_{L^2}^{1-\theta}
		\|\dot u\|_{L^6}^{\theta}
	 \leq
		C
		\|\sqrt\rho \dot u\|_{L^2}^{1-\theta}
		\|\nabla\dot u\|_{L^2}^{\theta}
		+
		C
		\|\sqrt\rho \dot u\|_{L^2}^{1-\theta}
		\|\nabla u\|_{L^2}^{2\theta},
	\end{align}
where $\theta\triangleq\frac{3(q-2)}{2q}\in\left( \frac12, 1\right)$.
We estimate the two terms on the right-hand side of \eqref{0000} separately.
	
If $T<1$, from \eqref{xxxx1}, \eqref{mB1}, and H\"older's inequality, one has that
	\begin{align*}
		&\int_0^{\sigma(T)}
		\|\sqrt\rho \dot u \|_{L^2}^{1-\theta}
		\|\nabla\dot u \|_{L^2}^{\theta}\,dt
		\nonumber\\
		&=
		\int_0^{\sigma(T)}
		\bigg(
		\|\sqrt\rho \dot u \|_{L^2}^2
		\bigg)^{\frac{1-\theta}{2}}
		\bigg(
		t\|\nabla\dot u \|_{L^2}^2
		\bigg)^{\frac{\theta}{2}}
		t^{-\frac{\theta}{2}}\,dt
		\nonumber\\
		&\leq C
		\bigg(
		\int_0^{\sigma(T)}
		\|\sqrt\rho \dot u \|_{L^2}^2\,dt
		\bigg)^{\frac{1-\theta}{2}}
		\bigg(
		\int_0^{\sigma(T)}
		t\|\nabla\dot u \|_{L^2}^2\,dt
		\bigg)^{\frac{\theta}{2}}
		\bigg(
		\int_0^{\sigma(T)}
		t^{-\theta}\,dt
		\bigg)^{\frac12}\leq C.
	\end{align*}	
	Similarly,
	\begin{align*}
		\int_0^{\sigma(T)}
		\|\sqrt\rho \dot u \|_{L^2}^{1-\theta}
		\|\nabla u \|_{L^2}^{2\theta}\,dt
		&\leq C
		\bigg(
		\sup_{0\leq t\leq T}
		\|\nabla u(t)\|_{L^2}^2
		\bigg)^\theta
		\int_0^{\sigma(T)}
		\|\sqrt\rho \dot u \|_{L^2}^{1-\theta}\,dt
		\nonumber\\
		&\leq
		C
		\bigg(
		\sup_{0\leq t\leq T}
		\|\nabla u(t)\|_{L^2}^2
		\bigg)^\theta
		\left(
		\int_0^T
		\|\sqrt\rho \dot u \|_{L^2}^2\,dt
		\right)^{\frac{1-\theta}{2}}\leq C.
	\end{align*}
If $T\geq1$, one deduces from \eqref{wen2} and H\"older's inequality that
	\begin{align*}
		&\int_1^T
		\|\sqrt\rho \dot u \|_{L^2}^{1-\theta}
		\|\nabla\dot u \|_{L^2}^{\theta}\,dt
		\nonumber\\
		&=
		\int_1^T
		\left(
		e^{\eta t}
		\|\sqrt\rho \dot u \|_{L^2}^2
		\right)^{\frac{1-\theta}{2}}
		\left(
		e^{\eta t}
		\|\nabla\dot u \|_{L^2}^2
		\right)^{\frac{\theta}{2}}
		e^{-\frac{\eta t}{2}}\,dt
		\nonumber\\
		&\leq
		C\left(
		\int_1^T
		e^{\eta t}
		\|\sqrt\rho \dot u \|_{L^2}^2\,dt
		\right)^{\frac{1-\theta}{2}}
		\left(
		\int_1^T
		e^{\eta t}
		\|\nabla\dot u \|_{L^2}^2\,dt
		\right)^{\frac{\theta}{2}}
		\left(
		\int_1^\infty
		e^{-\eta t}\,dt
		\right)^{1/2}\leq C.
	\end{align*}
Moreover, the pointwise exponential decay in \eqref{wen2} gives
\begin{align*}
	\int_1^T
	\|\sqrt\rho \dot u \|_{L^2}^{1-\theta}
	\|\nabla u \|_{L^2}^{2\theta}\,dt
	\leq
	C\int_1^T
	e^{-\frac{1+\theta}{2}\eta t}\,dt
	\leq C.
\end{align*}
From the above four estimates, one sees that \eqref{rho-dotu-L1q} holds.

We next propagate the $W^{1,q}$-regularity of the density. Differentiating \eqref{a1}$_{1}$ with respect to $x$ yields
	\begin{align}\label{gradient-rho-equation}
		\partial_t\nabla\rho
		+
		u\cdot\nabla(\nabla\rho)
		+
		(\nabla u)^{\mathrm T}\nabla\rho
		+
		(\divv u)\nabla\rho
		+
		\rho\nabla\divv u
		=
		0.
	\end{align}
	Testing \eqref{gradient-rho-equation} by
	$q|\nabla\rho|^{q-2}\nabla\rho$ and integrating the resultant over $\Omega$
	leads to
	\begin{align}\label{grad-rho-q-1}
		\frac{d}{dt}
		\|\nabla\rho(t)\|_{L^q}
		\leq
		C
		\|\nabla u\|_{L^\infty}
		\|\nabla\rho\|_{L^q}
		+
		C
		\|\nabla^2u\|_{L^q}.
	\end{align}	
	To estimate $\|\nabla^2u\|_{L^q}$, we rewrite the momentum equation  \eqref{a1}$_{2}$ as
	\begin{align*}
		-\mu\Delta u
		-
		(\lambda+\mu)\nabla\divv u
		=
		-\rho\dot u-\nabla P.
	\end{align*}
By $W^{2, q}$-estimate of the Lam\'{e} system (see \cite[Lemma 3.1]{HWW12}), we obtain that
	\begin{align}
		\|\nabla^2u\|_{L^q}
		\leq
		C
		\left(
		\|\rho\dot u\|_{L^q}
		+
		\|\nabla P\|_{L^q}
		+
		\|u\|_{L^q}
		\right)
		\leq
		C
		\left(
		\|\rho\dot u\|_{L^q}
		+
		\|\nabla\rho\|_{L^q}
		+
		\|\nabla u\|_{L^2}
		\right).
		\label{H}
	\end{align}
One needs to control $\|\nabla u\|_{L^\infty}$ in \eqref{grad-rho-q-1}.
Since $\bar F=0$, one infers from Gagliardo--Nirenberg inequality,
Poincar\'e's inequality, and \eqref{ff8} that
	\begin{align}\label{F}
		\|F\|_{L^\infty}
	 \leq
		C
		\|F\|_{L^2}^{1-\alpha_q}
		\|\nabla F\|_{L^q}^{\alpha_q}
		 \leq
		C
		\|\rho\dot u\|_{L^q}^{\alpha_q}
	\end{align}
with $\alpha_q=\frac{3q}{5q-6}\in(0,1)$.
Owing to $(\operatorname{curl}u\times n)\big|_{\partial\Omega}=0$,
	Gagliardo--Nirenberg inequality together with \eqref{f8} gives
	\begin{align}\label{W}
		\|\operatorname{curl}u\|_{L^\infty}
		\leq
		C
		\left(
		1+
		\|\rho\dot u\|_{L^q}^{\alpha_q}
		\right).
	\end{align}
	Combining \eqref{H}--\eqref{W} with Lemma
	\ref{lem-bkm}, one finds that
	\begin{align}\label{BKM}
		\|\nabla u\|_{L^\infty}
		\leq
		C
		\left(
		1+
		\|\rho\dot u\|_{L^q}^{\alpha_q}
		\right)
		\ln
		\left(
		e+
		\|\nabla\rho\|_{L^q}
		+
		\|\rho\dot u\|_{L^q}
		\right)
		+
		C\leq
		C
		\left(
		1+\|\rho\dot u\|_{L^q}
		\right)
		\ln
		\left(
		e+\|\nabla\rho\|_{L^q}
		\right).
	\end{align}	
	Substituting \eqref{H} and \eqref{BKM} into
	\eqref{grad-rho-q-1} implies
	\begin{align*}
		\frac{d}{dt}
		\ln
		\left(
		e+\|\nabla\rho(t)\|_{L^q}
		\right)
		\leq
		C
		\left(
		1+\|\rho\dot u\|_{L^q}
		\right)
		\ln
		\left(
		e+\|\nabla\rho\|_{L^q}
		\right),
	\end{align*}
which along with Gronwall's inequality and \eqref{rho-dotu-L1q} shows that
	\begin{align}\label{grad-rho-q-final}
		\sup_{0\leq t\leq T}
		\|\nabla\rho(t)\|_{L^q}
		\leq C(T).
	\end{align}
	Inserting \eqref{grad-rho-q-final} into \eqref{BKM} and using
	\eqref{rho-dotu-L1q}, we further deduce
	\begin{align}\label{grad-u-infty-integral}
		\int_0^T
		\|\nabla u \|_{L^\infty}\,dt
		\leq C.
	\end{align}

We can now recover the spatial regularity of $u$. From \eqref{wen1}, \eqref{rho-dotu-L1q}, \eqref{H}, and
	\eqref{grad-rho-q-final}, we obtain
	\begin{align}\label{grad2-u-q-integral}
		\int_0^T
	  \|\nabla^2u \|_{L^q}dt
		\leq C.
	\end{align}
The continuity equation \eqref{a1}$_{1}$ gives
	\begin{align*}
		\|\rho_t\|_{L^2}
		&\leq
		\|\nabla u\|_{L^2}
		\|\nabla\rho\|_{L^3}
		+
		C\|\nabla u\|_{L^2}.
	\end{align*}
It follows from the interpolation inequality and 
\eqref{grad-rho-q-final} that
\begin{align}\label{rho-t-final}
	\sup_{0\leq t\leq T}
	\|\nabla\rho(t)\|_{L^3}
	\leq C\quad \text{and} \quad
		\sup_{0\leq t\leq T}
		\|\rho_t(t)\|_{L^2}
		\leq C.
	\end{align}
Moreover, \eqref{mB1}, \eqref{wen1}, \eqref{higher-2},
and \eqref{rho-t-final} imply that
\begin{align}\label{zui1}
	\sup_{0\leq t\leq T}
	\big(t\|\nabla^2u(t)\|_{L^2}^2\big)
	\leq C.
\end{align}	
	By
	$
	u_t
	=
	\dot u-u\cdot\nabla u$ and \eqref{f12},
	we have
	\begin{align*}
		\int_0^T
		t\|\nabla u_t \|_{L^2}^2\,dt
		\leq
		C
		\int_0^T
		t\|\nabla\dot u \|_{L^2}^2\,dt
	 +
		C
		\int_0^T
		t
		\left(
		\|\nabla^2u \|_{L^2}^3
		+
		1
		\right)dt.
	\end{align*}
	The first term can be bounded by \eqref{mB1}
	due to $t\leq C(T)\sigma(t)$. For the cubic term,
	\eqref{grad2-u-q-integral} and \eqref{zui1} yield
	\begin{align*}
		\int_0^T
		t\|\nabla^2u \|_{L^2}^3\,dt
		\leq
		\sup_{0\leq t\leq T}
		\big(\sqrt{t}\|\nabla^2u(t)\|_{L^2}
		\big)
		\int_0^T
		\sqrt{t}\|\nabla^2u \|_{L^2}^2\,dt
		\leq
		C
		\int_0^T
		\|\nabla^2u \|_{L^2}^2\,dt
		\leq C.
	\end{align*}
	Therefore,
	\begin{align}\label{ut-gradient-final}
		\int_0^T
		t\|\nabla u_t \|_{L^2}^2\,dt
		\leq C.
	\end{align}
	
Finally, we establish the time-weighted $W^{2,q}$-estimate for the velocity. Multiplying \eqref{0000} by $\sigma$ and integrating the resultant over $[0,T]$, we obtain
	\begin{align*}
		\int_0^T
		\sigma
		\|\rho\dot u \|_{L^q}^2\,dt&\leq C\int_0^T
		\sigma
		\|\sqrt\rho \dot u \|_{L^2}^{2(1-\theta)}
		\|\nabla\dot u \|_{L^2}^{2\theta}\,dt+C\int_0^T
		\sigma
		\|\sqrt\rho \dot u \|_{L^2}^{2(1-\theta)}
		\|\nabla u \|_{L^2}^{4\theta}\,dt\\
		&\leq
		C
		\bigg(
		\sup_{0\leq t\leq T}
		\sigma \|\sqrt\rho \dot u(t)\|_{L^2}^2
		\bigg)^{1-\theta}
		T^{1-\theta}
		\left(
		\int_0^T
		\sigma\|\nabla\dot u \|_{L^2}^2\,dt
		\right)^\theta\\
		&\quad+C
		\bigg(
		\sup_{0\leq t\leq T}
		\sigma \|\sqrt\rho \dot u(t)\|_{L^2}^2
		\bigg)^{1-\theta}
		\bigg(
		\sup_{0\leq t\leq T}
		\|\nabla u(t)\|_{L^2}^2
		\bigg)^{2\theta}
		\int_0^T\sigma^\theta\,dt\leq C,
	\end{align*}
which together with \eqref{wen1} and \eqref{grad-rho-q-final} leads to
	\begin{align*}
		\int_0^T
		t\|\nabla^{2} u \|_{L^{q}}^2\,dt
		\leq
		C\int_0^T
		t
		\left(
		\|\rho\dot u \|_{L^q}^2
		+
		\|\nabla\rho \|_{L^q}^2
		+
		\|\nabla u \|_{L^2}^2
		\right)dt
		\leq
		C. \tag*{\qedhere}
	\end{align*} 
\end{proof}

\section{Proof of Theorem \ref{th1}}\label{se5}

With the \textit{a priori} estimates obtained in Sections \ref{se3} and \ref{se4}, we now complete the proof of Theorem \ref{th1}.

\begin{proof}[Proof of Theorem \ref{th1}]
	By Proposition \ref{local}, there exists $T_*>0$ such that
	\eqref{a1}--\eqref{a4} admits a unique local strong solution
	$(\rho,u)$ on $\Omega\times(0,T_*]$. The assumptions on the initial data ensure that the bootstrap
	conditions \eqref{cp1} are satisfied at $t=0$. By the time
	continuity of the local strong solution, there exists
	$T_1\in(0,T_*]$ such that \eqref{cp1} remains valid on
	$[0,T_1]$.
	
	Define
	\begin{align*}
		T^*
		\triangleq
		\sup
		\Big\{
		T>0:\,
		(\rho,u)\ \text{is a strong solution to }
		\eqref{a1}-\eqref{a4}
		\text{ on }\Omega\times[0,T]~
	\text{and satisfies }\eqref{cp1}\text{ on }[0,T]
		\Big\}.
	\end{align*}
	Clearly, $T^*\geq T_1>0$. We claim that $T^*=\infty$. To this end, suppose otherwise that
	$T^*<\infty$. For any $0<\tau<T<T^*$, \eqref{wen1} and \eqref{higher-1}  yield that
	\begin{align}\label{ccon1}
		u\in C\big([\tau,T];H^1\big).
	\end{align}
	Moreover, by \eqref{cp1}, \eqref{basic1}, \eqref{xxxx1}, \eqref{higher-1}, \eqref{higher-2}, and Sobolev's inequality, we
also obtain
\begin{align}\label{ccon2}
	\rho u\in C\left([0,T];L^{2}\right).
\end{align}
	In addition, the classical continuity result
	\cite[Chapter 3, Lemma 1.4]{Tem24} together with \eqref{wen1}--\eqref{higher-2} implies
	\begin{align}\label{on2}
		\rho
		\in
		C\big([0,T];W^{1,q}\big).
	\end{align}
	
	We now return to the maximal time $T^*$.
	Passing to the limit as $t\rightarrow T^{*}$ and using
	\eqref{ccon1}--\eqref{on2}, we conclude that
	\begin{align*}
		(\rho,u)(T^*)
		\triangleq
		\lim_{t\rightarrow T^*}(\rho,u)(t)
	\end{align*}
	exists  and satisfies
	the hypotheses required by Proposition \ref{local}.
	Taking $(\rho,u)(T^*)$ as the new initial data, Proposition
	\ref{local} extends the strong solution to
	$[0,T^*+\delta]$ for some $\delta>0$.
	This contradicts the definition of $T^*$.
	Therefore $T^*=\infty$, and the local strong solution extends
	globally in time.
	Finally, with \eqref{w1}, \eqref{fir}, and \eqref{w41}  at hand, \eqref{decay} holds. \eqref{blow} can be obtained by similar arguments as those in \cite{CL23}, and we omit the details for simplicity.
\end{proof}

\section*{Conflict of interest}
The authors have no conflicts to disclose.

\section*{Data availability}
No data was used for the research described in the article.

\end{document}